    \theoremstyle{nonumberplain}
    \newtheorem{proof}{Proof}
\newtheorem{theorem}{Theorem}[section]
\newtheorem{lemma}{Lemma}[section]
\begin{document}
\newcommand{\topcaption}{%
\setlength{\abovecaptionskip}{0.cm}%
\setlength{\belowcaptionskip}{0.cm}%
\caption}

\title{\bf Local multiscale model reduction using discontinuous Galerkin coupling for elasticity problems } 
\date{}
\author{\sffamily Zhongqian Wang$^1$, Shubin Fu$^{2,*}$, Eric Chung$^1$\\
    {\sffamily\small $^1$ Department of Mathematics, The Chinese University of Hong Kong, Hong Kong SAR }\\
    {\sffamily\small $^2$ Department of Mathematics, University of Wisconsin - Madison, USA }}
\renewcommand{\thefootnote}{\fnsymbol{footnote}}
\footnotetext[1]{Corresponding author. }
\maketitle

{\noindent\small{\bf Abstract:}
 	 In this paper, we consider the  constraint energy minimizing generalized multiscale finite element method (CEM-GMsFEM) with discontinuous Galerkin (DG) coupling for the linear elasticity equations in highly heterogeneous and high contrast media. We will introduce the construction of a DG version of the CEM-GMsFEM, such as auxiliary basis functions and offline basis functions. 
	 The DG version of the method offers some advantages such as flexibility in coarse grid construction and sparsity of resulting discrete systems. 
	 Moreover, to our best knowledge, this is the first time where the proof of the convergence of the CEM-GMsFEM in the DG form is given. 
	 Some numerical examples will be presented to illustrate the performance of the method. 
	 }

\vspace{1ex}
{\noindent\small{\bf Keywords:}
    Multiscale finite element method; CEM-GMsFEM; discontinuous Galerkin; elasticity problem}
\section{Introduction}
Many problems in science and engineering exhibit high heterogeneity.  One example is the seismic meta material for which the Young's modulus of Foamed plate is about $1.6\times 10^5$Pa, and it  is about $2.07\times 10^{11}$Pa for steel \cite{zeng2019broadband}. In order to solve problems in highly heterogeneous media, one usually needs to  discretize the governing equation with  high resolution mesh in order to obtain reliable solutions, and this will inevitably results in large-scale systems which are hard and sometime even impossible to solve with current computing resources. Therefore, some model reduction techniques  are necessary.  There are some existing model reduction techniques for handling highly heterogeneous media or material. One way is to homogenize the detailed heterogeneous media based on some rules so that one can  solve the equation with low resolution grid \cite{oleinik2009mathematical,DDDAS_upscale_2004,Arbogast_Boyd_06,chung2001asymptotic}. Another way is to solve the equation  in a coarse grid using multiscale basis functions. These functions are usually constructed by solving some carefully designed local problems, and they include heterogeneous information of the media \cite{egw10,efendiev2009multiscale,Arbogast_two_scale_04}.
 
There are in literature various multiscale model reduction techniques based on different types of multiscale methods for highly heterogeneous problems. Some examples include the multiscale finite element method \cite{hou1997multiscale,chen2003mixed}, variational multiscale method \cite{hughes1998variational,hughes1998variational}, multiscale mortar methods \cite{arbogast2007multiscale,wheeler2012multiscale}, localized orthogonal decomposition method\cite{maalqvist2014localization}, multiscale finite volume method \cite{cortinovis2014iterative,lunati2004multi,jenny2003multi}. Among these methods, the multiscale finite element method (MsFEM) and its generalization the Generalized Multiscale Finite element method (GMsFEM) have achieved success for various types of heterogeneous problems \cite{efendiev2009multiscale, vasilyeva2019constrained, wang2021comparison}. The basic idea of both the MsFEM and the GMsFEM is to solve the equation in a relative low resolution grid with  appropriately constructed multiscale basis functions  instead of polynomial basis functions as in traditional finite element methods. These basis functions are constrained forms trusted by solving some  local heterogeneous problems with specific boundary conditions and they include small scale medium information. Thus these basis functions allow us to perform  accurate but  inexpensive coarse-grid simulations.
Motivated by the localized orthogonal decomposition method (LOD) \cite{maalqvist2014localization,henning2014localized,engwer2019efficient,maalqvist2020numerical} 
the constraint energy minimizing generalized multiscale finite element method (CEM-GMsFEM) \cite{chung2018constraint2} is developed. 
The CEM-GMsFEM basis functions are constructed based on local spectral problems and an energy minimization principle. The resulting basis functions have exponential decay
property for high contrast media. Moreover, the convergence depends only on the coarse mesh size and is independent of the scale and contrast of the media.
The method has shown success  for many problems, for example \cite{cheung2020iterative,vasilyeva2019constrained}. The original CEM-GMsFEM is based on continuous Galerkin coupling. It is extended to the discontinuous Galerkin (DG) formulation in \cite{cheung2021explicit}. The DG coupling allows the basis functions to be discontinuous and thus gives more flexibility for complex applications. 

In this paper, we consider the CEM-GMsFEM  with discontinuous Galerkin (DG) coupling for  the linear elasticity equations in highly heterogeneous media. We use the classical  internal penalty discontinuous Galerkin (IPDG) method as the coarse grid scheme to couple the multiscale basis functions. The construction of multiscale basis functions consists of two major steps. In particular, we will construct auxiliary basis functions by solving local spectral problems on each coarse element, which is followed by  solving  energy minimization problems in oversampled domains. This will  yield the final multiscale basis functions. We propose two variants of energy minimization problems. The first one is based on solving constraint energy minimization problems while the second one is the relax version by solving unconstrained energy minimization problems \cite{fu2020elastic,fu2020constraint}. In comparison with previous convergence methods, the form of the CEM-GMsFEM under DG coupling proposed in this paper provides better convergence for elastic problems. By choosing an appropriate number of oversampling and basis functions, an accurate approximation can be achieved even with very large contrasts. 
Rigorous analysis and numerical examples for both versions will be provided.

This paper will be organized as follows. In Section \ref{section2}, we present the isotropic elasticity problem in heterogeneous media and the IPDG method. 
In Section \ref{section3}, CEM-GMsFEM is presented. In Section \ref{section4}, we provide some stability and convergence results. In Section \ref{section5}, we give some numerical results to illustrate the performance of this method. Finally, we conclude the paper in section \ref{section6}.

\section{Preliminaries}\label{section2}
In this section, we present the isotropic elasticity problem in heterogeneous media and the DG method. 
We will also introduce the mesh required for our method, and some norms for our analysis. 

\subsection{Isotropic elasticity problem in heterogeneous media }
In this paper, we consider the following elastic problem in the domain $\Omega \subset \mathbb{R}^2$,
\begin{subequations}
\begin{align}
\mathrm{div}\left ( \sigma\left( u\right)\right )+ f&=0,\label{1a} \\
\sigma &=C: \varepsilon,\label{1b} \\
\varepsilon &=\frac{1}{2}\left[\mathrm{grad}\ u+(\mathrm{grad}\ u)^{\mathrm{T}}\right],\label{1c}
\end{align}
\end{subequations}
 where $u$ is the displacement field, and 
 $f $ is the given source term. 
In the above system, $\sigma = \sigma \left ( u \right )$ represents the stress tensor, $ \epsilon =\epsilon \left ( u \right )$ represents the strain tensor, $C=C_{i,j,p,q}\left ( x \right ) $ is the fourth-order tensor
representing material properties where $i,j,p,q=1$  or $3$ for two-dimensional space. 
Using Voigt notation, the elastic tensor can be expressed in terms of the following coefficient matrix 
\begin{equation}
     C=\begin{bmatrix}
C_{11} & C_{13} & 0\\ 
 C_{31}&  C_{33}& 0\\ 
 0& 0 & C_{55}
\end{bmatrix}.
\end{equation}
 In addition, for isotropic elastomer, its constitutive equation is {$ \sigma (u)=\lambda tr\left ( \epsilon (u) \right )I+2\mu \epsilon (u) $}, and $\lambda$ and $\mu$ are Lame constants satisfying $\lambda +\mu >0,\mu >0$. 
So, we have $C_{11}=\lambda +2\mu ,$  $ C_{33}=\lambda +2\mu , $  $C_{13}=\lambda ,$  $ C_{55}=\mu .$
We recall that,
in (\ref{1c}), $\mathrm{grad}\, u=\left ( \frac{\partial u_{i}}{\partial x_{j}} \right )_{1\leq i,j\leq 2}$, and 
\begin{equation}
    \epsilon _{ij}\left ( u \right )=\frac{1}{2}\left ( \frac{\partial u_{i}}{\partial x_{j}}+\frac{\partial u_{j}}{\partial x_{i}}\right), 1\leq i,j\leq 2.
\end{equation}

\subsection{Interior Penalty Discontinuous Galerkin (IPDG) method}

We first introduce some concepts of grids as illustrated in \autoref{grid}. 
Let $\mathcal{T}^{H}$ be a coarse grid for the domain $\Omega$. An element $K$ in $\mathcal{T}^{H}$  is called a coarse element, where $K$ is a triangle or quadrilateral in the two-dimensional case.
We call $H$ the size of the coarse grid, and we denote the diameter of the cell $K$ as $h_{K}$. 
We let $N_{c}$ be the total number of coarse nodes, and $N$ be the number of coarse elements. We also denote the collection of all coarse grid edges by $\mathcal{E}^{H} $.
For any real number $s$, the discrete Sobolev space $H^{s}\left ( \mathcal{T}^{H} \right )$ is defined as follows:
$$H^{s}\left ( \mathcal{T} ^{H} \right )=\left \{ v\in L^{2} \left ( \Omega  \right ):\forall K\in \mathcal{T}^{H},v\mid_{K}\in H^{s}\left ( K \right ) \right \}.$$
The corresponding discrete Sobolev norm is defined as follows:
\begin{equation}
\left \| v \right \|_{s,\mathcal{T} ^{H}}=\left (\sum_{K\in \mathcal{T}^{H}}\left \| v \right \|_{s,K}^{2}  \right)^{1/2},
\end{equation}
where $\|\cdot\|_{s,K}$ is the norm on $H^s(K)$.
Besides, we denote the seminorm of the discrete gradient as
\begin{equation}
\left \| \mathrm{grad}\ v \right \|_{s,\mathcal{T} ^{H}}=\left (\sum_{K\in \mathcal{T} ^{H}}\left \| \mathrm{grad}\  v \right \|_{s,K}^{2}  \right)^{1/2}.
\end{equation}
In addition, we define the fine grid as $\mathcal{T}^{h}$, and $h> 0$ is the size of the fine grid. 
We note that the fine grid is a refinement of the coarse grid $\mathcal{T}^H$.
In \autoref{grid}, we illustrate an example of coarse and fine grids when rectangular partitions are adopted.
In the rest of the paper, we assume this type of rectangular partition is used. 

\begin{figure}[H] 
\centering 
\includegraphics[width=0.7\textwidth]{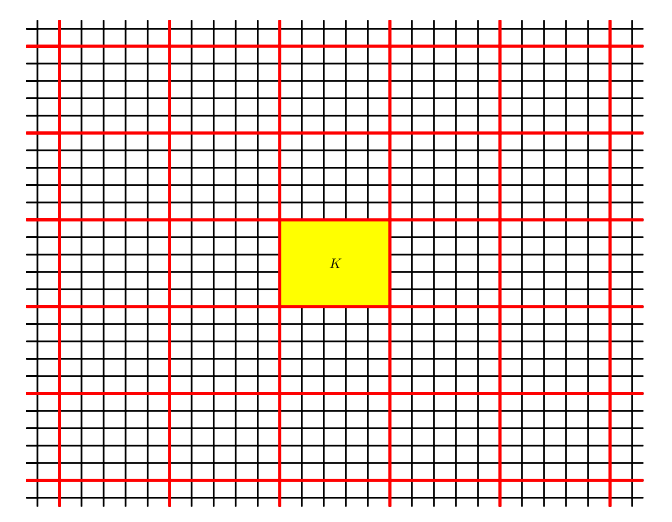} 
\caption{Illustration of the grids. We use black to denote the fine grid and red to denote the coarse grid. A coarse element $K$ is shown.}
\label{grid} 
\end{figure}

Next we give the formulation of the DG method. For each coarse block $K_{i}$, we denote the restriction of the Sobolev space $H^{1}(\Omega)$ on $K_{i}$ by $V_h\left(K_{i}\right) $. That is, for the discrete space,  $V_h\left(K_i\right)$ is the space of piecewise bilinear functions with respect to the fine grid in $K_i$. 
Then the global DG space is 
$$V_{h}=\bigoplus_{i=1}^{N} V_{h}\left(K_{i}\right).$$
We remark that functions in $V_{h}$ are continuous within coarse blocks, but discontinuous across the coarse grid edges in general. The formulation of IPDG method then reads: find $u_{h} \in V_{h}$ such that
\begin{equation}
    a_{\text{DG}}\left ( u_{h}, v \right )=\int_{\Omega }  f\cdot  v \; \text{dx}, \quad \forall \  v \in V_{h}, \label{adg}
\end{equation}
where the bilinear form $a_{\text{DG}}\left ( u,v \right )$ is defined as 

\begin{equation}
\begin{aligned}
a_{\text{DG}}\left (  u, v \right )&=\sum_{K\in \mathcal{T} ^{H}}\int_{K} \sigma \left (  u \right ):\epsilon \left ( v \right )\text{dx}\\
&-\sum_{E\in \mathcal{E}^{H}}\int_{E}\left ( \left \{ \sigma\left( u\right) \right \} :\underline{{{[\![ v]\!]}} }+\eta \underline{{{[\![ u]\!]}}}:  \left \{  \sigma\left( v\right) \right \} \right)\text{ds}\\
&+\sum_{E\in \mathcal{E} ^{H}}\frac{\gamma }{h}\int_{E}\left (\underline{{{[\![ u]\!]}}}:\left \{  C \right \}:\underline{{{[\![ v]\!]}}} +{[\![ u]\!]} \cdot \left \{ D\right \} \cdot {[\![ v]\!]}\right )\text{ds},
\end{aligned}
\end{equation}
where \begin{equation}
    D=\begin{bmatrix}
C_{11} & 0 & 0\\ 
 0& C_{33} &0 \\ 
 0& 0 & C_{55}
\end{bmatrix}
\end{equation}
and $\eta$ can take the value of $-1, 0$ or $1$, where we pick $\eta=1$ in this paper that corresponds to the classical Symmetric Interior Penalty Galerkin (SIPG) method. Here $\gamma>0$ is the penalty parameter. Besides, $\eta=0$  is for the Incomplete Interior Penalty Galerkin (IIPG) method and $\eta=-1$  is for the Non-symmetric Interior Penalty Galerkin (NIPG) method. In the above bilinear form, $[\![ v]\!]$ is the vector jump and \underline{{{[\![v]\!]}}} is the matrix jump to be defined as follows. For coarse grid edges in the interior of the domain, we define 
\begin{equation}
    {{[\![ v]\!]}}=v^{+}-v^{-},
\end{equation}
\begin{equation}
\underline{{{[\![v]\!]}}}=v^{+} \otimes n^{+}+v^{-} \otimes n^{-},
\end{equation}
where $'+'$ and $'-'$ respectively represent the values on two adjacent cells $K^+$ and $K^-$ sharing the coarse grid edge, and $n^+$ and $n^-$
are the unit outward normal vector on the boundary of $K^+$ and $K^-$. In addition, 
we define the average of a tensor $\sigma $ as 
\begin{equation}
    \left \{  \sigma \right \} =\frac{\sigma ^{+}+ \sigma ^{-}}{2},
\end{equation}
where $\sigma ^{+},$  $  \sigma ^{-}$ are two tensors defined on $K^+$ and $K^-$ sharing a common interior coarse edge.
For the coarse grid edges lying on the boundary of the domain,  
we define
\begin{equation}
    \left \{  \sigma \right \} =\sigma, \  [\![v]\!]= v, \     \underline{\llbracket v \rrbracket}=v \otimes n,
\end{equation}
where $n$ is the normal vector pointing outside of the domain.
For each coarse element $K_i$, we define the oversampling domain $K_{i,p}$ by extending $K_i$ by $p$ coarse grid layers. 
In addition, for the $i$-th coarse grid node, we let $\omega_i$ be the coarse neighborhood defined by the union of all coarse elements having the $i$-th node.
\autoref{grid2} shows an example of an oversampling domain for $p=1$ and a coarse neighborhood.

\begin{figure}[H] 
\centering 
 \includegraphics[width=0.9\textwidth]{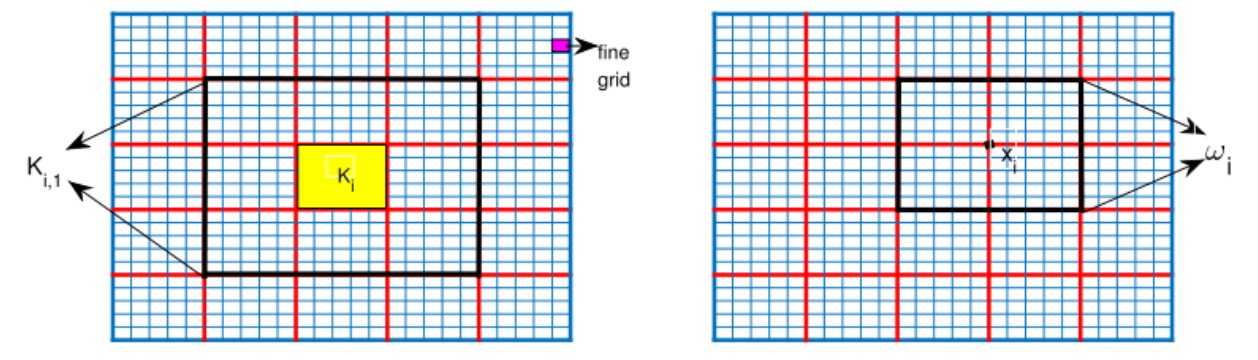} 
 \caption{Oversampling grid illustration.}
 \label{grid2} 
 \end{figure}

\section{Construction of multiscale basis functions}\label{section3}

In this section, we will present the construction of the multiscale space $V_{\text{cem}}$ that will be used in our multiscale method.
The main steps include generating multiscale auxiliary functions and then computing multiscale basis functions for global coupling. 
\subsection{Multiscale auxiliary functions}
We first show how to construct auxiliary functions. For the $j\text{-}\rm{th}$ coarse block $K_{j}$,  recall that $V_{h}\left(K_{j}\right)$ is the space of piecewise bilinear elements defined on $\mathcal{T} ^{h}$ in $K_{j}$. 
We compute $\left ( \lambda _{i}^{j},\psi _{i}^{j} \right )\in \mathbb{R}\times V_{h}\left(K_{j}\right)$ by solving the following eigenvalue problem
\begin{equation}
    a_{j}\left(\psi _{i}^{j},v\right)=\lambda _{i}^{j}b_{j}\left(\psi _{i}^{j},v\right),\ \forall \ v\in V_{h}\left ( K_{j} \right )\label{abf},
\end{equation}
in which
\begin{equation}
    a_{j}\left ( w,v \right )=\int_{K_{j}} \sigma \left(w\right) :  \epsilon \left(v\right)\text{dx},
\end{equation}
\begin{equation}
     b_{j}(w, v)=\int_{K_{j}}k_1 wv \text{dx},  \label{eq:bj}
\end{equation}
$\{ \chi_{i} \}_{i=1}^{N_c}$ is a set of partition of unity functions for the coarse grid, $k_2=\lambda + 2\mu $  and $k_1=\sum_{i=1}^{N_{c}}k_2\left|\nabla \chi_{i}\right |^{2}.$
We remark that the choice of the bilinear form (\ref{eq:bj}) is motivated by our convergence analysis.
The term $\left|\nabla \chi_{i}\right |^{2}$ is merely a scaling factor, and is not crucial in our analysis. The most important part in (\ref{eq:bj})
is the inclusion of $k_2$. Such choice gives small and contrast dependent eigenvalue for each high contrast channel network in a coarse element.
See \cite{fu2020constraint,fu2020elastic} for more details.
We will arrange the eigenvalues of the above problem (\ref{abf}) in non-decreasing order.
The local auxiliary space $W_{\text{aux}}\left ( K_{j} \right )$ is formed by the first $G_{j}$ eigenfunctions corresponding to the first $G_{j}$ smallest eigenvalues, that is, 
\begin{equation}
    W_{\text{aux}}\left ( K_{j} \right )=\text{span}\left \{ \psi _{i}^{j}\ |1\ \leq i\leq G_{j} \right \}.
\end{equation}
The global multiscale auxiliary space $W_{\text{aux}}$ is the sum of the local auxiliary spaces as follows,
\begin{equation}
    W_{\text{aux}}=\bigoplus_{j=1}^{N}W_{\text{aux}}\left(K_{j}\right).
\end{equation}
We next define the notion of $\psi_{i}^{j}$-orthogonality in the space $V_{h}$. 
We let $b\left ( u,v \right )=\sum_{j=1}^{N}b_{j}\left (u,v  \right )$.
For each $\psi_{i}^{j} \in W_{\text{aux}}$, we say that $\zeta\in V_h $ is $\psi_{i}^{j}$-orthogonal if
\begin{equation}
    b\left ( \zeta ,\psi _{i'}^{j'} \right )=\left\{\begin{matrix}
0,\ &\text{if}\quad {j}'\ \neq \ j, \text{ or } \ {i}'\neq i, \\ 
1,\ &\text{otherwise}.
\end{matrix}\right.
\end{equation}
Finally,
a projection operator $\pi:V_{h}\rightarrow W_{\text{aux}}$ is defined by
\begin{equation}
     \pi\left ( v \right )=\sum_{j=1}^{N}\sum_{i=1}^{G_{j}}b_{j}\left ( v,\psi _{i}^{j} \right )\psi _{i}^{j}, \ \forall \ v\in V_{h}.
 \end{equation}

\subsection{Multiscale basis functions}
In this section, we construct the multiscale trial basis functions. 
Let $\psi_{i}^{j}\in W_{\text{aux}}$ be the $i$-th eigenfunction of the spectral problem (\ref{abf}) for the coarse element $K_j$.
We will construct a global basis function $\phi_{i,\text{glo}}^{j}$ and a local basis function $  \phi_{i,\text{ms}}^{j}$.
The global basis function has support in the whole computational domain. The global basis functions span an approximation space that gives good approximation property.
We will show that the global basis functions have an exponentially decay property, which states that the global basis functions
have small values outside an oversampling region. 
The properties suggest the construction of local basis functions, which can be computed efficiently by solving local problems and are more suitable for numerical computations.

We  first consider the global version of the basis functions. 
For each multiscale auxiliary function $\psi_{i}^{j}\in W_{\text{aux}}$, there is a  multiscale trial basis function $\phi_{i,\text{glo}}^{j}$ that satisfies the following constraint energy minimization problem 
\begin{equation}
    \phi_{i,\text{glo}}^{j}={\rm argmin}\left \{ a_{\text{DG}}\left ( \phi ,\phi  \right):\pi\left(\phi\right)=\psi_{i}^{j}\right \}\label{basisiob},
\end{equation}
where the minimum is sought in the space $V_h$. 
We note that  $\phi_{i,\text{glo}}^{j}$ is $\psi_{i}^{j}$-orthogonal.
By using the Lagrange multiplier, the above minimization problem can be rewritten as : find $\phi _{i,\text{glo}}^{j}\in V_h$, 
$\delta _{i}^{j}\in W_{\text{aux}}$ such that
\begin{equation}
\begin{aligned}
    a_{\text{DG}}\left (\phi_{i,\text{glo}}^{j}, \phi\right )+b\left ( \phi,\delta _{i}^{j} \right )&=0,\ \forall\  \phi \in V_h,\\b\left ( \phi _{i,\text{glo}}^{j}-\psi_{i}^{j},\delta  \right )&=0,\  \forall\  \delta  \in W_{\text{aux}}.
\end{aligned}
\end{equation}
Then we define the global multiscale space
\begin{equation}
    V_{\text{cem}}=\text{span}\left\{\phi _{i,\text{glo}}^{j}:1\leq i\leq G_{i},1\leq j\leq N\right\}.
\end{equation}

Now, we define our local multiscale trial basis functions. This construction is motivated by an exponential decay property
of the basis function $\phi^j_{i,\text{glo}}$. Therefore,
local multiscale basis functions are constructed on some oversampling regions. We introduce the subspace $V_{h}\left(K_{j,p}\right)$
defined by the union of all $V_h(K_m)$ for $K_m \subset K_{j,p}$.
Similarly, the subspace $W_{\text{{aux}}}\left(K_{j,p}\right)$ is defined by the union of all $W_{\text{aux}}\left(K_m\right)$ for $K_m \subset K_{j,p}$.
Given a function $\psi^j_i \in W_{\text{aux}}(K_{j,p})$, 
the local multiscale trial basis function $\phi_{i,\text{ms}}^{j}\in V_{h}(K_{j,p})$ is defined as 
\begin{equation}
    \phi_{i,\text{ms}}^{j}=\underset{\phi\in V_{h}\left(K_{j,p}\right)}{\text{argmin}}\left \{ a_{\text{DG}}(\phi,\phi):\pi\left ( \phi \right )=\psi _{i}^{j} \right \},\label{minimization}
\end{equation}
where the zero Dirichlet boundary condition on $\partial K_{j,p}$ is weakly imposed.
Similarly, we use the Lagrange multiplier to write the above problem as: find\ $\phi_{i,\text{ms}}^{j}\in V_{h}\left(K_{j,p}\right)$ and $\delta_{i,\text{ms}}^{j}\in W_{\text{aux}}\left(K_{j,p}\right)$ such that
\begin{equation}
\begin{aligned}
    a_{\text{DG}}\left (\phi_{i,\text{ms}}^{j}, \phi\right )+b\left ( \phi,\delta _{i,p}^{j} \right )&=0,\  \forall\ \phi \in V_{h}\left(K_{j,p}\right),\\b\left ( \phi_{i,\text{ms}}^{j}-\psi_{i,p}^{j},\delta  \right )&=0,\ \forall\ \delta \in W_{\text{aux}}\left(K_{j,p}\right).
\end{aligned}
\end{equation}
Then we get the localized multiscale trial space, i.e.
\begin{equation}
    V_{H}=\text{span}\left\{\phi _{i,\text{ms}}^{j}:1\leq i \leq G_{j},1\leq j\leq N \right\}.
\end{equation}


\subsection{Relaxed minimization}
\label{sec:relax}

We recall that the basis functions in the previous section solve constraint energy minimization problems (\ref{minimization}).
These constraints can be relaxed. In particular, we consider the following unconstrained energy minimization problems for the constructions of basis functions. 
Given a function $\psi^j_i \in W_{\text{aux}}(K_{j,p})$, we compute $\phi_{i,\text{ms}}^j \in V_h(K_{j,p})$ by
\begin{equation}
    \phi _{i,\text{ms}}^{j}=\text{argmin}\left \{ a_{\text{DG}}\left ( \phi ,\phi  \right ) +b\left ( \pi\left ( \phi  \right )-\psi _{i}^{j} ,\left ( \phi  \right )-\psi _{i}^{j} \right )|\ \phi \in V_{h}\left ( K_{j,p} \right )\right \}\label{basisrelexip}.
\end{equation}
This problem is equivalent to the following variational formulation
\begin{equation}
    a_{\text{DG}}\left ( \phi _{i,\text{ms}}^{j},w \right )+b\left ( \pi\left ( \phi _{i,\text{ms}}^{j} \right ) ,\pi\left ( w \right )\right )=b\left ( \phi _{i,\text{ms}}^{j}, \pi\left ( w \right )\right ),\quad  \forall \ w\in V_{h}\left ( K_{j,p} \right )\label{problemrelaxip}.
\end{equation}
In \autoref{example2msbasisrelaxed} and  \autoref{example2msbasisconstraint}, we show examples of multiscale basis functions, from which we can see clearly their 	decay behaviors.

Furthermore, 
the global multiscale basis function $\phi _{i,\text{glo}}^{j}\in V_h$ is defined as 
\begin{equation}
    \phi _{i,\text{glo}}^{j}=\text{argmin}\left \{ a_{\text{DG}}\left ( \phi ,\phi  \right ) +b\left ( \pi\left ( \phi  \right )-\psi _{i}^{j} ,\left ( \phi  \right )-\psi _{i}^{j} \right )|\ \phi \in V_h\right \}\label{basisrelexiob}.
\end{equation}
This problem is equivalent to the following variational formulation
\begin{equation}
    a_{\text{DG}}\left ( \phi _{i,\text{glo}}^{j},v \right )+b\left ( \pi\left ( \phi _{i,\text{glo}}^{j} \right ) ,\pi\left ( w \right )\right )=b\left ( \phi _{i,\text{glo}}^{j}, \pi\left ( w \right )\right ),\quad  \forall \ w\in V_h\label{problemrelaxiob}.
\end{equation}

Finally, the multiscale solution $u_{\text{ms}}^{\text{off}}$ is defined as the solution of the following problem, find $u_{\text{ms}}^{\text{off}}\in V_{H}$, such that 
\begin{equation}
\label{eq:ms}
    a_{\text{DG}}\left (u_{\text{ms}}^{\text{off}},v \right )=\int_{\Omega } fv\ \text{dx},  \ \forall \ v \in V_{H}.
\end{equation}

\begin{figure}
  \centering
  \includegraphics[scale=0.4]{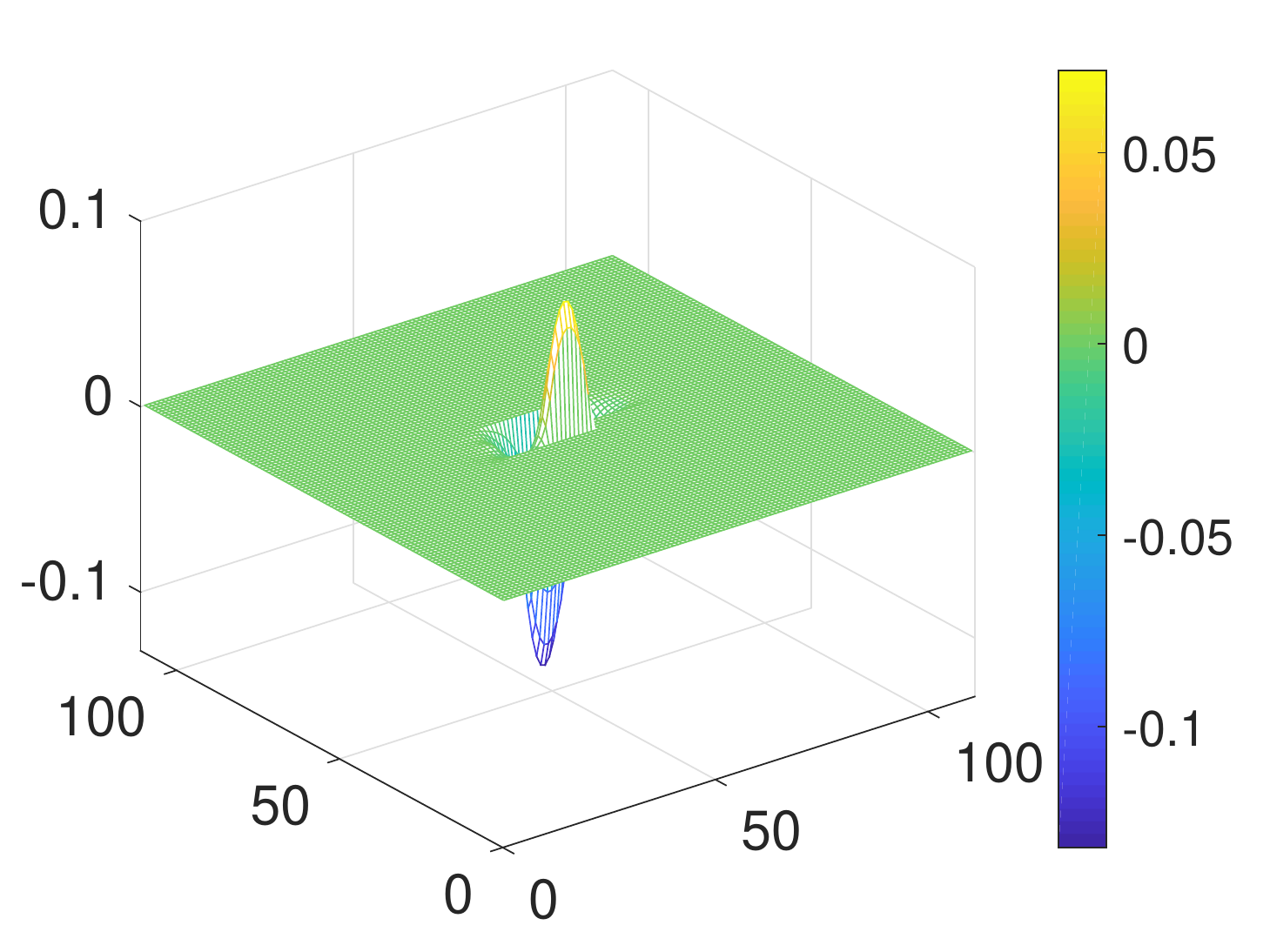}
  \hspace{0.5in}
  \includegraphics[scale=0.4]{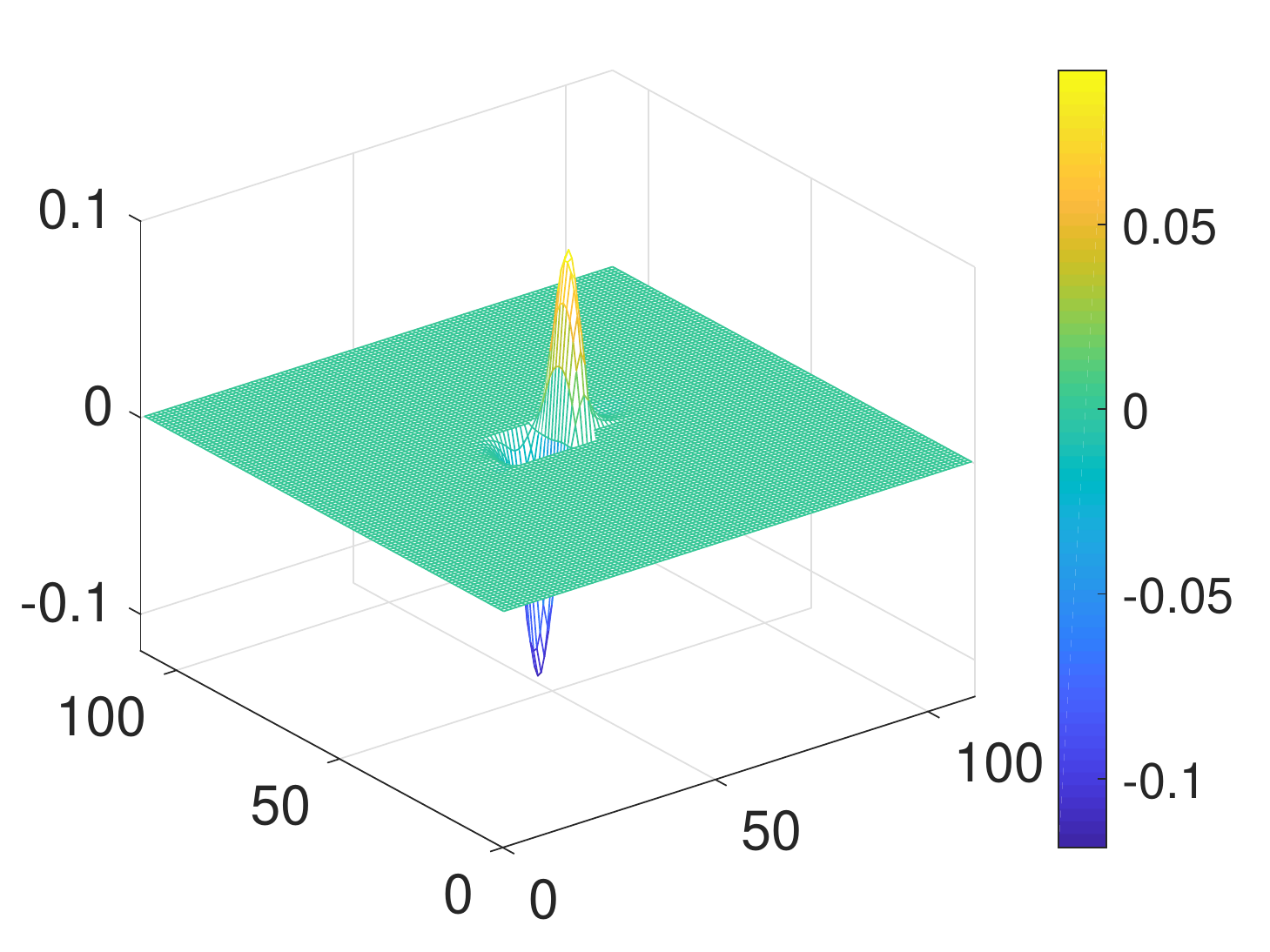}
  \caption{An example of  relaxed CEM-GMsFEM basis function. First component (left),  and second component (right).}
  \label{example2msbasisrelaxed} 
\end{figure}
\begin{figure}
  \centering
  \includegraphics[scale=0.4]{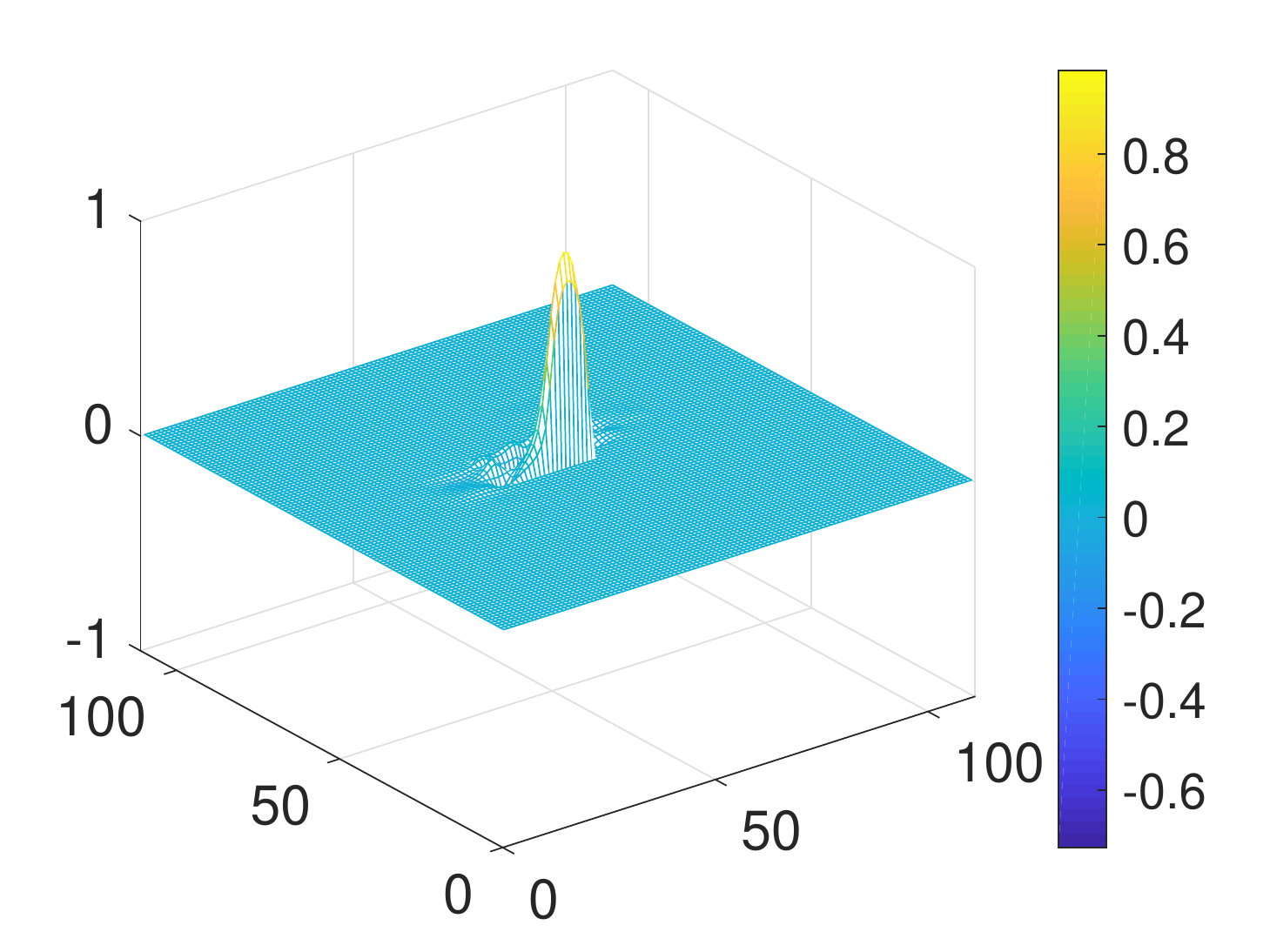}
  \hspace{0.5in}
  \includegraphics[scale=0.4]{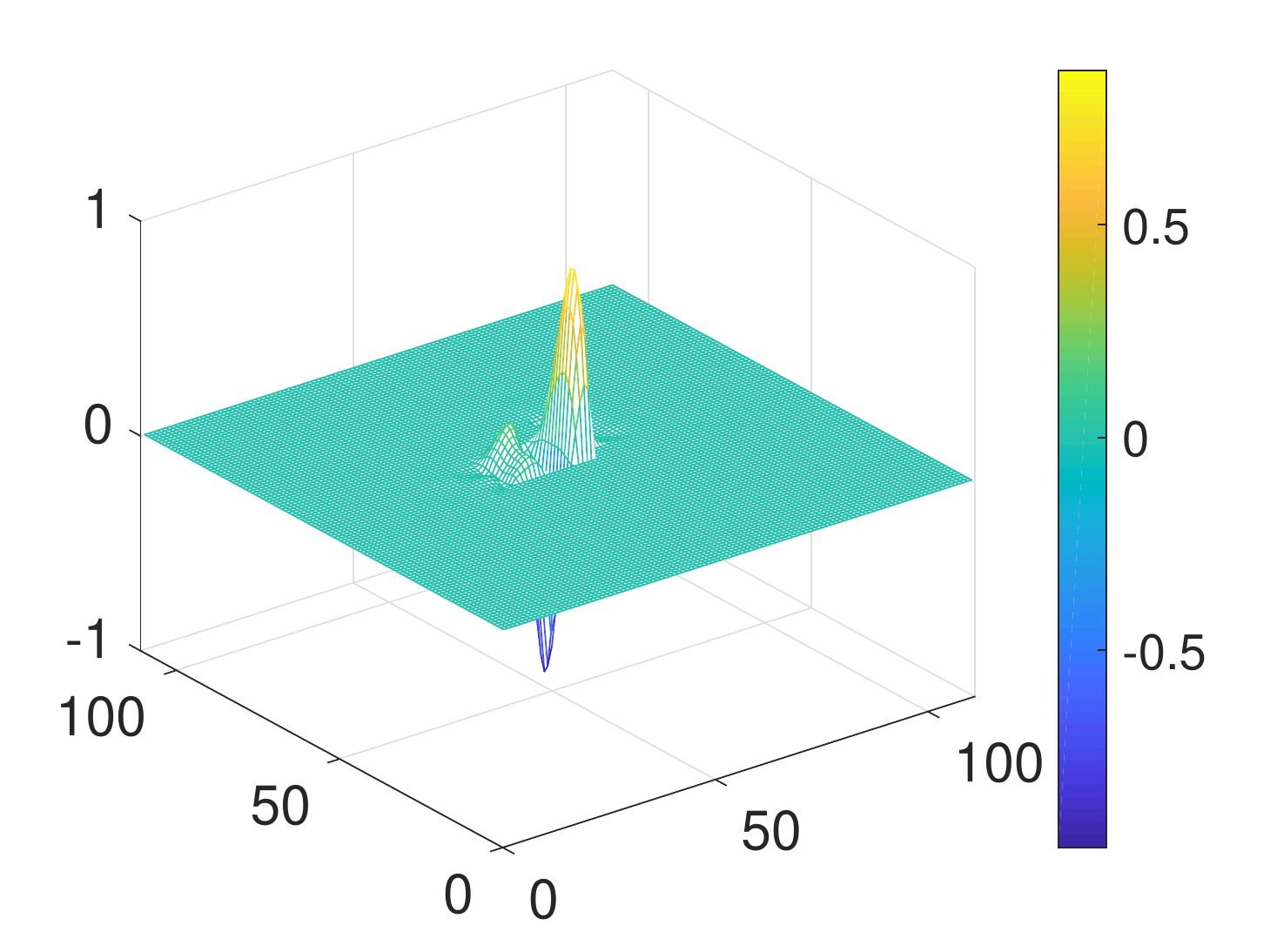}
  \caption{An example of  constrained CEM-GMsFEM basis function. First component (left),  and second component (right).}
  \label{example2msbasisconstraint} 
\end{figure}
\section{Convergence Analysis}\label{section4}

In this section, the stability and convergence analysis of our method  are carried out. 
We will first give an outline of our analysis. In Lemma~\ref{lem:coer}, we will prove the coercivity and continuity
of the bilinear form of $a_{\text{DG}}$ in our scheme (\ref{eq:ms}). In Lemma~\ref{lemma:dgcoupling}, we will prove 
that the global basis functions have an exponential decay property. In particular, we will show that the difference between
the global basis function and the corresponding local basis function is small if the oversampling region is sufficiently large.
In Lemma~\ref{lemma4.4}, we prove a stable decomposition property.
Theorem~\ref{thm:thm1} is the main result, which states that the error of approximation in the DG norm is bounded above by $O(H)$
if the size of the oversampling region is proportional to the logarithm of the coarse grid size $H$ and the logarithm of the contrast ratio of the coefficient.
In addition, Lemma~\ref{lemma4.5} is an analog of the Lemma~\ref{lemma:dgcoupling} for the relaxed method.
Finally, Theorem~\ref{thm:2} is the main convergence result for the relaxed method.

For our analysis, we define the following DG norm
\begin{equation}
    \left \|  v  \right \|_{\text{DG}}^{2}=\sum_{K\in\mathcal{T} ^{H}}\int_{K} \epsilon \left ( v  \right ):C:\epsilon \left ( v  \right ) \text{dx}+\frac{\gamma }{h}\sum_{E\in \mathcal{E}^{H}}\int_{E} \left(\underline{{[\![ v]\!] }}:C:\underline{{[\![ v]\!] } }+[\![ v]\!] \cdot  D \cdot [\![ v]\!]\right)\text{ds}. 
\end{equation}
The following constants are independent of the mesh size and the number of oversampling layers unless otherwise indicated. We assume that  there exist positive constants $0<c_{m_0} \leq c_{m_1}$ such that for a.e. $x \in \Omega,$  $ C\left(x\right)$ is a positive definite matrix with $$ c_{m_0} \leq \lambda_{\min }(C\left(x\right)) \leq \lambda_{\max }\left(C\left(x\right)\right)  \leq c_{m_1} ,$$
where $\lambda_{\min }(C\left(x\right))$ and $\lambda_{\max }\left(C\left(x\right)\right)$ are the minimum and the maximum eigenvalues of $C\left(x\right)$.

We also define the $b$-norm as $\|\cdot\|_{b}:=\sqrt{b\left(\cdot, \cdot\right)}$.  Moreover, for $\lambda_i^j$ in (\ref{abf}),  we let $$c_1= \min _{1\leq j\leq N_c}\lambda_{G_j +1}^{j}, \ c_2=\max _{1 \leq j \leq N_c} \max _{1 \leq i \leq G_{j}} \lambda_{i}^{j}.$$

In the following, we will show the continuity and coercivity of the IPDG bilinear form. For a coarse element $K$, let $n_{\partial K}$ be the unit outward normal vector on $\partial K$.
Moreover, we define $V_h\left(\partial K\right)$ as the restriction of $V_h\left(K\right)$ to $\partial K$.
Given $u \in V_h\left(K\right)$,
the normal flux $\sigma \left(u\right)n_{\partial K}$ belongs to $V_{h}\left(\partial K\right)$ and is defined by the following formula:
\begin{equation}
\label{eq:flux}
    \int_{\partial K}\left ( \sigma\left ( u \right )n_{\partial K} \right )\cdot v \ \text{ds}=\int _{K} \epsilon \left ( u \right ):C:\epsilon \left ( \hat{v} \right )\text{dx}, \  \hat{v}\in V_{h}\left(\partial K\right),
\end{equation}
where $\hat{v} \in K$ is an extension of $v$ to $K$ 
defined by solving 
\begin{equation*}
\int_{K}\varepsilon \left ( \hat{v} \right ) :C:\varepsilon \left ( w \right ) = 0, \ \forall \ w\in V_h\left(K\right)
\end{equation*}
with the boundary condition $\hat{v}=v$ on $\partial K$.
Using the results in \cite{chung2014generalizedwave}, there is a uniform constant $A>0$ such that 
\begin{equation}
\label{eq:trace}
    \sum_{K\in \mathcal{T}^{H}}\int_{\partial K}\left (  \sigma\left ( v \right )  n_{\partial K}  \right)^{2}\text{ds}\leq \frac{Ac_{m_1}^{2}}{2c_{m_0}h}\left (\sum_{K\in \mathcal{T}^{H}}\int_{K}\varepsilon \left ( v \right ):C:\varepsilon \left ( v \right )\text{dx} \right ).
\end{equation}

We will employ the following coercivity and continuity results on the IPDG bilinear form in our study, assuming the penalty value $\gamma$ is suitably large.
\begin{lemma}\label{lem:coer}
Assume that the penalty parameter $\gamma$ satisfies $\gamma>Ac_{m_1}^2 c_{m_0}^{-2}$, where the constant $A>0$ is defined in (\ref{eq:trace}).
Then the bilinear form $a_{\text{DG}}$ defined in (\ref{adg}) is continuous and coercive, that is
\begin{equation}
      \left |a_{\text{DG}}\left (  u, v \right )\right|  \leq 2\|u\|_{\text{DG}}\|v\|_{\text{DG}}, \ \forall\ u, v\in V_{h},\label{equ:lemma4.1-1}
\end{equation}
and 
\begin{equation}
     \frac{1}{2}\|v\|_{\text{DG}}^{2} \leq a_{\text{DG}}\left(v, v\right) , \ \forall\ v \in V_{h}.\label{equ:lemma4.1-2}
\end{equation}
\end{lemma}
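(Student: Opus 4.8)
The plan is to prove the two inequalities \eqref{equ:lemma4.1-1} and \eqref{equ:lemma4.1-2} by the standard IPDG energy-estimate technique, carefully tracking the elasticity tensor $C$ and the boundary matrix $D$. First I would establish a Cauchy--Schwarz-type control on the consistency (flux) terms. Writing $a_{\text{DG}}(u,v)$ as the sum of the volume term $T_1 = \sum_K \int_K \sigma(u):\epsilon(v)$, the symmetric consistency term $T_2 = -\sum_E \int_E(\{\sigma(u)\}:\underline{\llbracket v\rrbracket} + \underline{\llbracket u\rrbracket}:\{\sigma(v)\})$, and the penalty term $T_3 = \sum_E \frac{\gamma}{h}\int_E(\underline{\llbracket u\rrbracket}:\{C\}:\underline{\llbracket v\rrbracket} + \llbracket u\rrbracket\cdot\{D\}\cdot\llbracket v\rrbracket)$, the volume and penalty terms are each bounded by $\|u\|_{\text{DG}}\|v\|_{\text{DG}}$ directly from the definition of the DG norm and the spectral bounds $c_{m_0}\le\lambda_{\min}(C)\le\lambda_{\max}(C)\le c_{m_1}$ (the penalty term uses that $D$ is the diagonal part of $C$, hence also spectrally equivalent). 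The only delicate piece is $T_2$: here I would use the trace-type inequality \eqref{eq:trace}, which says exactly that $\sum_K\int_{\partial K}(\sigma(v)n_{\partial K})^2\,\mathrm{ds}\le \frac{Ac_{m_1}^2}{2c_{m_0}h}\sum_K\int_K\epsilon(v):C:\epsilon(v)\,\mathrm{dx}$. Combining this with Cauchy--Schwarz on each edge and Young's inequality lets one bound $|T_2|$ by a constant times $(\text{flux norm of }u)\cdot(\text{jump norm of }v) + (\text{jump norm of }u)\cdot(\text{flux norm of }v)$, and then by $\|u\|_{\text{DG}}\|v\|_{\text{DG}}$ up to the factor $\sqrt{A}c_{m_1}/(c_{m_0}\sqrt{\gamma})$, which is $<1$ under the hypothesis $\gamma > Ac_{m_1}^2 c_{m_0}^{-2}$.

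For continuity \eqref{equ:lemma4.1-1}, I would simply add the three bounds: $|a_{\text{DG}}(u,v)| \le |T_1| + |T_2| + |T_3| \le \|u\|_{\text{DG}}\|v\|_{\text{DG}} + (\text{something}<1)\|u\|_{\text{DG}}\|v\|_{\text{DG}}$-type terms, and absorb everything into the constant $2$. The bookkeeping is to split the $\frac{\gamma}{h}$ normalization consistently between the flux factor and the jump factor in the $T_2$ estimate so that each factor is exactly a piece of $\|\cdot\|_{\text{DG}}^2$; this is where one must be slightly careful that the matrix-weighted inner products $\underline{\llbracket v\rrbracket}:C:\underline{\llbracket v\rrbracket}$ appearing in the norm match the ones in $T_3$, while the jump appearing in $T_2$ is unweighted and must first be compared to the $C$-weighted jump via $c_{m_0}$.

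For coercivity \eqref{equ:lemma4.1-2}, I would take $u=v$: then $T_1 = \sum_K\int_K\epsilon(v):C:\epsilon(v) =: \|v\|_{a}^2$ and $T_3 = \frac{\gamma}{h}\sum_E\int_E(\underline{\llbracket v\rrbracket}:C:\underline{\llbracket v\rrbracket} + \llbracket v\rrbracket\cdot D\cdot\llbracket v\rrbracket) =: \|v\|_{J}^2$, so $\|v\|_{\text{DG}}^2 = \|v\|_a^2 + \|v\|_J^2$ (for $\eta=1$ the boundary-edge terms work the same way with the one-sided conventions). The cross term is $T_2 = -2\sum_E\int_E\{\sigma(v)\}:\underline{\llbracket v\rrbracket}$ when $\eta = 1$; applying \eqref{eq:trace} and Young's inequality with a free parameter $\theta$, one gets $|T_2| \le \theta\|v\|_a^2 + \frac{1}{\theta}\cdot\frac{Ac_{m_1}^2}{2c_{m_0}^2 h}\cdot(\text{jump}^2) \le \theta\|v\|_a^2 + \frac{1}{\theta}\cdot\frac{Ac_{m_1}^2}{c_{m_0}^2\gamma}\|v\|_J^2$. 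Choosing $\theta$ appropriately (e.g. $\theta = 1/2$ together with $\gamma > Ac_{m_1}^2 c_{m_0}^{-2}$, which forces the $\|v\|_J^2$ coefficient below $1/2$) yields $a_{\text{DG}}(v,v) = \|v\|_a^2 + \|v\|_J^2 - |T_2| \ge \tfrac12\|v\|_a^2 + \tfrac12\|v\|_J^2 = \tfrac12\|v\|_{\text{DG}}^2$.

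The main obstacle I anticipate is purely the constant-chasing in the $T_2$ estimate: making sure the averaging operator $\{\cdot\}$, the factor $\tfrac{1}{2}$ in its definition, the edge-wise Cauchy--Schwarz, and the global trace inequality \eqref{eq:trace} combine to give precisely the threshold $\gamma > Ac_{m_1}^2 c_{m_0}^{-2}$ rather than some other constant, and that the unweighted jump in $T_2$ is correctly dominated by the $C$- and $D$-weighted jumps in $\|v\|_{\text{DG}}$ using only $c_{m_0}$ (and the spectral equivalence of $D$ with $C$). Everything else — linearity, symmetry for $\eta=1$, and the treatment of boundary edges — is routine.
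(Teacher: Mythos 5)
Your proposal follows essentially the same route as the paper's proof: split $a_{\text{DG}}$ into the volume, consistency and penalty pieces, bound the volume and penalty pieces directly by the DG norm, and absorb the consistency term via the trace estimate (\ref{eq:trace}), edge-wise Cauchy--Schwarz and Young's inequality, with the hypothesis on $\gamma$ supplying the absorption. The only caveat is your specific constant choice in the coercivity step — with $\theta=\tfrac12$ and $\gamma > Ac_{m_1}^{2}c_{m_0}^{-2}$ the coefficient of $\|v\|_{J}^{2}$ is only below $2$, not below $\tfrac12$, so obtaining exactly the stated factor $\tfrac12$ needs a larger $\gamma$ or a different split — but this mirrors an analogous looseness in the paper's own bookkeeping (which absorbs the full volume term and still asserts the $\tfrac12$ bound), so the approach and its reliance on (\ref{eq:trace}) coincide with the paper's.
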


\begin{proof}
By the definition of $a_{\text{DG}}\left (  v, v \right ),$ we have 
\begin{equation*}
\begin{aligned}
a_{\text{DG}}\left (  v, v \right )&=\sum_{K\in \mathcal{T} ^{H}}\int_{K} \sigma \left (  v \right ):\epsilon \left ( v \right )\text{dx}\\
&-\sum_{E\in \mathcal{E}^{H}}\int_{E}\left ( \left \{ \sigma\left( v\right) \right \} :\underline{{{[\![ v]\!]}} }+ \underline{{{[\![v]\!]}}}:  \left \{  \sigma\left( v\right) \right \} \right)\text{ds}\\
&+\sum_{E\in \mathcal{E} ^{H}}\frac{\gamma }{h}\int_{E}\left (\underline{{{[\![v]\!]}}}:\left \{  C \right \}:\underline{{{[\![ v]\!]}}} +{[\![ v]\!]} \cdot \left \{ D\right \} \cdot {[\![ v]\!]}\right )\text{ds},
\end{aligned}
\end{equation*}
and by the definition of the $\text{DG}$-norm, we have
\begin{equation}
\label{eq:adg}
    a_{\text{DG}}\left (  v, v \right )=\left \|  v  \right \|_{\text{DG}}^{2}-\sum_{E\in \mathcal{E}^{H}}\int_{E}\left ( \left \{ \sigma\left( v\right) \right \} :\underline{{{[\![ v]\!]}} }+ \underline{{{[\![v]\!]}}}:  \left \{  \sigma\left( v\right) \right \} \right)\text{ds}.
\end{equation}
To estimate the second term in the right hand side of (\ref{eq:adg}), we note that
$$\sum_{E\in \mathcal{E}^{H}}\int_{E}\left ( \left \{ \sigma\left( v\right) \right \} :\underline{{{[\![ v]\!]}} }+ \underline{{{[\![v]\!]}}}:  \left \{  \sigma\left( v\right) \right \} \right)\text{ds} = 2\sum_{E\in \mathcal{E}^{H}}\int_{E}\left ( \left \{ \sigma\left( v\right) \right \} :\underline{{{[\![ v]\!]}} } \right)\text{ds}.$$
Then by the Cauchy-Schwarz inequality, we obtain
\begin{equation*}
    \begin{aligned}
    2\sum_{E\in \mathcal{E}^{H}}\int_{E}\left ( \left \{ \sigma\left( v\right) \right \} :\underline{{{[\![ v]\!]}} } \right)\text{ds}
    &\leq \frac{2h}{\gamma c_{m_0}}\sum_{E\in \mathcal{E}^{H}}\int_{E}\left ( \left \{ \sigma(v) n \right \}  \right)^{2}\text{ds}+\frac{c_{m_0}}{2}\sum_{E\in \mathcal{E}^{H}}\int_{E}\frac{\gamma }{h}\left ( \underline{{{[\![ v]\!]}} } \right)^{2}\text{ds}\\
    & \leq   \frac{2h}{\gamma c_{m_0}}\sum_{K\in \mathcal{T}^{H}}\int_{\partial K}\left (  \sigma(v) n_{\partial K}  \right)^{2}\text{dx}+\frac{1}{2}\sum_{E\in \mathcal{E}^{H}}\int_{E}\frac{\gamma }{h}\left ( \underline{{{[\![ v]\!]}} }:C:\underline{{{[\![ v]\!]}} } \right)\text{ds}.
    \end{aligned}
\end{equation*}
With (\ref{eq:trace}), it is straightforward to get 
\begin{equation*}
    \begin{aligned}
    2\sum_{E\in \mathcal{E}^{H}}\int_{E}\left ( \left \{ \sigma\left( v\right) \right \} :\underline{{{[\![ v]\!]}} } \right)\text{ds}
    & \leq  \frac{Ac_{m_1}^{2}}{\gamma c_{m_0}}\left (\sum_{E\in \mathcal{E}^{H}}\int_{E}\varepsilon \left ( v \right ):C:\varepsilon \left ( v \right )\cdot n_{\partial K}\text{ds} \right )\\
    &\quad +\frac{1}{2}\sum_{E\in \mathcal{E}^{H}}\int_{E}\frac{\gamma }{h}\left ( \underline{{{[\![ v]\!]}} }:C:\underline{{{[\![ v]\!]}} } \right)\text{ds}.
    \end{aligned}
\end{equation*}
Therefore, if we take $\gamma \geq \frac{A c_{m_1}^{2}}{ c_{m_0}^{2}},$ we obtain
\begin{equation*}
    \begin{aligned}
    a_{\text{DG}}\left (  v, v \right )
    &=\left \|  v  \right \|_{\text{DG}}^{2}-2\sum_{E\in \mathcal{E}^{H}}\int_{E}\left ( \left \{ \sigma\left( v\right) \right \} :\underline{{{[\![ v]\!]}} } \right)\text{ds}\\
    &\geq \frac{1}{2}\left \| v \right \|_{\text{DG}}, \quad \forall \ v \in V_{h}.
    \end{aligned}
\end{equation*}
We hence proved (\ref{equ:lemma4.1-2}).

(\ref{equ:lemma4.1-1}) can be proved in a similar way.  By the definition of $a_{\text{DG}}$ and the properties of inequalities, we obtain
\begin{equation*}
    \begin{aligned}
    \left |a_{\text{DG}}\left (  u, v \right )\right| 
      &\leq  \underset{S_{1}}{\underbrace{\left |\sum_{K\in \mathcal{T}^{H}}\int_{K} \sigma \left (  u \right ):\epsilon \left ( v \right )\text{dx} \right |} }\\
      &\quad +\underset{S_{2}}{\underbrace{\left |\sum_{E\in \mathcal{E}^{H}}\int_{E}\left (\left\{\sigma\left( u\right) \right\} :\underline{{[\![ v]\!]}}+\underline{{{[\![u]\!]}}}: \left\{\sigma\left( v\right) \right\} \right)\text{ds} \right |}} \\
      &\quad+ \underset{S_{3}}{\underbrace{\left | \sum_{E\in \mathcal{E}^{H}}\frac{\gamma }{h}\int_{E}\left (\underline{ {{[\![ u]\!]}}}:\left\{C \right\}:\underline{{{[\![ v]\!]}} }+{[\![ u]\!]}\cdot \left\{D\right\}\cdot {[\![ v]\!]}\right )\text{ds} \right |}} . \\
    \end{aligned}
\end{equation*}
Notice that 
\begin{equation*}
    \begin{aligned}
    S_{1}& \leq\sum_{K\in \mathcal{T}^{H}}\left |\int_{K} \sigma \left (  u \right ):\epsilon \left ( v \right )\text{dx} \right |,\\
    S_{2}&\leq \sum_{E\in \mathcal{E}^{H}}\left |\int_{E}\left (\left\{\sigma\left( u\right) \right\} :\underline{{[\![ v]\!]}}+\underline{{{[\![u]\!]}}}: \left\{\sigma\left( v\right) \right\} \right)\text{ds} \right |, \\
    S_{3} &\leq  \sum_{E\in \mathcal{E}^{H}}\frac{\gamma }{h}\left |\int_{E}\left (\underline{ {{[\![ u]\!]}}}:\left\{C \right\}:\underline{{{[\![ v]\!]}} }+{[\![ u]\!]}\cdot \left\{D\right\}\cdot {[\![ v]\!]}\right )\text{ds}\right |  .
    \end{aligned}
\end{equation*}
The first term $S_{1}$ is easily estimated as follows:
\begin{equation*}
    S_{1}\leq \left ( \sum_{K\in \mathcal{T}^{H}} \int_{K}\epsilon \left ( u \right ):C:\epsilon \left ( u \right )\text{dx}\right )^{\frac{1}{2}}\left ( \sum_{K\in \mathcal{T}^{H}}\int_{K} \epsilon \left ( v \right ):C:\epsilon \left ( v \right )\text{dx}\right )^{\frac{1}{2}}.
\end{equation*}
By the arithmetic-geometric mean inequality, the third term $S_{3}$ is also easily estimated as follows:
\begin{equation*}
\begin{aligned}
    S_{3}&\leq  \left ( \sum_{E\in \mathcal{E}^{H}}\frac{\gamma }{h}\int_{E}\left (\underline{ {{[\![ u]\!]}}}:\left\{C \right\}:\underline{{{[\![ u]\!]}} }\right ) \text{ds}\right )^{\frac{1}{2}}\left ( \sum_{E\in \mathcal{E}^{H}}\frac{\gamma }{h}\int_{E}\left (\underline{ {{[\![ v]\!]}}}:\left\{C \right\}:\underline{{{[\![ v]\!]}} }\right )  \text{ds}\right )^{\frac{1}{2}}\\
    &\quad + \left ( \sum_{E\in \mathcal{E}^{H}}\frac{\gamma }{h}\int_{E}\left ( {{[\![ u]\!]}}:\left\{D \right\}:{{[\![ u]\!]} }\right ) \text{ds}\right )^{\frac{1}{2}}\left ( \sum_{E\in \mathcal{E}^{H}}\frac{\gamma }{h}\int_{E}\left ({{[\![ v]\!]}}\cdot \left\{D \right\}\cdot {{[\![ v]\!]} }\right )  \text{ds}\right )^{\frac{1}{2}}\\
    & \leq \left ( \sum_{E\in \mathcal{E}^{H}}\frac{\gamma }{h}\int_{E}\left (\underline{ {{[\![ u]\!]}}}:\left\{C \right\}:\underline{{{[\![ u]\!]}} } +{{[\![ u]\!]}}\cdot \left\{D \right\}\cdot {{[\![ u]\!]} }\right ) \text{ds}\right )^{\frac{1}{2}}\\
    &\quad \times \left ( \sum_{E\in \mathcal{E}^{H}}\frac{\gamma }{h}\int_{E}\left (\underline{ {{[\![ v]\!]}}}:\left\{C \right\}:\underline{{{[\![ v]\!]}} }+{{[\![ v]\!]}}\cdot \left\{D \right\}\cdot{{[\![ v]\!]} }\right )  \text{ds}\right )^{\frac{1}{2}}.
\end{aligned}
\end{equation*}
For $S_{2},$ again using the Cauchy-Schwarz inequality and  (\ref{eq:trace}),  we can estimate it as follows:
\begin{equation*}
    \begin{aligned}
    S_{2} &\leq  \left (\frac{h}{\gamma } \sum_{E\in \mathcal{E}^{H}}\int_{E}  \left ( \left \{ \sigma \left (  u \right )n\right \} \right )^{2}    \text{ds} \right )^{\frac{1}{2}}\left (\sum_{E\in \mathcal{E}^{H}}\frac{\gamma }{h}\int_{E}\underline{{{[\![ v]\!]}} }:\underline{{{[\![ v]\!]}} }\text{ds}\right )^{\frac{1}{2}}\\
    &\quad +\left (\frac{h}{\gamma } \sum_{E\in \mathcal{E}^{H}}\int_{E}  \left ( \left \{ \sigma \left (  v\right )n\right \} \right )^{2}   \text{ds} \right )^{\frac{1}{2}}\left (\sum_{E\in \mathcal{E}^{H}}\frac{\gamma }{h}\int_{E}\underline{{{[\![ u]\!]}} }:\underline{{{[\![ u]\!]}} }\text{ds}\right )^{\frac{1}{2}}\\
    &\leq \left (\frac{h}{\gamma c_{m_0}} \sum_{K\in \mathcal{T}^{H}}\int_{\partial K}\left ( \sigma \left (  u\right ) n_{\partial K} \right )^{2}\text{dx} \right )^{\frac{1}{2}}\left (\sum_{E\in \mathcal{E}^{H}}\frac{\gamma }{h} \int_{E}\underline{{{[\![ v]\!]}} }:C:\underline{{{[\![ v]\!]}} }\text{ds}\right )^{\frac{1}{2}}\\
    &\quad +\left (\frac{h}{\gamma c_{m_0}} \sum_{K\in \mathcal{T}^{H}}\int_{\partial K}\left ( \sigma \left (  v \right ) n_{\partial K} \right )^{2}\text{dx} \right )^{\frac{1}{2}}\left (\sum_{E\in \mathcal{E}^{H}}\frac{\gamma }{h} \int_{E}\underline{{{[\![ u]\!]}} }: C:\underline{{{[\![ u]\!]}} }\text{ds}\right )^{\frac{1}{2}}\\
    &\leq \left (\frac{Ac_{m_1}^{2}}{\gamma c_{m_0}^{2}} \right )^{\frac{1}{2}}\left ( \sum_{K\in \mathcal{T}^{H}}\int_{ K}\varepsilon \left ( u \right ):C:\varepsilon \left ( u \right )\text{dx}  \right )^{\frac{1}{2}}\left (\sum_{E\in \mathcal{E}^{H}}\frac{\gamma }{h} \int_{E}\underline{{{[\![ v]\!]}} }:C:\underline{{{[\![ v]\!]}} }\text{ds}\right )^{\frac{1}{2}}\\
   &\quad + \left (\frac{Ac_{m_1}^{2}}{\gamma c_{m_0}^{2}} \right )^{\frac{1}{2}}\left ( \sum_{K\in \mathcal{T}^{H}}\int_{ K}\varepsilon \left ( v \right ):C:\varepsilon \left ( v \right )\text{dx}  \right )^{\frac{1}{2}}\left (\sum_{E\in \mathcal{E}^{H}}\frac{\gamma }{h} \int_{E}\underline{{{[\![ u]\!]}} }:C:\underline{{{[\![ u]\!]}} }\text{ds}\right )^{\frac{1}{2}}.
    \end{aligned}
\end{equation*}
Then collecting above estimations, if $\gamma \geq \frac{A c_{m_1}^{2}}{ c_{m_0}^{2}}$ , we have $$ \left|a_{\text{DG}}\left(u, v\right)\right| \leq 2\|u\|_{\text{DG}}\|v\|_{\text{DG}}.$$ (\ref{equ:lemma4.1-1}) has been proved.
\end{proof}

To conclude, we must prove that the global basis functions are indeed localizable.
For each coarse block $K$, $B$ is a bubble function satisfies
\begin{equation*}
\begin{split}
& B\left (x \right )>0,\  \forall \  x \in \text{int}\left ( K \right ), \\
& B\left ( x \right )=0,\ \forall \  x \in \partial K. 
\end{split}
\end{equation*}
We  take $B=\Pi_{j} \chi_{j}$ where the product is taken over all vertices $j$ on $\partial K$ and define the  constant
\begin{equation*}
C_{\pi}=\sup _{K \in \mathcal{T}^{H}, \mu \in W_{\text {aux }}} \frac{\int_{K} \tilde{\kappa} \mu^{2}}{\int_{K} B \tilde{\kappa} \mu^{2}}.
\end{equation*}
In the following, we will prove a technical result in Lemma~\ref{lemmasupp}. This is a surjectivity result of the projection operator $\pi$.
It says that for any function $\psi$ in the auxiliary space $W_{\text{aux}}$, we can find a function $\phi$ in the DG space $V_h$ such that the DG norm of the function $\phi$ is controlled by the norm of the function $\psi$. Also, the support of the function $\phi$ is contained in the support of the function $\psi$.

\begin{lemma}\label{lemmasupp}
Given $\psi \in W_{\text{aux}} $, we could find $\phi \in V_{h}$, such that
$$\pi \left(\phi  \right)=\psi , \quad \left \| \phi  \right \|_{\text{DG}}^{2}\leq M\left \| \psi  \right \|_{b}^{2}, \quad \text{supp}\left ( \phi  \right )\subset \text{supp}\left ( \psi  \right ) .$$
We write $M=C_{T}C_{\pi}\left(1+c_{2}\right),$ where $C_{T}$ is the maximum number of vertices over all coarse elements.
\end{lemma}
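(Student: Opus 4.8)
The plan is to construct $\phi$ element by element, using the bubble function $B$ to localize. Fix $\psi \in W_{\text{aux}}$; since $W_{\text{aux}} = \bigoplus_j W_{\text{aux}}(K_j)$, write $\psi = \sum_j \psi^{(j)}$ with $\psi^{(j)} \in W_{\text{aux}}(K_j)$ supported on $K_j$. It suffices to handle each $K = K_j$ separately and then sum, because the supports $K_j$ have bounded overlap (in fact are essentially disjoint up to coarse edges) and the DG norm is a sum of element-wise and edge-wise contributions. So I would fix one coarse block $K$ and one $\mu := \psi^{(j)} \in W_{\text{aux}}(K)$, and look for $\phi^{(j)} \in V_h(K)$ (extended by zero outside $K$, which is legitimate in the DG space $V_h$).

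For a single block, I would define $\phi^{(j)}$ as the solution of a local variational problem on $K$ designed so that $\pi(\phi^{(j)}) = \mu$ while minimizing (a multiple of) the local elastic energy. Concretely, since $\pi$ restricted to $V_h(K)$ maps onto $W_{\text{aux}}(K)$, one natural choice is $\phi^{(j)} = \operatorname{argmin}\{a_K(\phi,\phi) : \phi \in V_h(K),\ \pi_K(\phi) = \mu\}$, but to get the \emph{explicit} constant $M$ involving $C_\pi$ and $c_2$ I would instead take the ``bubble'' ansatz: let $\phi^{(j)}$ be determined by $\int_K \tilde\kappa\, \phi^{(j)}\, w = \int_K \tilde\kappa\, B\, \hat\mu\, w$ type relations, or more directly use that $B\mu$ has the right projection up to an invertible change of basis on $W_{\text{aux}}(K)$. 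The key point is the estimate
\[
\int_K \tilde\kappa\, |\phi^{(j)}|^2 \le C_\pi \int_K \tilde\kappa\, B\, |\mu|^2 \quad\text{is reversed; what we need is}\quad \int_K \tilde\kappa\,|\mu|^2 \le C_\pi \int_K B\,\tilde\kappa\,|\mu|^2,
\]
which is exactly the definition of $C_\pi$, allowing us to pass from the bubble-weighted norm back to the plain $b$-norm. Then the spectral bound $a_K(\mu,\mu) \le c_2\, b_K(\mu,\mu)$ (valid for $\mu \in W_{\text{aux}}(K)$ since it is spanned by eigenfunctions with eigenvalues at most $c_2$) controls the elastic energy of $\phi^{(j)}$ by $c_2 \|\mu\|_{b,K}^2$, and the normalization contributes the ``$1+$'' in $1+c_2$. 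Since $\phi^{(j)}$ vanishes on $\partial K$, its matrix and vector jumps across coarse edges are supported only where the neighboring block's contribution lives, so upon summing, the edge terms in $\|\cdot\|_{\text{DG}}^2$ are also controlled — this is where the factor $C_T$ (max number of vertices, bounding the overlap of the partition-of-unity supports and hence of the blocks contributing to a given edge) enters.

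I would then assemble: $\phi := \sum_j \phi^{(j)}$, note $\operatorname{supp}(\phi) \subset \bigcup_j \operatorname{supp}(\psi^{(j)}) = \operatorname{supp}(\psi)$, that $\pi(\phi) = \sum_j \pi(\phi^{(j)}) = \sum_j \psi^{(j)} = \psi$ (using that $\pi$ acts block-diagonally), and $\|\phi\|_{\text{DG}}^2 = \sum_j \|\phi^{(j)}\|_{\text{DG}}^2 \le \sum_j C_T C_\pi (1+c_2)\|\psi^{(j)}\|_b^2 = M\|\psi\|_b^2$ after the overlap bookkeeping.

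\textbf{Main obstacle.} The delicate step is the single-block construction of $\phi^{(j)}$ together with the sharp energy bound: one must exhibit an explicit $\phi^{(j)} \in V_h(K)$ with prescribed projection whose elastic energy is bounded by $c_2 \|\mu\|_{b,K}^2$ up to the bubble-constant $C_\pi$, and simultaneously verify that its $b$-norm is comparable to $\|\mu\|_{b,K}$ (so that the ``$1$'' in $1+c_2$ is justified) — this requires carefully choosing $\phi^{(j)}$ in the span of $\{B\psi_i^j\}$ or solving the right auxiliary finite-dimensional system, and checking that the resulting matrix (whose inverse produces $\phi^{(j)}$ from $\mu$) is controlled precisely by $C_\pi$. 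The bookkeeping of edge terms under summation, while conceptually routine, must be done with the correct combinatorial constant $C_T$; I expect that to be the second subtlety.
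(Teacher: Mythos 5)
Your overall plan --- split $\psi$ into its block components, work on a single coarse element $K_j$, use the bubble function $B$ and the constant $C_\pi$ to pass between the bubble-weighted and plain $b$-norms, and use the spectral bound $a_j(\psi,\psi)\le c_2\, b_j(\psi,\psi)$ to produce the factor $1+c_2$ --- is the same route the paper takes, but you stop exactly at the step that constitutes the proof. You correctly note that $B\mu$ does not have the prescribed projection and would need an ``invertible change of basis'' or ``the right auxiliary system,'' and you then declare this the main obstacle without resolving it. In the paper this is resolved by using $\phi=B\psi$ not as the final answer but as the test function that verifies the inf-sup/coercivity condition $b_j\left(B\psi,\psi\right)\ge C_\pi^{-1}\left\|\psi\right\|_{b\left(K_j\right)}^2$ (immediate from the definition of $C_\pi$); combined with the energy estimate for $B\psi$ and the eigenvalue bound $\left\|\psi\right\|_{a_{\text{DG}}\left(K_j\right)}^2\le c_2\left\|\psi\right\|_{b\left(K_j\right)}^2$, this gives well-posedness of the constrained local minimization (\ref{equ:argmin}) on $K_j$, and it is the solution $\phi$ of that saddle-point problem --- which by construction satisfies $\pi(\phi)=\psi$ and $\operatorname{supp}(\phi)\subset K_j$ --- that carries the bound $\left\|\phi\right\|_{\text{DG}}^2\le M\left\|\psi\right\|_b^2$ with $M=C_TC_\pi\left(1+c_2\right)$. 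Alternatively one could show directly that the map $\mu\mapsto\pi(B\mu)$ is symmetric positive definite on $W_{\text{aux}}(K_j)$ with smallest eigenvalue at least $C_\pi^{-1}$, but your sketch does not carry out either argument, so the lemma is not actually established.

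Second, your accounting of the constant $C_T$ is wrong. Each block contribution vanishes on $\partial K_j$ and is extended by zero, so its vector and matrix jumps across every coarse edge vanish; the edge terms in $\left\|\cdot\right\|_{\text{DG}}^2$ contribute nothing, and no combinatorial ``overlap'' constant is needed when summing over blocks. In the paper $C_T$ enters through the interior energy estimate: with $B=\prod_j\chi_j$, the product taken over the (at most $C_T$) vertices of $K$, one has $\nabla\left(B\psi\right)=\psi\nabla B+B\nabla\psi$ and $\left|\nabla B\right|^2\le C_T\sum_j\left|\nabla\chi_j\right|^2$, so the $\psi\nabla B$ term is absorbed into the $b$-norm (whose weight is $k_1=\sum_i k_2\left|\nabla\chi_i\right|^2$) at the cost of the factor $C_T$. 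Relocating $C_T$ to an edge-bookkeeping argument, as you propose, would not reproduce the stated constant and addresses a term that is identically zero.
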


\begin{proof}
Let $\psi \in W_{\text{aux}}\left ( K_{j} \right ).$ Then the solution of (\ref{minimization}) is found by the following steps.
First, we find $\phi \in V_{h}\left ( K_{j} \right ), {\psi }'\in W_{\text{aux}}\left ( K_{j} \right )$ such that
\begin{equation}
\begin{aligned}
 a_{j}\left ( \phi ,w \right )+b_{j}\left ( w,{\psi }' \right )&=0,\quad\forall\  w\in V_{h}\left ( K_{j} \right ),\quad\\
b_{j}\left ( \phi ,\eta  \right )&=b_{j}\left (\psi  ,\eta  \right ),\quad\forall \ \eta \in W_{\text{aux}}\left ( K_{j} \right ).\label{equ:argmin}
\end{aligned}
\end{equation}
Note that the existence of (\ref{minimization}) is equivalent to find $\phi \in V_{h}\left(K_{j}\right)$ such that
   $$ b_{j}\left(\phi, \psi\right) \geq   \beta     \left \| \psi  \right \|_{b\left ( K_{j} \right )}^{2}, \quad \left \| w \right \|_{a_{\rm DG}\left ( K_{j} \right )}\geq     \beta      \left \| \psi  \right \|_{b\left ( K_{j} \right )}^{2},$$
where $\beta $  is a constant to be determined.

Note that $\psi$ is supported in $K_{j}$. Let $B$ is a bubble function and $\phi=B\psi$, then we obtain
$$b_{j}\left ( \phi ,\psi  \right )\geq C_{\pi}^{-1}\left \| \psi  \right \|_{b\left ( K_{j} \right )}^{2}.$$
Since
$$\nabla \left ( B\psi  \right )=\psi \nabla B+B\nabla \psi ,\quad |B|\leq 1,\quad |\nabla B|^{2}\leq C_{T}\sum_{j}|\nabla \chi _{j}|^{2},$$
we have 
\begin{equation}
    \left \| \phi  \right \|_{a_{\text{DG}}\left ( K_{j} \right )}^{2}=\left \| B\psi  \right \|_{a_{\text{DG}}\left ( K_{j} \right )}^{2}\leq C_{T} C_{\pi}\left \| \phi  \right \|_{a_{\text{DG}}\left ( K_{j} \right )}\left ( \left \| \phi  \right \|_{a_{\text{DG}}\left ( K_{j} \right )}+\left \| \phi  \right \|_{b\left ( K_{j} \right )} \right ).
\end{equation}
Finally, using the (\ref{abf}), we note that
\begin{equation*}
\left \| \psi  \right \|_{a_{\text{DG}}\left ( K_{j} \right )}\leq \left ( \underset{1\leq i\leq G_{j}}{\text{max}} \lambda _{i}^{j}\right )\left \| \phi  \right \|_{b\left ( K_{j} \right )}.
\end{equation*}
This proves that (\ref{minimization}) has a unique solution. We see that $\phi$ and $\psi$ also satisfy (\ref{equ:argmin}) respectively.  Hence we are done to prove this lemma.
\end{proof}

Next we will prove the convergence of the method.
\subsection{Convergence Analysis}
In this part, we prove the convergence of our DG coupling method using the multiscale basis functions defined in (\ref{minimization}). 
First of all, in Lemma~\autoref{lemma:dgcoupling}   we will prove a localization result of the global multiscale basis functions defined in   (\ref{basisiob}).
In the result shown in (\ref{eq:local1}), we see that the difference between the local basis function and the global basis function 
is less than a constant $M_1$, which decays exponentially as the number of oversampling size $p$ increases.

\begin{lemma} \label{lemma:dgcoupling}
We consider an oversampling area $K_{j,p}$. That is, $K_{j,p}$ is obtained by extending  $K_{j}$ by $p$ coarse grid layers. Let $ \psi _{i}^{j}\in W_{\text{aux}}$ be the auxiliary multiscale basis function, $\phi _{i,\text{glo}}^{j} $ be the solution of   (\ref{basisiob}), and $\phi_{i,\text{ms}}^{j} $ be the solution of (\ref{minimization}). Then we have
\begin{equation}
      \left \| \phi _{i,\text{ms}}^{j}- \phi _{i,\text{glo}}^{j} \right \|_{\text{DG}}\leq M_{1}\left \| \psi _{i}^{j} \right \|_{b\left ( K_{j} \right )}^{2}, \label{eq:local1}
 \end{equation}
 where $M_{1}=4M \left ( c_{1}+\frac{1}{c_{1}} \right )  \left [ 1+\left ( \frac{M}{2}+c_{1}^{\frac{1}{2}} \right )^{-1} \right ] ^{\frac{1}{2}-p}.$ 
\end{lemma}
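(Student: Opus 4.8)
The plan is to run the standard CEM-GMsFEM localization argument, now for the IPDG bilinear form. Two facts drive everything. First, writing the constrained minimizations (\ref{basisiob}) and (\ref{minimization}) with Lagrange multipliers, and using that $b(v,\mu)=0$ for every $v\in\ker\pi$ and $\mu\in W_{\text{aux}}$ (so the multiplier terms drop out), shows that $\phi_{i,\text{glo}}^j$ is $a_{\text{DG}}$-orthogonal to $\ker\pi$ in $V_h$ and $\phi_{i,\text{ms}}^j$ is $a_{\text{DG}}$-orthogonal to $\ker\pi\cap V_h(K_{j,p})$; in particular $a_{\text{DG}}(\phi_{i,\text{glo}}^j-\phi_{i,\text{ms}}^j,v)=0$ for all $v\in V_h(K_{j,p})$ with $\pi(v)=0$. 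Second, since $\phi_{i,\text{glo}}^j$ minimizes the $a_{\text{DG}}$-energy over $\{\phi\in V_h:\pi(\phi)=\psi_i^j\}$, comparing with the competitor supplied by Lemma~\ref{lemmasupp} and invoking Lemma~\ref{lem:coer} gives $\|\phi_{i,\text{glo}}^j\|_{\text{DG}}^2\le4M\|\psi_i^j\|_b^2$. Throughout I pass between $\|\cdot\|_{\text{DG}}$ and $a_{\text{DG}}(\cdot,\cdot)$ via Lemma~\ref{lem:coer}, and I use a cutoff family $\{\xi_m\}$ subordinate to the coarse grid with $\xi_m\equiv1$ on $K_{j,m}$, $\xi_m\equiv0$ on $\Omega\setminus K_{j,m+1}$, $0\le\xi_m\le1$, $|\nabla\xi_m|\lesssim H^{-1}$; since $V_h$ consists of fine-grid piecewise polynomials I take the $\xi_m$ to be fine-grid nodal functions so that $\xi_m\phi\in V_h$ whenever $\phi\in V_h$.

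\emph{Step 1: exponential decay of the global basis.} Put $\phi:=\phi_{i,\text{glo}}^j$ and $X_m:=\|\phi\|_{\text{DG},\,\Omega\setminus K_{j,m}}^2$; the goal is a recursion $X_{m+1}\le\Lambda X_m$ with an explicit $\Lambda<1$. Since $\pi(\phi)=\psi_i^j$ is supported in $K_j$, the function $\pi\big((1-\xi_m)\phi\big)$ is supported only on the one-coarse-layer transition region $K_{j,m+1}\setminus K_{j,m}$ (for $m\ge1$); by Lemma~\ref{lemmasupp} there is a correction $z_m\in V_h$ supported in that layer with $\pi\big((1-\xi_m)\phi-z_m\big)=0$ and $\|z_m\|_{\text{DG}}^2\le M\|\pi((1-\xi_m)\phi)\|_b^2$. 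Testing the Galerkin orthogonality of $\phi$ against $v=(1-\xi_m)\phi-z_m\in\ker\pi$ gives $a_{\text{DG}}(\phi,(1-\xi_m)\phi)=a_{\text{DG}}(\phi,z_m)$; expanding the left side, everything outside $K_{j,m+1}$ contributes $X_{m+1}$ while all remaining terms, and the right side, live on the transition layer. On that layer $\pi(\phi)$ vanishes block by block, so the spectral estimate from (\ref{abf}) bounds the $b$-norm of $\phi$ there by $c_1^{-1/2}$ times its energy there, and since $\pi$ is a $b$-contraction and $0\le1-\xi_m\le1$ we get $\|\pi((1-\xi_m)\phi)\|_b\lesssim c_1^{-1/2}\|\phi\|_{\text{DG},\,\text{layer}}$. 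Combining this with the Cauchy--Schwarz inequality, the trace-type bound (\ref{eq:trace}) for the $\gamma/h$-weighted edge terms produced by $\xi_m$, and Lemma~\ref{lem:coer}, the transition-layer contributions are controlled by $\|\phi\|_{\text{DG},\,\text{layer}}^2\le X_m-X_{m+1}$; the resulting inequality rearranges to $X_{m+1}\le\Lambda X_m$ with $\Lambda=(1+(M/2+c_1^{1/2})^{-1})^{-1}$. Iterating from $m=p$ down to $m=0$ and using $X_0\le4M\|\psi_i^j\|_b^2$ yields $\|\phi_{i,\text{glo}}^j\|_{\text{DG},\,\Omega\setminus K_{j,p}}^2\lesssim\Lambda^{\,p}\|\psi_i^j\|_b^2$.

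\emph{Step 2: from decay to the localization bound.} I would build an admissible competitor for (\ref{minimization}): set $w:=\xi_{p-1}\phi_{i,\text{glo}}^j+z$, where $z\in V_h(K_{j,p})$ is the Lemma~\ref{lemmasupp} correction, supported in $K_{j,p}\setminus K_{j,p-1}$, chosen so that $\pi(w)=\psi_i^j$. Exactly as in Step 1, both $\|z\|_{\text{DG}}$ and $\|(1-\xi_{p-1})\phi_{i,\text{glo}}^j\|_{\text{DG}}$ are bounded by $\Lambda^{\,(p-1)/2}\|\psi_i^j\|_b$ times a constant depending only on $M$ and $c_1$, hence so is $\|\phi_{i,\text{glo}}^j-w\|_{\text{DG}}$. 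Now $\phi_{i,\text{ms}}^j-w\in V_h(K_{j,p})$ with $\pi(\phi_{i,\text{ms}}^j-w)=0$, so applying the Galerkin orthogonality of the first paragraph with $v=\phi_{i,\text{ms}}^j-w$ gives $a_{\text{DG}}(\phi_{i,\text{ms}}^j-w,\phi_{i,\text{ms}}^j-w)=a_{\text{DG}}(\phi_{i,\text{glo}}^j-w,\phi_{i,\text{ms}}^j-w)$, whence Lemma~\ref{lem:coer} yields $\|\phi_{i,\text{ms}}^j-w\|_{\text{DG}}\le4\|\phi_{i,\text{glo}}^j-w\|_{\text{DG}}$ (the energy-minimality of $\phi_{i,\text{ms}}^j$, through $a_{\text{DG}}(\phi_{i,\text{ms}}^j,\phi_{i,\text{ms}}^j)\le a_{\text{DG}}(w,w)$, is an alternative route to the same conclusion). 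Finally the triangle inequality $\|\phi_{i,\text{glo}}^j-\phi_{i,\text{ms}}^j\|_{\text{DG}}\le\|\phi_{i,\text{glo}}^j-w\|_{\text{DG}}+\|w-\phi_{i,\text{ms}}^j\|_{\text{DG}}$ and a bookkeeping of the constants ($4M$ from $X_0$, the factor $c_1+c_1^{-1}$ from the spectral bounds, and $\Lambda^{\,p}$ with an extra $\Lambda^{-1/2}$ from the offset layers) are arranged to give exactly $M_1=4M(c_1+c_1^{-1})[1+(M/2+c_1^{1/2})^{-1}]^{1/2-p}$, i.e.\ (\ref{eq:local1}).

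\emph{Main obstacle.} The delicate point, absent in the continuous-Galerkin CEM analysis, is controlling the consistency flux and penalty parts of $a_{\text{DG}}$ under multiplication by the cutoff. The product $\xi_m\phi$ modifies the jumps $[\![\,\cdot\,]\!]$ and $\underline{[\![\,\cdot\,]\!]}$ on the interior coarse edges crossing the transition layer, and these are amplified by $\gamma/h$; likewise the flux $\{\sigma(\xi_m\phi)\}$ must be re-estimated. I will need to show that, on that one-layer region, the $\gamma/h$-weighted jump of $\xi_m\phi$ and the associated flux terms are still dominated --- up to the coercivity constants of Lemma~\ref{lem:coer} and the trace inequality (\ref{eq:trace}) --- by the local DG energy of $\phi$ there, so that the recursion in Step 1 closes with the stated base $\Lambda$ and the penalty parameter $\gamma$ cancels correctly. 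Keeping all these constants tight enough that the final exponent comes out as $1/2-p$ is where most of the care goes; a lesser technical point is checking that the cutoffs can be chosen inside $V_h$ and that the supports of the Lemma~\ref{lemmasupp} corrections remain within a single coarse layer.
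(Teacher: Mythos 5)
Your plan is correct and follows essentially the same route as the paper's proof: Galerkin orthogonality of $\phi_{i,\text{glo}}^{j}-\phi_{i,\text{ms}}^{j}$ against $\pi$-kernel functions in $V_h(K_{j,p})$, exponential decay established by a layer-by-layer recursion that combines cutoff functions, the Lemma~\ref{lemmasupp} correction supported in the transition layer, and the spectral bound through $c_1$, and finally a cutoff-plus-correction competitor with the triangle inequality, yielding the same base $\bigl[1+(\tfrac{M}{2}+c_1^{1/2})^{-1}\bigr]^{-1}$. The only organizational difference is that you run the decay recursion on $\phi_{i,\text{glo}}^{j}$ directly while the paper runs it on the kernel function $g=\phi^j_{i,\cdot}-\widetilde\phi_i^j$, which is immaterial; the DG-specific cutoff/jump issue you flag is likewise handled (rather loosely) in the paper's Part 2.
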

\begin{proof}
By using the solution of the minimization problems (\ref{basisiob}) and (\ref{minimization}), we can get
\begin{equation}
    a_{\text{DG}}\left ( \phi _{i,\text{ms}}^{j},w \right )+b\left (w, \left ({\psi }' \right ) _{i}^{j} \right)=0,\quad\forall \ w\in V_{h},
\end{equation}
and
\begin{equation}
    a_{\text{DG}}\left (  \phi _{i,\text{glo}}^{j},w \right )+b\left (w, \left ({\psi }' \right ) _{i,\text{glo}}^{j} \right )=0,\quad \forall\  w\in V_{h}\left ( K_{j,p} \right ),
\end{equation}
where $\left ({\psi }' \right ) _{i}^{j},\left ({\psi }' \right ) _{i,\text{glo}}^{j}\in W_{\text{aux}}.$
Substracting the above two equations and restricting $w\in V_{h}\left ( K_{j,p} \right ),$ we have $$ a_{\text{DG}}\left ( \phi _{i,\text{ms}}^{j},w \right )-a_{\text{DG}}\left (  \phi _{i,\text{glo}}^{j},w \right )+b\left (w, \left ({\psi }' \right ) _{i}^{j} \right)-b\left (w, \left ({\psi }' \right ) _{i,\text{glo}}^{j} \right )=0,$$
$$a_{\text{DG}}\left ( \phi _{i,\text{ms}}^{j}-  \phi _{i,\text{glo}}^{j},w \right )=0,\ \forall \ w\in V_{h}\left ( K_{j,p} \right ).$$

For $\psi_{i}^{j}\in W_{\text{aux}}$, using Lemma \ref{lemmasupp}, there exists a function $\widetilde{\phi} _{i}^{j}\in V_{h}$ such that 
$$\pi\left ( \widetilde{\phi} _{i}^{j} \right )=\psi_{i}^{j}, \quad \left \| \widetilde{\phi}_{i}^{j}  \right \|_{\text{DG}}^{2}\leq M\left \| \psi_{i}^{j} \right \|_{b}^{2}, \quad \text{supp}\left (\widetilde{\phi} _{i}^{j}  \right )\subset K_{j}.$$
Let $g=\phi _{i,\text{ms}}^{j}-\widetilde{\phi}_{i}^{j}.$ Note that $g\in \widetilde{V_{h}}$ and $\pi \left(g\right)=0.$
Here, we have $$V_{h}\left ( K_{j,p} \right )=H_{0}^{1}\left ( K_{j,p} \right ),\quad
\widetilde{V_{h}}=\left \{ w\in V_{h}|\pi\left ( w \right )=0 \right \}.$$
Therefore, for $l=  \phi _{i,\text{glo}}^{j}-\widetilde{\phi} _{i}^{j}   \in \widetilde{V_{h}},$ we have 
\begin{equation*}
\begin{aligned}
  \left \| \phi _{i,\text{ms}}^{j}- \phi _{i,\text{glo}}^{j} \right \|_{\text{DG}}^{2}&= a_{\text{DG}}\left ( \phi _{i,\text{ms}}^{j}- \phi _{i,\text{glo}}^{j},\phi _{i,\text{ms}}^{j}- \phi _{i,\text{glo}}^{j} \right )
 \\&= a_{\text{DG}}\left ( \phi _{i,\text{ms}}^{j}- \phi _{i,\text{glo}}^{j},\phi _{i,\text{ms}}^{j}-\widetilde{\phi} _{i}^{j}-  \phi _{i,\text{glo}}^{j}+\widetilde{\phi} _{i}^{j}\right )
 \\&=a_{\text{DG}}\left ( \phi _{i,\text{ms}}^{j}- \phi _{i,\text{glo}}^{j},g-l \right ) .
\end{aligned}
\end{equation*}
So, we obtain 
\begin{equation}
\left \| \phi_{i,\text{ms}}^{j}- \phi _{i,\text{glo}}^{j} \right \|_{\text{DG}}\leq \left\| g-l \right \|_{\text{DG}}.
\end{equation}

We will now estimate $\left \| \phi _{i,\text{ms}}^{j}- \phi _{i,\text{glo}}^{j} \right \|_{\text{DG}}$, that is, the difference between global multiscale basis functions $\phi _{i,\text{glo}}^{j}$  and localized multiscale basis functions $\phi _{i,\text{ms}}^{j}$. We will observe that the global multiscale basis functions decay and have tiny values outside of a sufficiently large oversampled area. In our proof, we shall employ a cutoff function.  Define a function
\begin{equation}
\begin{aligned}
\chi _{j}^{p+i,p}&=1 \quad \text{in} \quad K_{j,p},\\ 
\chi _{j}^{p+i,p}&=0 \quad \text{in} \quad \Omega \setminus  K_{j,p}.
\end{aligned}
\end{equation}
where $j>0,K_{j,p}\subset K_{j,p+1}.$ Since $g\in \widetilde{V_{h}},$ we have
$$b_{i}\left ( \chi _{j}^{p+1,p}g,\psi _{n}^{i} \right )=b_{i}\left ( g,\psi _{n}^{i} \right )=0, \ \forall\ n=1,2,...N.$$
Then $$\text{supp}\left ( \pi\left ( \chi _{j}^{p+1,p}g \right ) \right )\subset K_{j,p+1}\setminus K_{j,p}.$$
Using Lemma \ref{lemmasupp}, for $\pi\left ( \chi _{j}^{p+1,p}g \right )$, $ \exists \ {\psi }'\in V_h$, such that $$ \text{supp}\left ( {\psi }' \right )\subset K_{j,p+1}\setminus K_{j,p}, \quad
\pi\left ( {\psi }'-\chi _{j}^{p+1,p}g \right )=0,$$
and
\begin{equation}
\left \| {\psi }' \right \|_{a_{\text{DG}}\left ( K_{j,p+1}\setminus K_{j,p} \right )}
\leq \sqrt{M}\left \| \pi\left ( \chi _{j}^{p+1,p}g \right ) \right \|_{b\left ( K_{j,p+1}\setminus K_{j,p} \right )}.\label{equ:44}
\end{equation}
Remember that $\pi$ is a projection, restricting $\left \| \psi _{i}^{j} \right \|\leq 1$, then we note that
\begin{equation}
\begin{aligned}
\left \| \pi\left ( \chi _{j}^{p+1,p}g \right ) \right \|_{b\left ( K_{j,p+1}\setminus K_{j,p} \right )}
&=\left \| \sum_{j=1}^{N}\sum_{i=1}^{G_{j}}b_{j}\left ( \chi_{j} ^{p+1,p}g,\psi _{i}^{j}\right )\psi _{i}^{j} \right \|_{b\left ( K_{j,p+1}\setminus K_{j,p} \right )}\\
&\leq \left \| \chi _{j}^{p+1,p}g  \right \|_{b\left ( K_{j,p+1}\setminus K_{j,p} \right )}.\label{equ:45}
\end{aligned}
\end{equation}
Combining (\ref{equ:44}) and (\ref{equ:45}),  that is 
\begin{equation}
\left \| {\psi }' \right \|_{a_{\text{DG}}\left ( K_{j,p+1}\setminus K_{j,p} \right )}
\leq \sqrt{M}\left \| \chi _{j}^{p+1,p}g  \right \|_{b\left ( K_{j,p+1}\setminus K_{j,p} \right )}.
\end{equation}

Hence, let  $l={\psi }'+\chi _{j}^{p+1,p}g,$ then we have
\begin{equation}
\begin{aligned}
\left \| \phi _{i,\text{ms}}^{j}- \phi _{i,\text{glo}}^{j} \right \|_{\text{DG}}
&\leq \left \| g-l \right \|_{\text{DG}}\\
&\leq \left \| \left ( 1-\chi _{j}^{p+1,p} \right )g \right \|_{\text{DG}}+\left \| {\psi }' \right \|_{a_{\text{DG}}\left ( K_{j,p+1}\setminus K_{j,p} \right )}.\label{equ:1}
\end{aligned}
\end{equation}

Subsequently, we will estimate (\ref{equ:1}) in three parts, first estimating $\left \| \left ( 1-\chi _{j}^{p+1,p} \right )g \right \|_{\text{DG}}$ and $\left \| {\psi }' \right \|_{a_{\text{DG}} \left ( K_{j,p+1}\setminus K_{j,p} \right )}$ separately, then further estimating the first part's results in the second part, and finally obtaining our final estimate of the value between  $\phi _{i,\text{ms}}^{j}$ and $\phi _{i,\text{glo}}^{j} $ in the third part.\\
$\mathbf{Part1.}$ Next we estimate $\left \| \left ( 1-\chi _{j}^{p+1,p} \right )g \right \|_{\text{DG}}$.  By using $\nabla\left(\left(1-\chi_{j}^{p, p-1}\right) g\right)=-\nabla \chi_{j}^{p+1,p} g+\left(1-\chi_{j}^{p+1,p}\right) \nabla g$ and $0 \leq 1-\chi_{j}^{p+1,p} \leq 1$, we can get
\begin{equation}
\begin{aligned}
\left \| \left ( 1-\chi _{j}^{p+1,p} \right )g \right \|_{\text{DG}}^{2}
&\leq 2\left(\|g\|_{a_{D G\left(\Omega \backslash K_{j, p}\right)}}^{2}+\|g\|_{b\left(\Omega \backslash K_{j,p}\right)}^{2}\right) \\
&\leq 2\left ( 1+\frac{1}{c_{1}^{2}} \right )\|g\|_{a_{\text{DG}}\left ( \Omega \setminus K_{j,p} \right )}^{2}\\
&\leq 2\left ( c_{1}+\frac{1}{c_{1}} \right )^{2}\|g\|_{a_{\text{DG}}\left ( \Omega \setminus K_{j,p} \right )}^{2}
\end{aligned}
\end{equation}

We will estimate the second term $\left \| {\psi }' \right \|_{a_{\text{DG}} \left ( K_{j,p+1}\setminus K_{j,p} \right )}$. Since $\chi _{j}^{p+1,p}\leq 1,$  we have
\begin{equation}
\begin{aligned}
 \left \| {\psi }' \right \|_{a_{\text{DG}} \left ( K_{j,p+1}\setminus k_{j,p} \right )}
 &\leq \sqrt{M} \left \| \chi _{j}^{p+1,p}g \right \|_{b\left ( K_{j,p+1}\setminus K_{j,p} \right )}\\
 &\leq\frac{ \sqrt{M} }{c_{1}}\left \| g\right \|_{a_{\text{DG}}\left ( K_{j,p+1}\setminus K_{j,p} \right )}.\label{equ:3}
\end{aligned}
\end{equation}

Combining the estimates (\ref{equ:1})-(\ref{equ:3}), we obtain
\begin{equation}
\left \| \phi _{i,\text{ms}}^{j}- \phi _{i,\text{glo}}^{j} \right \|_{\text{DG}}  \leq 2\sqrt{M} \left ( c_{1}+\frac{1}{c_{1}} \right )\left \| g \right \|_{a_{\text{DG}}\left ( \Omega \setminus K_{j,p} \right )}\label{part1}.  
\end{equation}
$\mathbf{Part2.}$ Then we estimate $\left \| g \right \|_{a_{\text{DG}}\left ( \Omega \setminus K_{j,p} \right )}^{2}$. We will use the recursion method, so we first need to prove that the following formula
\begin{equation}
   \left \| g \right \|_{a_{\text{DG}}\left ( \Omega \setminus K_{j,p} \right )}^{2}\leq \left [ 1-\left ( 1+\frac{M}{2}+c_{1}^{\frac{1}{2}} \right )^{-1} \right ]\left \| g \right \|_{a_{\text{DG}}\left ( \Omega \setminus K_{j,p-1} \right )}^{2}\label{part2}.
\end{equation}
Let $\varsigma =1-\chi _{j}^{p,p-1}$. Then we obtain
$$\varsigma \equiv 1 \quad \text{in} \ \Omega \setminus K_{j,p}, \quad \varsigma \equiv 0\ \text{otherwise}.$$
Hence, we have
\begin{equation}
\begin{aligned}
 \left \| g \right \|_{a_{\text{DG}}\left(\Omega \setminus K_{j,p}\right)}^{2}
&\leq \sum_{K\in \mathcal{T} ^{H}}\int_{K}\left ( \varepsilon \left ( g \right ):C:  \varepsilon \left ( g \right )  \right )\text{dx} +\frac{\gamma }{h}\sum_{E\in \mathcal{E} ^{H}}\int_{E}\underline{{[\![ g]\!]}}:C:\underline{{[\![ g]\!]}}+[\![g]\!]\cdot D\cdot [\![g]\!]\text{ds} \\
&\leq \sum_{K\in \Omega }\int_{K}\varsigma ^{2}\varepsilon \left ( g \right ):C:\varepsilon \left ( g \right )\text{dx}+\frac{\gamma }{h}\sum_{E\in \mathcal{E} ^{H}}\int_{E}\underline{{[\![ g]\!]}}:C:\underline{{[\![\varsigma ^{2} g]\!]}}+[\![g]\!]\cdot D\cdot[\![\varsigma ^{2}g]\!]\text{ds} \\
&=\sum_{K\in \Omega }\int_{K}\varepsilon \left ( g \right ):C: \varepsilon  \left ( \varsigma ^{2} g\right )\text{dx}-2\sum_{K\in \Omega }\int_{K}\varsigma g \varepsilon \left ( \varsigma  \right ):C:  \varepsilon \left ( g \right )\text{dx}\\
&\quad +\frac{\gamma }{h}\sum_{E\in \mathcal{E} ^{H}}\int_{E}\underline{{[\![ \varsigma g]\!]}}:C:\underline{{[\![\varsigma  g]\!]}}+[\![\varsigma g]\!]\cdot D\cdot [\![\varsigma g]\!]\text{ds}.
\end{aligned}
\end{equation}
Again using Lemma \ref{lemmasupp}, there exists$ \ \gamma_{1}\in V$ such that $$\pi\left ( \gamma _{1} \right )=\pi\left ( \varepsilon \left ( \varsigma ^{2}g \right ) \right ), \quad \text{supp}\left ( \gamma _{1} \right )\subset \text{supp}\left ( \pi\left ( \varsigma ^{2}g \right ) \right ).$$
For $K_{s}\subset \Omega \setminus K_{j,p}$, since $\varsigma \equiv 1 $ on $K_{s}$, we note that
$$b_{s}\left ( \varsigma ^{2}g,\psi _{n}^{s} \right )=0,\ \forall \ n=1,2,...,N.$$
On the other hand, for any coarse element $K_{s}\subset K_{j,p-1},$ $\varsigma \equiv 0,$ then we also have
$$b_{s}\left ( \varsigma ^{2}g,\psi _{n}^{s} \right )=0, \ \forall \ n=1,2,...,N.$$
Then by the definition of $g$, we obtain
$$a_{\text{DG}}\left ( g,\varsigma ^{2}g-\gamma _{1} \right )=a_{\text{DG}}\left ( \phi _{i,\text{ms}}^{j},\varsigma ^{2}g-\gamma_{1} \right ).$$
Recall the multiscale basis function $\phi_{i, \text{ms}}^{j}$ satisfies the constraint energy minimization problem\\
	 $a_{\text{DG}}\left ( \phi _{i,\text{ms}}^{j},\varsigma ^{2}g-\gamma _{1}\right )=0,$ therefore, 
\begin{equation}
    \begin{aligned}
 \sum_{K\in \Omega }\int_{K} \varepsilon \left ( g \right ):C: \varepsilon \left ( \varsigma ^{2}g \right )\text{dx}
 &=\sum_{K\in \Omega }\int_{K}\varepsilon \left ( g \right ):C:\varepsilon \left ( \gamma _{1} \right )\text{dx}\\
&\leq M\left \| g \right \|_{a_{\text{DG}}\left ( K_{j,p}\setminus  K_{j,p-1}\right )}\left \| \pi\left ( \varsigma ^{2}g \right ) \right \|_{b\left ( K_{j,p}\setminus K_{j,p-1} \right )}.   
    \end{aligned}
\end{equation}
For $K\subset K_{j,p}\setminus K_{j,p-1},$  $\pi \left(g\right)=0,$ then we have
\begin{equation}
\left \| \pi\left ( \varsigma ^{2}g \right ) \right \|_{b\left ( K \right )}^{2}\leq \left \|  \varsigma ^{2}g \right \|_{b\left ( K \right )}^{2}\leq\frac{1}{c_{1}}\int_{K} \varepsilon \left ( g \right ):C :\varepsilon \left ( g \right )\text{dx},\label{equ:54}
\end{equation}
then there holds
\begin{equation}
    \begin{aligned}
     \left \| \pi\left ( \varsigma ^{2}g \right ) \right \|_{b\left ( K_{j,p}\setminus K_{j,p-1} \right )}\leq c_{1}^{-\frac{1}{2}}\left \| g \right \|_{a_{\text{DG}}\left ( K_{j,p}\setminus K_{j,p-1} \right )}.\label{equ:55}
    \end{aligned}
\end{equation}
Next  $2\sum_{K\in \Omega }\int_{K}\varsigma g \varepsilon \left ( \varsigma  \right ):C:  \varepsilon \left ( g \right )\text{dx}$ can be bounded as
\begin{equation}
    \begin{aligned}
2\sum_{K\in \Omega }\int_{K}\varsigma g \varepsilon \left ( \varsigma  \right ):C:  \varepsilon \left ( g \right )\text{dx} &\leq 2\left \| g \right \|_{b\left ( K_{j,p}\setminus K_{j,p-1} \right )} \left \| g \right \|_{a_{\text{DG}}\left ( K_{j,p}\setminus K_{j,p-1} \right )}\\
& \leq 2c_{1}^{-\frac{1}{2}}\left \| g \right \|^{2}_{a_{\text{DG}}\left ( K_{j,p}\setminus K_{j,p-1} \right )}.\label{equ:56}
    \end{aligned}
\end{equation}
Combining (\ref{equ:54}), (\ref{equ:55}) and (\ref{equ:56}), we arrive at  
\begin{equation}
    \left \| g \right \|_{a_{\text{DG}}\left ( \Omega \setminus K_{j,p} \right )}^{2}\leq \frac{1}{2}\left ( \frac{M}{c_{1}}+2c_{1}^{-\frac{1}{2}}  \right)\left \| g \right \|_{a_{\text{DG}}\left ( K_{j,p} \setminus K_{j,p-1}\right ) }^{2}.
\end{equation}
By the above inequality and the relation of $\Omega \setminus K_{j,p-1}=\left( \Omega \setminus K_{j,p} \right) \cup  \left(K_{j,p} \setminus K_{j,p-1}\right)$, we have
\begin{equation}
    \begin{aligned}
 \left \| g \right \|_{a_{\text{DG}}\left ( \Omega \setminus K_{j,p-1} \right )}^{2}&=\left \| g \right \|_{a_{\text{DG}}\left ( \Omega \setminus K_{j,p} \right )}^{2}+\left \| g \right \|_{a_{\text{DG}}\left ( K_{j,p} \setminus K_{j,p-1} \right )}^{2}\\
&\geq     \left [ 2c_{1}^{-\frac{1}{2}}\left ( Mc_{1}^{-\frac{1}{2}}+2 \right )^{-1}+1 \right ]  \left \| g \right \|_{a_{\text{DG}}\left (\Omega \setminus K_{j,p}\right )}^{2}  \\
&\geq \left [ 1+\left ( \frac{M}{2}+c_{1}^{\frac{1}{2}} \right )^{-1} \right ]  \left \| g \right \|_{a_{\text{DG}}\left (\Omega \setminus K_{j,p}\right )}^{2}.
    \end{aligned}
\end{equation}
$\mathbf{Part3.}$ Using (\ref{part2}) in (\ref{part1}), we note that
\begin{equation}
    \left \| \phi _{i,\text{ms}}^{j}- \phi _{i,\text{glo}}^{j} \right \|_{\text{DG}} 
    \leq 2\sqrt{M} \left ( c_{1}+\frac{1}{c_{1}} \right )  \left [ 1+\left ( \frac{M}{2}+c_{1}^{\frac{1}{2}} \right )^{-1} \right ] ^{-\frac{1}{2}}\left \| g \right \|_{a_{\text{DG}}\left ( \Omega \setminus K_{j,p-1} \right )}\label{part3}.
\end{equation}
By using (\ref{part2}) in (\ref{part3}), we obtain
\begin{equation}
\begin{aligned}
 \left \| \phi _{i,\text{ms}}^{j}- \phi _{i,\text{glo}}^{j} \right \|_{\text{DG}} 
 &\leq  2\sqrt{M} \left ( c_{1}+\frac{1}{c_{1}} \right )  \left [ 1+\left ( \frac{M}{2}+c_{1}^{\frac{1}{2}} \right )^{-1} \right ] ^{\frac{1}{2}-p}\left \| g \right \|_{a_{\text{DG}}\left ( \Omega \setminus K_{j} \right )}^{2}\\
 &\leq 2\sqrt{M} \left ( c_{1}+\frac{1}{c_{1}} \right )  \left [ 1+\left ( \frac{M}{2}+c_{1}^{\frac{1}{2}} \right )^{-1} \right ] ^{\frac{1}{2}-p}\left \| g \right \|_{\text{DG}}^{2}.
\end{aligned}
\end{equation}
By the definition of $g$, we have
$$ \left \| g \right \|_{\text{DG}}= \left \| \phi _{i,\text{ms}}^{j}-\tilde{\phi }_{i}^{j} \right \|_{\text{DG}}  \leq  2\left \| \tilde{\phi }_{i}^{j} \right \|_{\text{DG}}\leq 2M^{\frac{1}{2}}\left \| \psi  _{i}^{j} \right \|_{b\left ( K_{j} \right )}.$$
 
 This completes the proof.
\end{proof}

We will next prove a stable decomposition property. In particular, we will now estimate $ \left \|\sum_{j=1}^{N}\left ( \phi _{i,\text{ms}}^{j}- \phi _{i,\text{glo}}^{j} \right )  \right \|_{\text{DG}}$  based on Lemma \autoref{lemma:dgcoupling}, that is, the difference between global multiscale basis functions $\phi _{i,\text{glo}}^{j}$  and localized multiscale basis functions $\phi _{i,\text{ms}}^{j}$ on whole domain.

\begin{lemma}\label{lemma4.4}
With the same notations in Lemma \ref{lemma:dgcoupling}, then there exists a constant $c_3>0$ such that 
\begin{equation}
    \left \|\sum_{j=1}^{N}\left ( \phi _{i,\text{ms}}^{j}- \phi _{i,\text{glo}}^{j} \right )  \right \|_{\text{DG}} ^{2}\leq c_{3}\left ( p+1 \right )^{d}\sum_{j=1}^{N}\left \|\left ( \phi _{i,\text{ms}}^{j}- \phi _{i,\text{glo}}^{j} \right )  \right \|_{\text{DG}} ^{2}.
\end{equation}
\end{lemma}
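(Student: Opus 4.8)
The plan is to exploit the fact that each basis-function difference $\phi_{i,\text{ms}}^{j}-\phi_{i,\text{glo}}^{j}$ is, up to exponentially small remainders, localized in the oversampling patch $K_{j,p}$ or a narrow annulus around it, so that summing over $j$ only combines a bounded number of ``overlapping'' terms at any point of $\Omega$. Concretely, I would first record the finite-overlap property of the family of oversampling domains: because the coarse grid is quasi-uniform (rectangular in our setting), any coarse element $K$ is contained in at most $C(p+1)^d$ of the patches $\{K_{j,p}\}_{j=1}^{N}$, with $d=2$ here. Equivalently, the characteristic functions of the supports satisfy $\sum_{j=1}^N \mathbf 1_{K_{j,p}} \le C(p+1)^d$ pointwise. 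This is the combinatorial backbone of the estimate and I would state it as a short lemma or an inline observation before the main computation.

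Next I would expand the squared DG norm of the sum. Writing $e_j := \phi_{i,\text{ms}}^{j}-\phi_{i,\text{glo}}^{j}$, the DG norm is a sum of a broken bilinear (element) part and a jump (edge) part, each of which is a sum of local integrals. For the element part, $\bigl\|\sum_j e_j\bigr\|^2$ expands into $\sum_{K}\int_K \varepsilon(\sum_j e_j):C:\varepsilon(\sum_j e_j)$; on each fixed $K$, only those $j$ with $K$ (effectively) in the support of $e_j$ contribute, and there are at most $C(p+1)^d$ of them, so by Cauchy--Schwarz in that finite index set $\int_K\varepsilon(\sum_j e_j):C:\varepsilon(\sum_j e_j)\le C(p+1)^d\sum_{j}\int_K \varepsilon(e_j):C:\varepsilon(e_j)$. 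Summing over $K$ and doing the analogous manipulation on the edge terms (each coarse edge $E$ is likewise touched by at most $C(p+1)^d$ of the $e_j$) gives exactly $\bigl\|\sum_j e_j\bigr\|_{\text{DG}}^2 \le c_3(p+1)^d \sum_{j=1}^N \|e_j\|_{\text{DG}}^2$, which is the claim; one then sets $c_3$ to absorb the $C$ and the constant $2$ from the number of terms in the Cauchy--Schwarz step.

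The one point that needs care — and which I expect to be the main obstacle — is that $e_j$ is not \emph{exactly} supported in $K_{j,p}$: $\phi_{i,\text{glo}}^{j}$ lives on all of $\Omega$, so $e_j$ has a nonzero (though exponentially small) tail outside $K_{j,p}$, and the jump terms of $e_j$ couple neighbouring elements across coarse edges. To handle this honestly I would either (i) invoke Lemma~\ref{lemma:dgcoupling} to dominate the out-of-patch tail of each $e_j$ by $M_1\|\psi_i^j\|_{b(K_j)}^2$ and fold these tails into the right-hand side (since $M_1$ decays in $p$, they are harmless and can be lumped into the $\sum_j\|e_j\|_{\text{DG}}^2$ up to a constant), or (ii) simply note that the decomposition $e_j = \chi_{j}^{p+1,p}e_j + (1-\chi_{j}^{p+1,p})e_j$ used in the previous lemma already lets us treat the ``interior'' pieces with the finite-overlap argument and bound the exterior pieces by the decay estimate; the bookkeeping of which edge sums pick up contributions from which $e_j$ is the fiddly part. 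Modulo this localization bookkeeping, the estimate is a routine consequence of quasi-uniformity of the mesh and Cauchy--Schwarz over a finite index set.
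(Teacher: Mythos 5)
Your finite-overlap observation is indeed the source of the $(p+1)^d$ factor, but the way you handle the non-locality of $e_j=\phi^j_{i,\text{ms}}-\phi^j_{i,\text{glo}}$ leaves a genuine gap, and it is precisely the step the paper's argument is built to avoid. The tail $(1-\chi_j^{p+1,p})e_j=-(1-\chi_j^{p+1,p})\phi^j_{i,\text{glo}}$ is not small \emph{relative to the right-hand side you must land on}: both $\|e_j\|_{\text{DG}}$ and its out-of-patch part are of the same (exponentially small) order, bounded above by quantities like $\sqrt{M_1}\,\|\psi_i^j\|_{b(K_j)}$, so ``folding the tails into $\sum_j\|e_j\|^2_{\text{DG}}$'' is not justified: Lemma \ref{lemma:dgcoupling} only gives upper bounds in terms of $\|\psi_i^j\|_{b}$, never a lower bound on $\|e_j\|_{\text{DG}}$, and an estimate of the left-hand side by $\sum_j\|\psi_i^j\|_b^2$ is a different (and for this lemma insufficient) statement, since the claimed right-hand side is $\sum_j\|e_j\|^2_{\text{DG}}$. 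Worse, the sum of the $N$ global tails has no finite-overlap structure at all, so Cauchy--Schwarz there produces a factor $N$ (the number of coarse elements), which cannot be absorbed into $c_3(p+1)^d$ because $c_3$ must be independent of the mesh; rescuing this route would require a layer-by-layer decay estimate for $\phi^j_{i,\text{glo}}$ over the annuli $K_{j,p+m+1}\setminus K_{j,p+m}$ together with an annulus-overlap count, and even then one ends up with $\|\psi\|_b$-type bounds rather than $\sum_j\|e_j\|^2_{\text{DG}}$.

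The missing idea is to eliminate the tails exactly by Galerkin orthogonality instead of estimating them. Writing $\vartheta=\sum_j e_j$, one has $\|\vartheta\|^2_{\text{DG}}=\sum_j a_{\text{DG}}(e_j,\vartheta)$, and for each $j$ one replaces $\vartheta$ by $\chi_j^{p+1,p}\vartheta+y_j$, where $y_j$ is the correction from Lemma \ref{lemmasupp} with $\pi(y_j)=\pi\bigl((1-\chi_j^{p+1,p})\vartheta\bigr)$, support in $K_{j,p+1}\setminus K_{j,p}$, and $\|y_j\|_{\text{DG}}\le M\|\vartheta\|_{b(K_{j,p+1})}$; the variational characterizations of $\phi^j_{i,\text{glo}}$ (orthogonality to all $v\in V_h$ with $\pi(v)=0$) and of $\phi^j_{i,\text{ms}}$ (local orthogonality, plus its support in $K_{j,p}$) give $a_{\text{DG}}\bigl(e_j,(1-\chi_j^{p+1,p})\vartheta-y_j\bigr)=0$, so no tail ever appears. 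Then Cauchy--Schwarz, the bound $\|\chi_j^{p+1,p}\vartheta\|^2_{\text{DG}}\lesssim\|\vartheta\|^2_{a_{\text{DG}}(K_{j,p+1})}+\|\vartheta\|^2_{b(K_{j,p+1})}$, and your overlap count applied to the single global function $\vartheta$ over the patches $K_{j,p+1}$ (each coarse element lies in $O\bigl((p+1)^d\bigr)$ of them) yield exactly $c_3(p+1)^d\sum_j\|e_j\|^2_{\text{DG}}$. So the combinatorial ingredient you identified is used, but it is applied to $\vartheta$, while the non-local parts of the individual $e_j$ are removed by the variational structure rather than by the decay estimate.
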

\begin{proof}
 Let $\vartheta =\sum_{j=1}^{N}\left ( \phi _{i,\text{ms}}^{j}- \phi _{i,\text{glo}}^{j} \right ).$ By the Lemma\ \ref{lemmasupp}, $\exists \ y_{j}\in V_h,$ such that
$$ \pi\left ( y_{j} \right )=\pi \left ( \left ( 1-\chi _{j}^{p+1,p} \right )\vartheta  \right ),\quad \text{supp}\left ( y_{j} \right )\subset K_{i,p+1}\setminus K_{j,p},\quad \left \| y_{j} \right \|_{\text{DG}}\leq M\left \| \pi\left ( \left ( 1-\chi _{i}^{p+1,p} \right )\vartheta  \right ) \right \|_{b}.  $$
 Then we obtian 
$$ a_{\text{DG}}\left ( \phi _{i,\text{ms}}^{j}- \phi _{i,\text{glo}}^{j}, v \right )+b\left ( v, \left ( {\psi }' \right ) _{i}^{j}-\left ( {\psi }' \right ) _{i,p}^{j}\right )=0,\ \forall \ v\in V_{h}\left ( K_{j,p} \right ).$$
 Let $v=\left ( \left ( 1-\chi _{j}^{p+1,p} \right )\vartheta  \right )-y_{j}$, then
 \begin{equation}
   a_{\text{DG}}\left ( \phi _{i,\text{ms}}^{j}- \phi _{i,\text{glo}}^{j}, \left ( \left ( 1-\chi _{j}^{p+1,p} \right )\vartheta  \right )-y_{j} \right )=0  .
 \end{equation}
Hence, by the definition of $\vartheta$, we note that
\begin{equation}
\begin{aligned}
 \left \| \vartheta  \right \|_{\text{DG}}^{2}
   &=a_{\text{DG}}\left ( \vartheta , \vartheta  \right )\\
   &=\sum_{j=1}^{N}a_{\text{DG}}\left ( \phi _{i,\text{ms}}^{j}- \phi _{i,\text{glo}}^{j}, \vartheta  \right )\\
   &=\sum_{j=1}^{N}a_{\text{DG}}\left ( \phi _{i,\text{ms}}^{j}- \phi _{i,\text{glo}}^{j}, \chi _{j}^{p+1,p}\vartheta +y_{j}  \right ). 
\end{aligned}
\end{equation}

For each $j=1,2,...,N,$ there exists a constant $c_3>0$ such that
\begin{equation}
\begin{aligned}
 \left \| \chi _{j}^{p+1,p}\vartheta  \right \|_{\text{DG}}^{2} 
 &\leq c_{3}\left ( \left \| \vartheta  \right \|_{a_{\text{DG}}\left ( K_{j,p+1} \right )}^{2}+  \left \| \vartheta  \right \|_{b\left ( K_{j,p+1} \right )}^{2}\right )\\
&\leq c_{3} \left ( 1+\frac{1}{c_{3}} \right )
^{2}\left \| \vartheta  \right \|_{a_{\text{DG}}\left(K_{j,p+1} \right )}^{2}.
\end{aligned}
\end{equation}
In addition, by the projection of $\pi$ and the definition of the cutoff function, we have
\begin{equation}
\begin{aligned}
 \left \| y_{j} \right \|_{\text{DG}}^{2}
 &\leq M^{2}\left \| \pi\left ( 1-\chi _{j}^{p+1,p} \right )\vartheta  \right \|_{b}^{2}\\
 &\leq M^{2}\left \| \pi\left (  \chi _{j}^{p+1,p}\vartheta  \right )\right \|_{b\left ( K_{j,p+1} \right )}^{2}\\
 &\leq M^{2}\left \| \vartheta  \right \|_{b\left ( K_{j,p+1} \right )}^{2}.
\end{aligned}
\end{equation}
This completes the proof of the lemma.
\end{proof}

Now, we are ready to establish our main theorem, which estimates the error between the solution $u$ and the multiscale
solution $u_{\text{ms}}^{\text{off}}$. To proof the following theorem, the approximate solution $u_{\text{gl}} \in V_{\text{cem}}$ obtained in the global multiscale space $V_{\text{cem}}$ is defined by
\begin{equation}
a_{\text{DG}}\left(u_{\text{gl}}, v\right)=\int_{\Omega} f v,\ \forall\  v \in V_{\text{cem}}.\label{equ:global}
\end{equation}

\begin{theorem}\label{thm:thm1}
Let $u$ be the solution of (\ref{adg}) and $u_{\text{ms}}^{\text{off}}$ be the solution of (\ref{eq:ms}). Then we have
\begin{equation}
    \left \| u-u_{\text{ms}}^{\text{off}} \right \|_{\text{DG}}\leq c_{3}c_{1}^{-\frac{1}{2}}\left \| k_{1}^{-\frac{1}{2}}f \right \|+c_{3}\left ( p+1 \right )^{\frac{d}{2}}\sqrt{M_{1}}\left \| u_{\text{gl}} \right \|_b, 
\end{equation}
where $u_{\text{gl}}$ is the multiscale solution using corresponding global basis in (\ref{equ:global}). \\
Moreover, if $p=O\left(\log \left(\frac{\lambda _{\max}\left ( C\left ( x \right ) \right )}{H}\right)\right)$ and $\chi_{i}$ are bilinear partition of unity, then there exists a constant $c_4$ such that 
\begin{equation}
\left \| u-u_{\text{ms}}^{\text{off}} \right \|_{\text{DG}} \leq c_{4}c_{1}^{-\frac{1}{2}}H\left \| k_{2}^{-\frac{1}{2} }f \right \| .
\end{equation}
\end{theorem}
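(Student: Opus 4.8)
The plan is to introduce the global CEM Galerkin solution $u_{\text{gl}}\in V_{\text{cem}}$ defined in~(\ref{equ:global}) as an intermediate object and to split
$$\|u-u_{\text{ms}}^{\text{off}}\|_{\text{DG}}\le\|u-u_{\text{gl}}\|_{\text{DG}}+\|u_{\text{gl}}-u_{\text{ms}}^{\text{off}}\|_{\text{DG}},$$
treating the first term as the approximation error of the global multiscale space and the second as the localization error. The structural fact underlying both estimates is that every global basis function $\phi_{i,\text{glo}}^{j}$ is $a_{\text{DG}}$-orthogonal to $\widetilde V_h:=\{w\in V_h:\pi(w)=0\}$ --- immediate from the saddle-point system associated with~(\ref{basisiob}), since $b(w,\delta)=0$ whenever $\pi(w)=0$ and $\delta\in W_{\text{aux}}$ --- so that, by a dimension count, $a_{\text{DG}}$ induces the orthogonal decomposition $V_h=V_{\text{cem}}\oplus\widetilde V_h$. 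Throughout, Lemma~\ref{lem:coer} supplies the coercivity and continuity needed to make $a_{\text{DG}}$ an inner product and to invoke C\'ea-type best-approximation estimates.

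For the first term I would set $e:=u-u_{\text{gl}}$. Because $u_{\text{gl}}$ is the $a_{\text{DG}}$-projection of $u$ onto $V_{\text{cem}}$, we have $e\in\widetilde V_h$, hence $\pi(e)=0$, and $a_{\text{DG}}(e,v)=(f,v)$ for all $v\in\widetilde V_h$. Testing with $v=e$, using Lemma~\ref{lem:coer}, the Cauchy--Schwarz bound $(f,e)\le\|k_1^{-1/2}f\|\,\|e\|_b$, and the spectral-gap estimate $\|e\|_b^2\le c_1^{-1}\sum_{j}a_j(e,e)\le c_1^{-1}\|e\|_{\text{DG}}^2$ --- valid since $\pi(e)=0$ forces $e|_{K_j}$ to be $b_j$-orthogonal to $W_{\text{aux}}(K_j)$ and the edge contributions to $\|\cdot\|_{\text{DG}}$ are nonnegative --- gives $\|u-u_{\text{gl}}\|_{\text{DG}}\le 2c_1^{-1/2}\|k_1^{-1/2}f\|$.

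For the second term, expand $u_{\text{gl}}=\sum_{i,j}c_i^{\,j}\phi_{i,\text{glo}}^{j}$ with the eigenfunctions normalized by $\|\psi_i^j\|_b=1$; then $\sum_{i,j}(c_i^{\,j})^2=\|\pi(u_{\text{gl}})\|_b^2\le\|u_{\text{gl}}\|_b^2$ since $\pi$ is a $b$-orthogonal projection. Taking $w:=\sum_{i,j}c_i^{\,j}\phi_{i,\text{ms}}^{j}\in V_H$, the finite-overlap argument of Lemma~\ref{lemma4.4} together with the exponential-decay bound of Lemma~\ref{lemma:dgcoupling} applied term by term yields $\|u_{\text{gl}}-w\|_{\text{DG}}^2\le c_3(p+1)^{d}M_1\sum_{i,j}(c_i^{\,j})^2\le c_3(p+1)^{d}M_1\|u_{\text{gl}}\|_b^2$. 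Since $u_{\text{ms}}^{\text{off}}$ is the $a_{\text{DG}}$-Galerkin solution in $V_H$, C\'ea's lemma gives $\|u-u_{\text{ms}}^{\text{off}}\|_{\text{DG}}\le 4\|u-w\|_{\text{DG}}\le 4\|u-u_{\text{gl}}\|_{\text{DG}}+4\|u_{\text{gl}}-w\|_{\text{DG}}$; inserting the two estimates and renaming constants produces the first displayed bound of the theorem.

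For the convergence rate I would substitute $p=O(\log(\lambda_{\max}(C(x))/H))$. Since $M_1$ decays geometrically in $p$ (the bracket in Lemma~\ref{lemma:dgcoupling} exceeds $1$), the factor $(p+1)^{d/2}M_1^{1/2}$ decays faster than any power of $H$, so, combined with a contrast-dependent energy bound for $\|u_{\text{gl}}\|_b$, the second term becomes $\le c_4 H\|k_2^{-1/2}f\|$ once $p$ is chosen large enough; and for bilinear partition-of-unity functions $|\nabla\chi_i|\sim H^{-1}$, whence $k_1\sim k_2H^{-2}$ and $\|k_1^{-1/2}f\|\lesssim H\|k_2^{-1/2}f\|$, which turns the first term into $c_4c_1^{-1/2}H\|k_2^{-1/2}f\|$. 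The step I expect to be the \emph{main obstacle} is precisely this last balancing act in the second term: one must track how $M$, $c_1$ and $c_2$ (and hence the contrast) enter $M_1$, bound $\|u_{\text{gl}}\|_b$ despite $u_{\text{gl}}$ carrying a nonzero $\pi$-component, and check that $p\sim\log(\lambda_{\max}(C)/H)$ absorbs the resulting contrast-dependent growth so that the exponential decay of the basis functions genuinely dominates it down to an $O(H)$ estimate.
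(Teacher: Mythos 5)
Your proposal is correct and follows essentially the same route as the paper's proof: the splitting through the global solution $u_{\text{gl}}$ and its localized surrogate $\sum_{i,j}d_{ij}\phi_{i,\text{ms}}^{j}$, quasi-optimality of the Galerkin solution in $V_H$, the Galerkin-orthogonality plus spectral-gap bound $\|u-u_{\text{gl}}\|_b^2\leq c_1^{-1}\|u-u_{\text{gl}}\|_{\text{DG}}^2$ for the first term, Lemmas \ref{lemma:dgcoupling} and \ref{lemma4.4} for the second, and the $k_1\sim k_2H^{-2}$ scaling with $p=O(\log(\lambda_{\max}(C)/H))$ for the $O(H)$ rate (you are in fact more explicit than the paper about the $a_{\text{DG}}$-orthogonal decomposition $V_h=V_{\text{cem}}\oplus\widetilde V_h$ and the C\'ea constant). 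The only piece you defer --- the ``contrast-dependent energy bound'' for $\|u_{\text{gl}}\|_b$ --- is handled in the paper by the short stability chain $\|u_{\text{gl}}\|_b^2\leq \max\{k_1\}\|u_{\text{gl}}\|_{L^2}^2\leq c_3\max\{k_1\}\|u_{\text{gl}}\|_{\text{DG}}^2\leq c_3\max\{k_1\}\|k_1^{-1/2}f\|\,\|u_{\text{gl}}\|_b$, after which the logarithmic choice of $p$ absorbs the contrast factors exactly as you describe.
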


\begin{proof}
\quad Let $\left ( \chi _{j} \right )_{j=1}^{N}$ be bilinear partition of unity. 
Define $u_{\text{gl}}=\sum_{j=1}^{N}\sum_{i=1}^{G_{j}}d_{ij}\phi _{i,\text{glo}}^{j}$, then
$v=\sum_{j=1}^{N}\sum_{i=1}^{G_{j}}d_{ij} \phi _{i,\text{ms}}^{j}\in V_{H}$.
So, we note that
\begin{equation}
    \begin{aligned}
    \left \| u-u_{\text{ms}}^{\text{off}} \right \|_{\text{DG}}&\leq \left \| u-v \right \|_{\text{DG}} \\
    &\leq  \left \| u-u_{\text{gl}} \right \|_{\text{DG}}+\left \| \sum_{j=1}^{N}\sum_{i=1}^{G_{j}}d_{ij}\left ( \phi _{i,\text{ms}}^{j} - \phi _{i,\text{glo}}^{j}\right ) \right \|_{\text{DG}}.\label{equ:69}
    \end{aligned}
\end{equation} 
For the second term in the right hand of (\ref{equ:69}), recall the Lemma \ref{lemmasupp} and Lemma \ref{lemma:dgcoupling}, there exists a constant $c_3>0$ such that 
\begin{equation}
    \begin{aligned}
\left \| \sum_{j=1}^{N}\sum_{i=1}^{G_{j}}d_{ij}\left ( \phi _{i,\text{ms}}^{j} - \phi _{i,\text{glo}}^{j}\right ) \right \|_{\text{DG}}^{2}
&\leq c_{3}\left ( p+1 \right )^{d}\sum_{j=1}^{N}\left \|\sum_{i=1}^{G_{j}}d_{ij}\left ( \phi _{i,\text{ms}}^{j} - \phi _{i,\text{glo}}^{j}\right )  \right \|_{\text{DG}}^{2}\\
&\leq c_{3}\left ( p+1 \right )^{d}M_{1}\left \|\sum_{i=1}^{G_{j}}d_{ij}\psi _{i}^{j} \right \|_{b}^{2}\\
&\leq c_{3}\left ( p+1 \right )^{d}M_{1}\left \| u_{\text{gl}} \right \|_{b}^{2}.
    \end{aligned}
\end{equation}
Next we estimate the first term $\left \| u-u_{\text{gl}} \right \|_{\text{DG}}$ in (\ref{equ:69}). By using the orthogonality property and the eigenvalues in (\ref{abf}), we have
\begin{equation}
\begin{aligned}
 a_{\text{DG}}\left(u-u_{\text{gl}}, u-u_{\text{gl}}\right) &=a_{\text{DG}}\left(u, u-u_{\text{gl}}\right)=\left(f, u-u_{\text{gl}}\right) \\ 
 & \leq\left\|k_{1} ^{-\frac{1}{2}}f\right\|_{L^{2}(\Omega)}\left\|u-u_{\text{gl}}\right\|_{b}
\end{aligned}
\end{equation}
and
\begin{equation}
\begin{aligned}
\left \| u-u_{\text{gl}} \right \|_{b}^{2}=\sum_{i=1}^{N}\left\|\left(u-u_{\text{gl}}\right)\right\|_{b\left(K_{i}\right)}^{2}
\leq \frac{1}{c_{1}} \sum_{i=1}^{N}\left\|u-u_{\text{gl}}\right\|_{a_{\text{DG}}\left(K_{i}\right)}^{2}=\frac{1}{c_{1}}\left\|u-u_{\text{gl}}\right\|_{\rm{DG}}^{2}.
\end{aligned}
\end{equation}
Then we apply Lemma \ref{lemmasupp} to the function 
$\sum_{i=1}^{G_{j}}d_{ij}\left ( \phi _{i,\text{ms}}^{j} - \phi _{i,\text{glo}}^{j}\right )$. 
Then we have
\begin{equation}
\left \| u-u_{\text{ms}}^{\text{off}} \right \|_{\text{DG}}\leq c_{3}c_{1}^{-\frac{1}{2}}\left \| k_{1}^{-\frac{1}{2}}f \right \|+c_{3}\left ( p+1 \right )^{\frac{d}{2}}\sqrt{M_{1}}\left \| u_{\text{gl}} \right \|_b.\label{equ:73}
\end{equation}
This completes the first part of the theorem.  Next,  we estimate $\left \| u_{\text{gl}}\right \|_{b}$,
\begin{equation*}
\begin{aligned}
\left \|  u_{\text{gl}}\right \|_{b}^{2}
&\leq \max\left \{ k_{1}  \right \}   \left \| u_{\text{gl}} \right \|_{L^{2}\left ( \Omega  \right )}^{2}\\
&\leq c_{3}\max \left \{ k_{1}  \right \}       \left \| u_{\text{gl}}\right \|_{\text{DG}}^{2}\\
&\leq c_{3}\max\left \{ k_{1}  \right \}   \int_{\Omega }f u_{\text{gl}}\\
&\leq c_{3}\max\left \{ k_{1}  \right \}   \left \| k_{1}^{-\frac{1}{2}}f \right \|_{L^{2}\left ( \Omega  \right )}\left \| u_{\text{gl}} \right \|_{b}.
\end{aligned}
\end{equation*}
Dividing both sides of the above inequality by $\left \| u_{\text{gl}} \right \|_{b}$, we obtian
\begin{equation}
\left \| u_{\text{gl}}\right \|_{b} \leq c_{3}\max\left \{ k_{1}  \right \}   \left \| k_{1}^{-\frac{1}{2}}f \right \|_{L^{2}\left ( \Omega\right )}.\label{equ:111}
\end{equation}
Substituting (\ref{equ:111}) into (\ref{equ:73}), we have 
\begin{equation}
\left \| u-u_{\text{ms}}^{\text{off}} \right \|_{\text{DG}}\leq c_{1}^{-\frac{1}{2}}c_{3}\left(1+c_{1}^{\frac{1}{2}}\max\{k_1\} c_{3}\left ( p+1 \right )^{\frac{d}{2}}\sqrt{M_{1}}\right)\left \| k_{1}^{-\frac{1}{2}}f \right \|. \label{equ:222}
\end{equation}

We further demonstrate that our method achieves linear convergence in coarse grid size.  To obtain our desired goal,
using the fact that $\left|\nabla \chi_i\right|=O\left(H^{-1}\right)$ and $k_1=\sum_{i=1}^{N_{c}}k_2\left|\nabla \chi_{i}\right |^{2},$  and assuming $\chi_{i}$ are bilinear partition of unity,  (\ref{equ:222}) can be rewritten as
 \begin{equation}
 \left \| u-u_{\text{ms}}^{\text{off}} \right \|_{\text{DG}}\leq c_{1}^{-\frac{1}{2}}c_{3}\left(1+c_{1}^{\frac{1}{2}}\max\{k_1\} c_{3}\left ( p+1 \right )^{\frac{d}{2}}\sqrt{M_{1}}\right)H\left \| k_{2}^{-\frac{1}{2}}f \right \|.
 \end{equation}
Finally we only need to proof  $c_{1}^{\frac{1}{2}}\max\{k_1\} c_{3}\left ( p+1 \right )^{\frac{d}{2}}\sqrt{M_{1}}\sim O\left ( 1 \right ).$ Taking logarithm to $c_{1}^{\frac{1}{2}}\max\left \{ k_{1}  \right \}  c_{3}\left ( p+1 \right )^{\frac{d}{2}}\sqrt{M_{1}}$, we have 
 $$\text{log}\left (\max k_2 \right )+\frac{d}{2}\text{log}\left ( p+1 \right )+\frac{1}{2}\left ( \frac{1}{2}-p \right )\text{log}\left [ 1+\left ( \frac{M}{2}+c_{1}^{\frac{1}{2}} \right )^{-1} \right ]+\text{log}\left ( H^{-2} \right )=O\left ( 1 \right ).$$
 Let $p=O\left(\log \left(\frac{\lambda _{\max}\left ( C\left ( x \right ) \right )}{H}\right)\right),$  there holds 
 $$\max\left \{ k_2 \right \}  H^{-2}c_3 \left ( p+1 \right )^{\frac{d}{2}}M_{1}^{\frac{1}{2}}=O\left ( 1 \right ).$$
Then we take $c_4=c_{3}\left(1+c_{1}^{\frac{1}{2}}\max\{k_1\} c_{3}\left ( p+1 \right )^{\frac{d}{2}}\sqrt{M_{1}}\right).$ This completes the proof.
\end{proof}

\subsection{Convergence Analysis of Relaxed Method} 
In this section, we analyze the convergence of relaxed version of the CEM-GMsFEM. We first consider the basis functions defined in Section \ref{sec:relax} and estimate the difference between $ \phi _{i,\text{ms}}^{j}$ and $  \phi _{i,\text{glo}}^{j}$.
\begin{lemma}\label{lemma4.5}
Let us consider such an oversampling area $K_{j,p}$. That is, $K_{j,p}$ is obtained by mapping and extending the oversampling area $K_{j}$ with $p$ coarse grid blocks. Let $ \psi _{j}^{i}\in W_{\text{aux}}$ be the auxiliary multiscale basis function, $ \phi _{i,\text{glo}}^{j} $ be the solution of   (\ref{basisrelexiob}), and $\phi_{i,\text{ms}}^{j} $ be the solution of   (\ref{basisrelexip}). Then we have
\begin{equation}
      \left \| \phi _{i,\text{ms}}^{j}- \phi _{i,\text{glo}}^{j} \right \|_{\text{DG}}^{2}+\left \| \pi \left(\phi _{i,\text{ms}}^{j}- \phi _{i,\text{glo}}^{j}\right) \right \|_{b}^{2}\leq M_{2}\left(\left \| \psi _{i,\text{glo}}^{j} \right \|_{\text{DG}}^{2}+\left \| \pi(\left(\psi _{i,\text{glo}}^{j}\right) \right \|_{b}^{2}\right),
 \end{equation}
 where $M_{2}=3\left ( 1+\frac{1}{2}c_{1}^{-1} \right )\left ( 1+ \left ( 2+\frac{1}{2}c_{1}^{-\frac{1}{2}} \right )^{-1} \right )^{1-p}.$
\end{lemma}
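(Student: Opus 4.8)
The plan is to mimic the structure of the proof of Lemma~\ref{lemma:dgcoupling}, but now working with the augmented bilinear form that is natural for the relaxed method. Introduce on $V_h$ the symmetric positive form
\[
\mathcal{A}(\phi,w) := a_{\text{DG}}(\phi,w)+b\bigl(\pi(\phi),\pi(w)\bigr),
\]
so that the relaxed global and local basis functions are exactly the $\mathcal{A}$-projections of the right-hand-side data onto $V_h$ and $V_h(K_{j,p})$ respectively; the natural energy is $\vertiii{\phi}^2 := \|\phi\|_{\text{DG}}^2 + \|\pi(\phi)\|_b^2$, which is the quantity appearing on both sides of the claimed estimate. First I would record the Galerkin orthogonality: subtracting the variational formulations (\ref{problemrelaxip}) and (\ref{problemrelaxiob}) and restricting test functions to $V_h(K_{j,p})$ gives $\mathcal{A}(\phi_{i,\text{ms}}^j-\phi_{i,\text{glo}}^j, w)=0$ for all $w\in V_h(K_{j,p})$. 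Consequently $\vertiii{\phi_{i,\text{ms}}^j-\phi_{i,\text{glo}}^j} \le \vertiii{\phi_{i,\text{glo}}^j - w}$ for every $w\in V_h(K_{j,p})$, so the whole problem reduces to constructing a good localized approximant $w$ of $\phi_{i,\text{glo}}^j$.

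Next I would build that approximant via the cutoff-function bootstrap used in Lemma~\ref{lemma:dgcoupling}. Set $g := \phi_{i,\text{glo}}^j$ (there is no need to subtract a $\tilde\phi$ here since the global relaxed basis does not satisfy a hard constraint; instead its energy $\vertiii{g}$ is directly controlled by the data $\|\psi_{i,\text{glo}}^j\|_{\text{DG}}^2+\|\pi(\psi_{i,\text{glo}}^j)\|_b^2$ through (\ref{basisrelexiob})). Using $\chi_j^{p+1,p}$ and Lemma~\ref{lemmasupp} applied to $\pi((1-\chi_j^{p+1,p})g)$, produce a corrector $\psi'$ supported in $K_{j,p+1}\setminus K_{j,p}$ with $\pi(\psi' - (1-\chi_j^{p+1,p})g)=0$ and $\|\psi'\|_{a_{\text{DG}}}$ bounded by $\sqrt M$ times the relevant $b$-norm; then $w := \chi_j^{p+1,p} g - \psi'$ (or rather its complement, following the sign conventions in Lemma~\ref{lemma:dgcoupling}) lies in $V_h(K_{j,p+1})$ and the error $g-w$ is supported in $\Omega\setminus K_{j,p}$. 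The product-rule bound $\nabla(\varsigma g)=g\nabla\varsigma+\varsigma\nabla g$ together with $|\nabla\varsigma|^2\le C_T\sum_j|\nabla\chi_j|^2$ and the spectral gap $\|v\|_b^2\le c_1^{-1}\|v\|_{a_{\text{DG}}}^2$ on elements outside $K_{j,p-1}$ then yields the one-step contraction
\[
\vertiii{g}_{\Omega\setminus K_{j,p}}^2 \;\le\; \Bigl[1-\bigl(2+\tfrac12 c_1^{-1/2}\bigr)^{-1}\Bigr]\,\vertiii{g}_{\Omega\setminus K_{j,p-1}}^2,
\]
and iterating $p$ times gives the factor $\bigl(1+(2+\tfrac12 c_1^{-1/2})^{-1}\bigr)^{1-p}$. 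Combining the orthogonality estimate, the iteration, and the constant $3(1+\tfrac12 c_1^{-1})$ coming from bounding the three terms (the $(1-\chi)g$ energy, the $b$-part of its projection, and the corrector $\psi'$) produces $M_2$ as stated.

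The main obstacle I expect is the careful bookkeeping of the $b$-projection terms: unlike in the constrained case, $\pi(\phi_{i,\text{ms}}^j-\phi_{i,\text{glo}}^j)$ is not zero, so every step of the cutoff argument must track both $\|\cdot\|_{\text{DG}}$ and $\|\pi(\cdot)\|_b$ simultaneously, and one must verify that multiplying by $\chi_j^{p+1,p}$ and applying $\pi$ does not inflate the $b$-norm (this uses that $\pi$ is a $b$-orthogonal projection, so it is a $b$-contraction, exactly as in (\ref{equ:45})). A second delicate point is that the Galerkin-orthogonality identity must be taken with respect to $\mathcal{A}$, not $a_{\text{DG}}$; one has to check that the extra term $b(\pi(\phi_{i,\text{ms}}^j)-\pi(\phi_{i,\text{glo}}^j),\pi(w))$ indeed vanishes for localized $w$, which it does because on oversampling layers interior to $K_{j,p}$ the cutoff is $\equiv 1$ and on the complement it is $\equiv 0$, so the relevant local $b_s$-pairings drop out just as in the proof of (\ref{part2}). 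Everything else is a routine adaptation of Lemma~\ref{lemma:dgcoupling}.
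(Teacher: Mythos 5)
Your proposal is correct in outline and follows essentially the same route as the paper: Galerkin orthogonality for the augmented form $a_{\text{DG}}(\cdot,\cdot)+b\left(\pi(\cdot),\pi(\cdot)\right)$ restricted to $V_{h}\left(K_{j,p}\right)$, quasi-optimality in the combined energy $\|\cdot\|_{\text{DG}}^{2}+\|\pi(\cdot)\|_{b}^{2}$, a cutoff comparison function, and an iterated annulus (Caccioppoli-type) estimate—obtained by testing the global relaxed problem against $\left(1-\chi\right)\phi_{i,\text{glo}}^{j}$, whose support is disjoint from that of $\psi_{i}^{j}$—which yields the factor $\left(1+\left(2+\frac{1}{2}c_{1}^{-1/2}\right)^{-1}\right)^{1-p}$. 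The only difference is cosmetic: since the relaxed minimization is unconstrained, the paper takes the comparison function $\chi_{j}^{p,p-1}\phi_{i,\text{glo}}^{j}$ directly, so the corrector $\psi'$ from Lemma~\ref{lemmasupp} that you introduce is unnecessary (though harmless).
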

\begin{proof}
By the above definition, we note that
\begin{equation*}
    \begin{aligned}
    a_{\text{DG}}\left (  \phi _{i,\text{glo}}^{j},w \right )+b\left ( \pi\left (  \phi _{i,\text{glo}}^{j} \right ) ,\pi\left ( w \right )\right )&=b\left (  \phi _{i,\text{glo}}^{j}, \pi\left ( w \right )\right ),\quad  \forall \ w\in  V_h,\\
    a_{\text{DG}}\left ( \phi _{i,\text{ms}}^{j}, w \right )+b\left ( \pi\left ( \phi _{i,\text{ms}}^{j} \right ) ,\pi\left ( w \right )\right )&=b\left ( \phi _{i,\text{ms}}^{j}, \pi\left ( w \right )\right ),\quad  \forall \ w\in  V_{h}\left ( K_{j,p} \right ).
    \end{aligned}
\end{equation*}
Subtracting the above equations, we obtain
\begin{equation*}
   \begin{aligned}
   a_{\text{DG}}\left (  \phi _{i,\text{glo}}^{j},w \right )-a_{\text{DG}}\left ( \phi _{i,\text{ms}}^{j}, w \right )+b\left ( \pi\left (  \phi _{i,\text{glo}}^{j} \right ) ,\pi\left ( w \right )\right )-b\left ( \pi\left ( \phi _{i,\text{ms}}^{j} \right ) ,\pi\left ( w \right )\right )\\=b\left (  \phi _{i,\text{glo}}^{j}, \pi\left ( w \right )\right )-b\left ( \phi _{i,\text{ms}}^{j}, \pi\left ( w \right )\right ).
   \end{aligned} 
\end{equation*}
Then we have
\begin{equation*}
   \begin{aligned}
a_{\text{DG}}\left (  \phi _{i,\text{glo}}^{j}- \phi _{i,\text{ms}}^{j}, w\right )+b\left ( \pi\left (  \phi _{i,\text{glo}}^{j} \right ) -\pi\left ( \phi _{i,\text{ms}}^{j} \right ),\pi\left ( w \right )\right )=0,
   \end{aligned} 
\end{equation*}
for all $w \in V_{h}\left(K_{j,p}\right) $.
Taking $w=v- \phi _{i,\text{glo}}^{j}, w\in V_{h}\left ( K_{j,p} \right )$, where $v\ \in\ V_{h}\left(K_{j,p}\right)$, we note that
$$\left \| \phi _{i,\text{ms}}^{j}-  \phi _{i,\text{glo}}^{j}\right \|_{\text{DG}}^{2}+
\left \| \pi\left ( \phi _{i,\text{glo}}^{j}-  \phi _{i,\text{glo}}^{j} \right )\right \|_{b}^{2}\leq \left \| \phi _{i,\text{glo}}^{j}- v\right \|_{\text{DG}}^{2}+\left \| \pi\left ( \phi _{i,\text{glo}}^{j}- v \right )\right \|_{b}^{2}.$$
Let $v=\chi _{j}^{p,p-1}\phi _{i,\text{glo}}^{j},$ we have
$$\left \| \phi _{i,\text{ms}}^{j}-  \phi _{i,\text{glo}}^{j}\right \|_{\text{DG}}^{2}+
\left \| \pi\left ( \phi _{i,\text{ms}}^{j}-  \phi _{i,\text{glo}}^{j} \right )\right \|_{\text{DG}}^{2}\leq \left \| \phi _{i,\text{glo}}^{j}- \chi _{j}^{p,p-1}\phi _{i,\text{glo}}^{j}\right \|_{\text{DG}}^{2}+\left \| \pi\left ( \phi _{i,\text{glo}}^{j}-  \chi _{j}^{p,p-1}\phi _{i,\text{glo}}^{j} \right )\right \|_{b}^{2}.$$
Subsequently, we will estimate the above inequality in three parts, first estimating $\left \| \phi _{i,\text{glo}}^{j}- \chi _{j}^{p,p-1}\phi _{i,\text{glo}}^{j}\right \|_{\text{DG}}^{2}$ and $\left \| \pi\left ( \phi _{i,\text{glo}}^{j}-  \chi _{j}^{p,p-1}\phi _{i,\text{glo}}^{j} \right )\right \|_{b}^{2}$ separately, then further estimating the first part's results in the second part, and finally obtaining our final estimate of the second part in the third part.\\

$\mathbf{Part\ 1.}$\ Next we estimate $\left \| \left ( 1-\chi _{j}^{p+1,p} \right )\phi _{i,\text{glo}}^{j} \right \|_{\text{DG}}$. By the definition of \rm{DG} norm and combining the two occurrences of $\left ( 1-\chi _{j}^{p+1,p} \right )$ in the first term but in different positions as $\left ( 1-\chi _{j}^{p+1,p} \right )^2$and the two occurrences of $\phi _{i,\text{glo}}^{j}$ in the first term but in different positions as $\left(\phi _{i,\text{glo}}^{j}\right)^2$,  we can get
\begin{equation}
\begin{aligned}
\left \| \left ( 1-\chi _{j}^{p+1,p} \right )\phi _{i,\text{glo}}^{j} \right \|_{\text{DG}}^{2}
&=\sum_{K\in\mathcal{T} ^{H}}\int_{K} \epsilon \left ( \left ( 1-\chi _{j}^{p+1,p} \right )\phi _{i,\text{glo}}^{j}   \right ):C:\epsilon \left (\left ( 1-\chi _{j}^{p+1,p} \right )\phi _{i,\text{glo}}^{j}  \right ) \text{dx}\\
&\quad+\frac{\gamma }{h}\sum_{E\in \mathcal{E}^{H}}\int_{E}\underline{{[\![ \left ( 1-\chi _{j}^{p+1,p} \right )\phi _{i,\text{glo}}^{j} ]\!] }}:C:\underline{{[\![ \left ( 1-\chi _{j}^{p+1,p} \right )\phi _{i,\text{glo}}^{j} ]\!] } }\\
&\quad\quad+[\![ \left ( 1-\chi _{j}^{p+1,p} \right )\phi _{i,\text{glo}}^{j} ]\!] \cdot  D \cdot [\![ \left ( 1-\chi _{j}^{p+1,p} \right )\phi _{i,\text{glo}}^{j} ]\!]\text{ds}\\
&\leq 2\left [ \sum_{\Omega \setminus K_{j,p}\in\mathcal{T}  ^{H}}\int_{\Omega \setminus K_{j,p}}\left ( 1-\chi _{j}^{p+1,p} \right )^{2}\varepsilon \left ( \phi _{i,\text{glo}}^{j}   \right ):C:\varepsilon \left ( \phi _{i,\text{glo}}^{j}   \right )\text{dx}\right]\\
&\quad+2\left[\sum_{\Omega \setminus K_{j,p}\in\mathcal{T} ^{H}}\int_{\Omega \setminus K_{j,p}} \varepsilon \left ( \chi _{j}^{p+1,p} \right ):C: \varepsilon \left ( \chi _{j}^{p+1,p} \right )\left ( \phi _{i,\text{glo}}^{j}  \right )^{2} \text{dx}\right ]\\
&\quad+\frac{\gamma }{h}\sum_{E\in \mathcal{E}^{H}}\int_{E}\underline{{[\![ \left ( 1-\chi _{j}^{p+1,p} \right )\phi _{i,\text{glo}}^{j} ]\!] }}:C:\underline{{[\![ \left ( 1-\chi _{j}^{p+1,p} \right )\phi _{i,\text{glo}}^{j} ]\!] } }\text{ds}\\
&\quad+\frac{\gamma }{h}\sum_{E\in \mathcal{E}^{H}}\int_{E}[\![ \left ( 1-\chi _{j}^{p+1,p} \right )\phi _{i,\text{glo}}^{j} ]\!] \cdot  D \cdot [\![ \left ( 1-\chi _{j}^{p+1,p} \right )\phi _{i,\text{glo}}^{j} ]\!]\text{ds}.\\
\end{aligned}
\end{equation}
By the definition of cutoff function, note that $1-\chi _{j}^{p+1,p}\leq 1,$ then the above inequality can be calculated as
\begin{equation}
\begin{aligned}
 \left \| \left ( 1-\chi _{j}^{p+1,p} \right )\phi _{i,\text{glo}}^{j}  \right \|_{\text{DG}}^{2}
 \leq 2\left ( \left \| \phi _{i,\text{glo}}^{j}   \right \|_{a_{\text{DG}}\left ( \Omega \setminus K_{j,p-1} \right )}^{2}+c_{1}\left \| \phi _{i,\text{glo}}^{j}   \right \|_{b\left ( \Omega \setminus K_{j,p-1} \right )}^{2} \right ).
\end{aligned}
\end{equation}
Note that for $K\in \mathcal{T}^{H}$, we pick the reflection $P:=I-\pi$
which satisfies 
\begin{equation}
    \left \| Pw \right \|_{b\left ( K  \right )}^{2}\leq \frac{1}{2}c_{1}^{-1}a_{\text{DG}}\left ( w,w \right ),\label{39}
\end{equation}
then we have
\begin{equation*}
    \begin{aligned}
     \left \| \phi _{i,\text{glo}}^{j}  \right \|_{b\left ( K \right )}^{2}&=\left \| P\left ( \phi _{i,\text{glo}}^{j}  \right )+\pi\left ( \phi _{i,\text{glo}}^{j} \right ) \right \|_{b\left ( K \right )}^{2}\\
&=\left \| P\left ( \phi _{i,\text{glo}}^{j}  \right ) \right \|_{b\left ( K \right )}^{2}+\left \| \pi\left ( \phi _{i,\text{glo}}^{j}  \right ) \right \|_{b\left ( K \right )}^{2}\\
&\leq \frac{1}{2}c_{2}^{-1}\left \| \phi _{i,\text{glo}}^{j}  \right \|_{a_{\text{DG}}\left ( K \right )}^{2}+\left \| \pi\left ( \phi _{i,\text{glo}}^{j}  \right ) \right \|_{b\left ( K \right )}^{2}.
    \end{aligned}
\end{equation*}
Therefore, we have
\begin{equation}
    \begin{aligned}
\left \| \left ( 1-\chi _{j}^{p+1,p} \right )\phi _{i,\text{glo}}^{j}  \right \|_{\text{DG}}^{2}\leq 
 2 \left [ \frac{3}{2}\left \| \phi _{i,\text{glo}}^{j}   \right \|_{a_{\text{DG}}\left ( \Omega \setminus K_{j,p-1} \right )}^{2}+\left \| \pi\left ( \phi _{i,\text{glo}}^{j}   \right ) \right \|_{b\left ( \Omega \setminus K_{j,p-1} \right )}^{2} \right ].\label{equ:74}
    \end{aligned}
\end{equation}

We will estimate the second term $\left\| \pi \left ( \phi _{i,\text{glo}}^{j}  - \chi _{j}^{p,p-1}  \phi _{i,\text{glo}}^{j} \right) \right\|_{b}^{2} $. Again by the definition of cutoff function, note that $\chi _{j}^{p+1,p}\leq 1,$ we have
\begin{equation}
\begin{aligned}
 \left\| \pi \left ( \phi _{i,\text{glo}}^{j}  - \chi _{j}^{p,p-1} \phi _{i,\text{glo}}^{j}   \right)\right\|_{b}^{2}&=\left\| \pi \left(\left ( 1- \chi _{j}^{p,p-1}  \right)\phi _{i,\text{glo}}^{j} \right) \right\|_{b}^{2}\\
 &\leq \left\| \left ( 1- \chi _{j}^{p,p-1}  \right)\phi _{i,\text{glo}}^{j} \right\|_{b}^{2}\\
 &\leq \left \| \phi _{i,\text{glo}}^{j}  \right \|_{b\left ( \Omega \setminus K_{j,p-1} \right )}^{2}\\
 &\leq \frac{1}{2} c_{1}^{-1}\left \| \phi _{i,\text{glo}}^{j}  \right \|_{a
_{\text{DG}}\left ( \Omega \setminus K_{j,p-1} \right )}^{2}+\left \| \pi\left ( \phi _{i,\text{glo}}^{j}  \right ) \right \|_{b\left ( \Omega \setminus K_{j,p-1} \right )}^{2}.\label{equ:75}
\end{aligned}
\end{equation}
Hence, combining (\ref{equ:74}) and (\ref{equ:75}), we can get
\begin{equation*}
    \begin{aligned}
  &  \quad  \left \| \phi _{i,\text{ms}}^{j}- \phi _{i,\text{glo}}^{j} \right \|_{\text{DG}}^{2}+\left \| \pi \left(\phi _{i,\text{ms}}^{j}- \phi _{i,\text{glo}}^{j}\right) \right \|_{b}^{2}\\
       &  \leq \left ( 3+\frac{1}{2}c_{1}^{-1} \right )\left \| \phi _{i,\text{glo}}^{j}  \right \|_{a_{\text{DG}}\left ( \Omega \setminus K_{j,p-1} \right )}^{2}+3\left \| \pi\left ( \phi _{i,\text{glo}}^{j}  \right ) \right \|_{b\left ( \Omega \setminus K_{j,p-1} \right )}^{2}\\
       &\leq 3\left ( 1+\frac{1}{2}c_{1}^{-1} \right )\left(\left \| \phi _{i,\text{glo}}^{j}  \right \|_{a_{\text{DG}}\left ( \Omega \setminus K_{j,p-1} \right )}^{2}+\left \| \pi\left ( \phi _{i,\text{glo}}^{j}  \right ) \right \|_{b\left ( \Omega \setminus K_{j,p-1} \right )}^{2}\right).
    \end{aligned}
\end{equation*}
$\mathbf{Part\ 2.}$\ We will estimate $\left \| \phi _{i,\text{glo}}^{j}  \right \|_{a_{\text{DG}}\left ( \Omega \setminus K_{j,p-1} \right )}^{2}$ and $\left \| \pi\left ( \phi _{i,\text{glo}}^{j}  \right ) \right \|_{b\left ( \Omega \setminus K_{j,p-1} \right )}^{2}$. Actually, this can be estimated by $ \left \| \phi _{i,\text{glo}}^{j}  \right \|_{a_{\text{DG}}\left ( K_{j,p-1} \setminus K_{j,p-2} \right )}^{2}$ and $\left \| \pi\left ( \phi _{i,\text{glo}}^{j}  \right ) \right \|_{b\left ( K_{j,p-1} \setminus K_{j,p-2} \right )}^{2}$. Then we note that
\begin{equation}
    \begin{aligned}
      a_{\text{DG}}\left ( \phi _{i,\text{glo}}^{j} , \left ( 1-\chi _{j}^{p-1,p-2} \right )\phi _{i,\text{glo}}^{j}  \right )+b\left ( \pi\left ( \phi _{i,\text{glo}}^{j}  \right ), \pi\left ( \left ( 1-\chi _{j}^{p-1,p-2} \right )\phi _{i,\text{glo}}^{j}  \right ) \right )\\
=b\left ( \phi _{i,\text{glo}}^{j} , \pi\left ( \left ( 1-\chi _{j}^{p-1,p-2} \right )\phi _{i,\text{glo}}^{j}  \right ) \right )=0,
    \end{aligned}
\end{equation}
by using the fact that 
$$\text{supp}\left ( 1-\chi _{j}^{p-1,p-2} \right )\subset \Omega \setminus K_{j,p-2},\quad \text{supp}\left ( \phi  _{i,\text{glo}}^{j} \right )\subset K_{j}.$$
By the definition of DG norm, we have
\begin{equation*}
    \begin{aligned}
a_{\text{DG}}\left ( \phi _{i,\text{glo}}^{j} , \left ( 1-\chi _{j}^{p-1,p-2} \right )\phi _{i,\text{glo}}^{j}  \right )
&=\sum_{K_{j,p-2}\in \mathcal{T} ^{H}}\int_{\Omega \setminus K_{j,p-2}}  \epsilon \left (\phi _{i,\text{glo}}^{j}  \right ):C: \epsilon \left (\left ( 1-\chi _{j}^{p-1,p-2} \right )\phi _{i,\text{glo}}^{j}  \right )\text{dx}\\
&\quad+\frac{\gamma }{h}\sum_{E\in \mathcal{E}^{H} }\int_{E}\underline{ {{[\![\phi _{i,\text{glo}}^{j} ]\!]}}}:C:\underline{{{[\![\left ( 1-\chi _{j}^{p-1,p-2} \right )\phi _{i,\text{glo}}^{j} ]\!]}}} \text{ds}\\
&\quad +\frac{\gamma }{h}\sum_{E\in \mathcal{E} ^{H}}\int_{E}{[\![\left ( 1-\chi _{j}^{p-1,p-2} \right )\phi _{i,\text{glo}}^{j} ]\!]}\cdot D\cdot {[\![\phi _{i,\text{glo}}^{j}  ]\!]}\text{ds}\\
&\leq \sum_{K_{j,p-2}\in \mathcal{T} ^{H}}\int_{\Omega \setminus K_{j,p-2}}  \left ( 1-\chi _{j}^{p-1,p-2} \right )\epsilon \left (\phi _{i,\text{glo}}^{j}  \right ):C: \epsilon \left (\phi _{i,\text{glo}}^{j}  \right )\text{dx}\\
&\quad-\sum_{K_{j,p-2}\in \mathcal{T} ^{H}}\int_{\Omega \setminus K_{j,p-2}}  \phi _{i,\text{glo}}^{j} \varepsilon  \left ( \chi _{j}^{p-1,p-2} \right ): C: \epsilon \left (\phi _{i,\text{glo}}^{j}  \right )\text{dx}\\
&\quad+\frac{\gamma }{h}\sum_{E\in \mathcal{E}^{H}}\int_{E}\underline{ {{[\![\phi _{i,\text{glo}}^{j} ]\!]}}}:C:\underline{{{[\![\phi _{i,\text{glo}}^{j} ]\!]}}} \text{ds}\\
&\quad +\frac{\gamma }{h}\sum_{E\in \mathcal{E}^{H}}\int_{E}{[\![\phi _{i,\text{glo}}^{j} ]\!]}\cdot D\cdot {[\![\phi _{i,\text{glo}}^{j}  ]\!]}\text{ds}.\\
    \end{aligned}
\end{equation*}

Consequently, we will estimate  the global multiscale basis functions $\phi _{i,\text{glo}}^{j} $ on domain $\Omega \setminus K_{j,p-1}$ and have
\begin{equation}
    \begin{aligned}
&\quad    \left \| \phi _{i,\text{glo}}^{j}  \right \|_{a_{\text{DG}}\left ( \Omega \setminus K_{j,p-1} \right )}^{2}\\
&\leq a_{\text{DG}}\left ( \phi _{i,\text{glo}}^{j} , \left ( 1-\chi _{j}^{p-1,p-2} \right )\phi _{i,\text{glo}}^{j}  \right )
\quad +\left \| \phi _{i,\text{glo}}^{j}  \right \|_{a_{\text{DG}}\left (K_{j,p-1} \setminus K_{j,p-2} \right )}\left \| \phi _{i,\text{glo}}^{j}  \right \|_{b\left (K_{j,p-1} \setminus K_{j,p-2} \right )}.
    \end{aligned}
\end{equation}
Next, since $\chi _{j}^{p-1,p-2} \equiv 0$ \ in $\Omega \setminus K_{j,p-1}$, we obtain
\begin{equation*}
    \begin{aligned}
    b\left ( \pi\left ( \phi _{i,\text{glo}}^{j}  \right ), \pi\left ( \left ( 1-\chi _{j}^{p-1,p-2} \right )\phi _{i,\text{glo}}^{j}  \right ) \right )&=\left \| \pi\left ( \phi _{i,\text{glo}}^{j}  \right ) \right \|_{b\left ( \Omega \setminus K_{j,p-1} \right )}^{2}\\
&\quad +\int_{K_{j,p-1}\setminus K_{j,p-2}}k_{1}\pi\left ( \phi _{i,\text{glo}}^{j}  \right )\pi\left ( \left ( 1-\chi _{j}^{p-1,p-2} \right )\phi _{i,\text{glo}}^{j}  \right )\text{dx}.
    \end{aligned}
\end{equation*}
Then we note that
\begin{equation}
    \begin{aligned}
&\quad  \left \|  \pi\left ( \phi _{i,\text{glo}}^{j}  \right ) \right \|_{b\left ( \Omega \setminus K_{j,p-1} \right )}^{2}\\
    &=b\left ( \pi\left ( \phi _{i,\text{glo}}^{j}  \right ), \pi\left ( \left ( 1-\chi _{j}^{p-1,p-2} \right )\phi _{i,\text{glo}}^{j}  \right ) \right ) -\int_{K_{j,p-1}\setminus K_{j,p-2}}k_{1}\pi\left ( \phi _{i,\text{glo}}^{j}  \right )\pi\left ( \left ( 1-\chi _{j}^{p-1,p-2} \right )\phi _{i,\text{glo}}^{j}  \right )\text{dx}\\
    &\leq b\left ( \pi\left ( \phi _{i,\text{glo}}^{j}  \right ), \pi\left ( \left ( 1-\chi _{j}^{p-1,p-2} \right )\phi _{i,\text{glo}}^{j}  \right ) \right )+\left \| \phi _{i,\text{glo}}^{j}  \right \|_{b\left ( K_{j,p-1}\setminus  K_{j,p-2}\right )}\left \|\pi\left (  \phi _{i,\text{glo}}^{j}  \right ) \right \|_{b\left ( K_{j,p-1}\setminus  K_{j,p-2}\right )}.
    \end{aligned}
\end{equation}

Finally, we  obtain
\begin{equation}
    \begin{aligned}
    &\quad \left \| \phi _{i,\text{glo}}^{j}  \right \|_{a_{\text{DG}}\left ( \Omega \setminus K_{j,p-1} \right )}^{2}+\left \|  \pi\left ( \phi _{i,\text{glo}}^{j}  \right ) \right \|_{b\left ( \Omega \setminus K_{j,p-1} \right )}^{2}\\
    &\leq \left \| \phi _{i,\text{glo}}^{j}  \right \|_{b\left ( K_{j,p-1}\setminus  K_{j,p-2}\right )}  \left( \left \|\pi\left (  \phi _{i,\text{glo}}^{j}  \right ) \right \|_{b\left ( K_{j,p-1}\setminus  K_{j,p-2}\right )}+\left \| \phi _{i,\text{glo}}^{j}  \right \|_{a_{\text{DG}}\left ( K_{j,p-1}\setminus  K_{j,p-2}\right )} \right)\\
    &\leq \left ( 2+\frac{1}{2}c_{1}^{-\frac{1}{2}} \right ) \left ( \left \|\pi\left (  \phi _{i,\text{glo}}^{j}  \right ) \right \|^{2}_{b\left ( K_{j,p-1}\setminus  K_{j,p-2}\right )}+\left \| \phi _{i,\text{glo}}^{j}  \right \|^{2}_{a_{\text{DG}}\left ( K_{j,p-1}\setminus  K_{j,p-2}\right )}\| \right ).\label{equ:79}
    \end{aligned}
\end{equation}\\
$\mathbf{Part\ 3.}$\ We will show the recursive property of the method in this paper. Firstly we consider $ \left \|\pi\left (  \phi _{i,\text{glo}}^{j}  \right ) \right \|_{b}$ and $\left \| \phi _{i,\text{glo}}^{j}  \right \|_{\text{DG}}$ on $ \Omega\setminus  K_{j,p-2}$ in two regions $\Omega\setminus  K_{j,p-1}$ and $ K_{j,p-1}\setminus  K_{j,p-2}$ and combine (\ref{equ:79}) to get the following results.
\begin{equation}
\begin{aligned}
&\quad \left \|\pi\left (  \phi _{i,\text{glo}}^{j}  \right ) \right \|^{2}_{b\left ( \Omega \setminus  K_{j,p-2}\right )}+\left \| \phi _{i,\text{glo}}^{j}  \right \|^{2}_{a_{\text{DG}}\left ( \Omega\setminus  K_{j,p-2}\right )}\\
&=\left \|\pi\left (  \phi _{i,\text{glo}}^{j}  \right ) \right \|^{2}_{b\left ( \Omega \setminus  K_{j,p-1}\right )}+\left \| \phi _{i,\text{glo}}^{j}  \right \|^{2}_{a_{\text{DG}}\left ( \Omega\setminus  K_{j,p-1}\right )}\\
&\quad +\left \|\pi\left (  \phi _{i,\text{glo}}^{j}  \right ) \right \|^{2}_{b\left ( K_{j,p-1}\setminus  K_{j,p-2}\right )}+\left \| \phi _{i,\text{glo}}^{j}  \right \|^{2}_{a_{\text{DG}}\left ( K_{j,p-1}\setminus  K_{j,p-2}\right )}\\
&\geq\left ( 1+ \left ( 2+\frac{1}{2}c_{1}^{-\frac{1}{2}} \right )^{-1} \right )\left( \left \|\pi\left (  \phi _{i,\text{glo}}^{j}  \right ) \right \|^{2}_{b\left ( \Omega \setminus  K_{j,p}\right )}+\left \| \phi _{i,\text{glo}}^{j}  \right \|^{2}_{a_{\text{DG}}\left ( \Omega\setminus  K_{j,p}\right )}\right).
\end{aligned}
\end{equation}
Using the above inequality recursively, we have
\begin{equation*}
    \begin{aligned}
    &\quad \left \|\pi\left (  \phi _{i,\text{glo}}^{j}  \right ) \right \|_{b\left ( \Omega \setminus  K_{j,p-1}\right )}+\left \| \phi _{i,\text{glo}}^{j}  \right \|_{a_{\text{DG}}\left ( \Omega\setminus  K_{j,p-1}\right )}\\
   & \leq \left ( 1+ \left ( 2+\frac{1}{2}c_{1}^{-\frac{1}{2}} \right )^{-1} \right )^{1-p}\left(\left \|\pi\left (  \phi _{i,\text{glo}}^{j}  \right ) \right \|_{b}+\left \| \phi _{i,\text{glo}}^{j}  \right \|_{\text{DG}}\right).
    \end{aligned}
\end{equation*}
This completes the proof.
\end{proof}

Finally, we prove the convergence.
\begin{theorem}\label{thm:2}
Let $u$ be the solution of (\ref{adg}) and $u_{ms}^{off}$ be the solution of (\ref{eq:ms}).  Then we have
\begin{equation}
    \left \| u-u_{\text{ms}}^{\text{off}} \right \|_{\text{DG}}\leq c_{3}c_{1}^{-\frac{1}{2}}\left \| k_{1}^{-\frac{1}{2}}f \right \|+c_{3}\left ( p+1 \right )^{\frac{d}{2}}\sqrt{M_{2}\left(1+M\right)}\left \| u_{\text{gl}} \right \|_b,
\end{equation}
where $u_{\text{gl}}$ is the multiscale solution using correspongding global basis in (\ref{equ:global}). \\
Moreover, if $p=O\left(\log \left(\frac{\lambda _{\max}\left ( C\left ( x \right ) \right )}{H}\right)\right)$ and $\chi_{i}$ are bilinear partition of unity, then there exists a constant $c_4$ such that 
\begin{equation}
\left \| u-u_{\text{ms}}^{\text{off}} \right \|_{\text{DG}} \leq c_{4}c_{1}^{-\frac{1}{2}}H\left \| k_{2}^{-\frac{1}{2}}f \right \| .
\end{equation}
\end{theorem}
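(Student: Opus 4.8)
The plan is to follow the scheme of Theorem~\ref{thm:thm1}, substituting the relaxed basis of Section~\ref{sec:relax} and replacing Lemma~\ref{lemma:dgcoupling} by Lemma~\ref{lemma4.5}. By Lemma~\ref{lem:coer} the form $a_{\text{DG}}$ is coercive and continuous on $V_h$, so, just as in the proof of Theorem~\ref{thm:thm1}, Galerkin orthogonality gives $\|u-u_{\text{ms}}^{\text{off}}\|_{\text{DG}}\le\|u-v\|_{\text{DG}}$ for every $v\in V_H$. Writing the relaxed global solution of (\ref{equ:global}) as $u_{\text{gl}}=\sum_{j=1}^{N}\sum_{i=1}^{G_j}d_{ij}\phi_{i,\text{glo}}^{j}$ and taking the competitor $v=\sum_{j=1}^{N}\sum_{i=1}^{G_j}d_{ij}\phi_{i,\text{ms}}^{j}\in V_H$, the triangle inequality gives
\[
\|u-u_{\text{ms}}^{\text{off}}\|_{\text{DG}}\le \|u-u_{\text{gl}}\|_{\text{DG}}+\Big\|\sum_{j=1}^{N}\sum_{i=1}^{G_j}d_{ij}\big(\phi_{i,\text{ms}}^{j}-\phi_{i,\text{glo}}^{j}\big)\Big\|_{\text{DG}},
\]
so the two terms can be treated separately.

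For the first term I would argue exactly as in Theorem~\ref{thm:thm1}: since $a_{\text{DG}}$ is symmetric ($\eta=1$) and $u_{\text{gl}}$ is the $a_{\text{DG}}$-Galerkin projection of $u$ onto $V_{\text{cem}}$, one has $a_{\text{DG}}(u-u_{\text{gl}},u-u_{\text{gl}})=(f,u-u_{\text{gl}})\le \|k_1^{-1/2}f\|\,\|u-u_{\text{gl}}\|_b$, and the spectral gap in (\ref{abf}) yields $\|u-u_{\text{gl}}\|_b^2\le c_1^{-1}\|u-u_{\text{gl}}\|_{\text{DG}}^2$, hence $\|u-u_{\text{gl}}\|_{\text{DG}}\le c_3 c_1^{-1/2}\|k_1^{-1/2}f\|$. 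For the second term I would apply the stable-decomposition estimate of Lemma~\ref{lemma4.4} to $\vartheta=\sum_{j,i}d_{ij}(\phi_{i,\text{ms}}^{j}-\phi_{i,\text{glo}}^{j})$, bounding it by $c_3(p+1)^d\sum_{j}\|\sum_{i}d_{ij}(\phi_{i,\text{ms}}^{j}-\phi_{i,\text{glo}}^{j})\|_{\text{DG}}^2$, then use Lemma~\ref{lemma4.5} to bound each block-difference by $M_2$ times $\|\phi_{i,\text{glo}}^{j}\|_{\text{DG}}^2+\|\pi(\phi_{i,\text{glo}}^{j})\|_b^2$. The crux is then to bound this last quantity by $(1+M)\,\|\sum_i d_{ij}\psi_i^j\|_b^2$: inserting $\widetilde\phi_i^j$ from Lemma~\ref{lemmasupp} (with $\pi(\widetilde\phi_i^j)=\psi_i^j$ and $\|\widetilde\phi_i^j\|_{\text{DG}}^2\le M\|\psi_i^j\|_b^2$) as a competitor in the relaxed minimization (\ref{basisrelexiob}) controls $\|\phi_{i,\text{glo}}^{j}\|_{\text{DG}}^2$ and the relaxation penalty by $M\|\psi_i^j\|_b^2$, after which $\|\pi(\phi_{i,\text{glo}}^{j})\|_b^2$ is controlled by $\|\psi_i^j\|_b^2$ plus that penalty; summing over $j$ and using the $b$-orthonormality of the auxiliary eigenfunctions gives $\sum_j\|\sum_i d_{ij}\psi_i^j\|_b^2\le \|u_{\text{gl}}\|_b^2$. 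Collecting everything produces the first displayed inequality of the theorem with the factor $\sqrt{M_2(1+M)}$.

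For the $O(H)$ statement I would, as in Theorem~\ref{thm:thm1}, bound $\|u_{\text{gl}}\|_b\le c_3\max\{k_1\}\,\|k_1^{-1/2}f\|$ by testing (\ref{equ:global}) against $u_{\text{gl}}$ and using $\|u_{\text{gl}}\|_{L^2}^2\lesssim \|u_{\text{gl}}\|_{\text{DG}}^2$, substitute this into the first bound, and then use $|\nabla\chi_i|=O(H^{-1})$ together with $k_1=\sum_i k_2|\nabla\chi_i|^2$ to replace $\|k_1^{-1/2}f\|$ by $O(H)\|k_2^{-1/2}f\|$ and $\max\{k_1\}$ by $O(H^{-2})\max\{k_2\}$. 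It then remains to verify that the prefactor $c_1^{1/2}\max\{k_1\}\,c_3(p+1)^{d/2}\sqrt{M_2(1+M)}$ is $O(1)$; since $M_2$ carries the exponentially small factor $\big(1+(2+\tfrac12 c_1^{-1/2})^{-1}\big)^{1-p}$, taking logarithms reduces this to
\[
\log(\max k_2)+\tfrac{d}{2}\log(p+1)+\tfrac12(1-p)\log\!\Big(1+\big(2+\tfrac12 c_1^{-1/2}\big)^{-1}\Big)+\log(H^{-2})=O(1),
\]
which holds once $p=O\big(\log(\lambda_{\max}(C(x))/H)\big)$; setting $c_4=c_3\big(1+c_1^{1/2}\max\{k_1\}\,c_3(p+1)^{d/2}\sqrt{M_2(1+M)}\big)$ completes the argument.

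The main obstacle is the second paragraph: unlike the constraint case, the relaxed global basis does not satisfy $\pi(\phi_{i,\text{glo}}^{j})=\psi_i^j$ exactly, so the clean identity $\|\pi(u_{\text{gl}})\|_b^2=\sum d_{ij}^2$ is no longer available and the relaxation penalty $b(\pi(\cdot)-\psi,\cdot-\psi)$ must be tracked throughout; the delicate point is to show that this penalty costs only the bounded factor $(1+M)$ and that, together with the exponential-decay constant $M_2$ of Lemma~\ref{lemma4.5}, all contrast- and $H$-dependence stays explicit enough for the logarithmic choice of $p$ to still yield linear convergence in $H$.
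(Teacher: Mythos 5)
Your overall architecture matches the paper's: Galerkin quasi-optimality plus the triangle inequality with the competitor $v=\sum_{j,i}d_{ij}\phi_{i,\text{ms}}^{j}$, the first term handled exactly as in Theorem~\ref{thm:thm1}, the second term via Lemma~\ref{lemma4.4} and Lemma~\ref{lemma4.5}, and the $O(H)$ statement obtained by the same $\|u_{\text{gl}}\|_b$ bound and the logarithmic choice of $p$ absorbing the exponentially small factor in $M_2$. However, there is a genuine gap at the one step that is actually new in the relaxed setting, and it is precisely the step you flag in your last paragraph without resolving. After Lemma~\ref{lemma4.4} and Lemma~\ref{lemma4.5} (plus your competitor argument with $\widetilde\phi_i^j$, which is fine) you are left needing
\begin{equation*}
\sum_{j=1}^{N}\Big\|\sum_{i=1}^{G_j} d_{ij}\psi_i^j\Big\|_b^2=\sum_{j,i}d_{ij}^2 \;\lesssim\; \left\|u_{\text{gl}}\right\|_b^2 ,
\end{equation*}
and you justify this by ``$b$-orthonormality of the auxiliary eigenfunctions.'' Orthonormality only gives the left equality; the inequality against $\|u_{\text{gl}}\|_b$ requires $\pi(u_{\text{gl}})=\sum_{j,i}d_{ij}\psi_i^j$, which holds for the constrained basis but fails for the relaxed one, since $\pi(\phi_{i,\text{glo}}^{j})\neq\psi_i^j$. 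The coefficients of $\pi(u_{\text{gl}})$ in the orthonormal auxiliary basis are related to the $d_{ij}$ through the Gram-type matrix with entries $b\bigl(\pi(\phi_{i,\text{glo}}^{j}),\psi_p^l\bigr)+a_{\text{DG}}\bigl(\phi_{i,\text{glo}}^{j},\psi_p^l\bigr)$, and without a lower bound on that map you cannot pass from $\|\pi(u_{\text{gl}})\|_b$ to the coefficient norm.

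The paper closes exactly this gap with a separate duality argument, and this is where its factor $(1+M)$ actually originates (not from a per-block competitor bound as in your sketch). Writing $\psi=\sum_{j,i}d_{ij}\psi_i^j$ and $\phi=\sum_{j,i}d_{ij}\phi_{i,\text{glo}}^{j}$, the global relaxed variational equation (\ref{problemrelaxiob}) gives $a_{\text{DG}}(\phi,v)+b(\pi\phi,\pi v)=b(\psi,\pi v)$ for all $v\in V_h$; choosing $v=y$ from Lemma~\ref{lemmasupp} with $\pi y=\psi$ and $\|y\|_{\text{DG}}^2\le M\|\psi\|_b^2$ yields
\begin{equation*}
\sum_{j,i}d_{ij}^2=b(\psi,\psi)=a_{\text{DG}}(\phi,y)+b(\pi\phi,\psi)\le \sqrt{1+M}\,\|\psi\|_b\left(\|\phi\|_{\text{DG}}^2+\|\pi\phi\|_b^2\right)^{\frac12},
\end{equation*}
which, combined with the minimization property of $\phi$ and $\|\pi(u_{\text{gl}})\|_b\le\|u_{\text{gl}}\|_b$, gives $\sum_{j,i}d_{ij}^2\le(1+M)\|u_{\text{gl}}\|_b^2$ and hence the stated factor $\sqrt{M_2(1+M)}$. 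Your proposal correctly identifies this as ``the delicate point'' but only asserts the conclusion; supplying this test-function/coefficient-norm argument (or an equivalent lower bound on the relaxed Gram map) is required before the first displayed inequality of the theorem, after which your treatment of the $O(H)$ part goes through as written.
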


\begin{proof}
Define $u_{\text{gl}}=\sum_{j=1}^{N}\sum_{i=1}^{G_{j}}d_{ij}\phi _{i,\text{glo}}^{j}$ and $v=\sum_{j=1}^{N}\sum_{i=1}^{G_{j}}d_{ij} \phi _{i,\text{ms}}^{j}$.
So, by the triangular inequality, Lemma \autoref{lemma4.4} and Lemma \autoref{lemma4.5}, we can get
\begin{equation}
    \begin{aligned}
\left \| u-u_{\text{ms}}^{\text{off}} \right \|_{\text{DG}}^{2}
&\leq \left \| u-u_{\text{gl}} \right \|_{\text{DG}}^{2}+ \left \| u_{\text{gl}} -v\right \|_{\text{DG}}^{2}\\
&\leq \left\|u-u_{\text{gl}}\right\|_{\text{DG}}^{2}+\left \| \sum_{j=1}^{N}\sum_{i=1}^{l_{i}}d_{ij}\left ( \phi _{i,\text{ms}}^{j} - \phi _{i,\text{glo}}^{j}\right ) \right \|_{\text{DG}}^{2}\\
&\leq \left\|u-u_{\text{gl}}\right\|_{\text{DG}}^2+c_{3}\left ( p+1 \right )^{d}\sum_{j=1}^{N}\left \|\sum_{i=1}^{l_{i}}d_{ij}\left ( \phi _{i,\text{ms}}^{j} - \phi _{i,\text{glo}}^{j}\right ) \right \|_{\text{DG}}^{2}\\
&\leq \left\|u-u_{\text{gl}}\right\|_{\text{DG}}^2+c_{3}\left ( p+1 \right )^{d}M_{2}\sum_{j=1}^{N}\sum_{i=1}^{l_{i}}\left \|d_{ij}\psi _{i}^{j} \right \|_{b}^{2}\\
&\leq\left\|u-u_{\text{gl}}\right\|_{\text{DG}}^2+ c_{3}\left ( p+1 \right )^{d}M_{2}\sum_{j=1}^{N}\sum_{i=1}^{l_{i}}\left(d_{ij}\right)^{2}.
    \end{aligned}
\end{equation}
By the definition of the global multiscale basis functions and the projection $\pi$, we have
$$\pi\left( u_{\text{gl}}\right)=\sum_{j=1}^{N}\sum_{i=1}^{l_{i}}d_{ij}\pi \left(\phi _{i,\text{glo}}^{j}\right).$$
Since $\sum_{j=1}^{N}\sum_{i=1}^{l_{i}}d_{ij}a_{\text{DG}}\left ( \phi _{i,\text{ms}}^{j},\psi _{p}^{l} \right )=0, $ we obtain
\begin{equation*}
    \begin{aligned}
    b\left ( \pi \left(u_{\text{gl}}\right),\psi _{p}^{l} \right )
    &=\sum_{j=1}^{N}\sum_{i=1}^{l_{i}}d_{ij}b\left ( \pi\left( \phi _{i,\text{ms}}^{j}\right),\psi _{p}^{l} \right )\\
    &=\sum_{j=1}^{N}\sum_{i=1}^{l_{i}}d_{ij}\left ( b\left ( \pi \left ( \phi _{i,\text{ms}}^{j}\right ),\pi \left ( \psi _{p}^{l} \right )  \right )+a_{\text{DG}}\left ( \phi _{i,\text{ms}}^{j},\psi _{p}^{l} \right ) \right ).
    \end{aligned}
\end{equation*}
To simplify our inequalities,  let $b\left ( \pi\left( u_{\text{gl}}\right),\psi _{p}^{l} \right )$ compose a matrix $B$, $\left ( b\left ( \pi\left ( \phi _{i,\text{ms}}^{j} \right )\pi\left ( \psi _{p}^{l} \right ) \right )+a_{\text{DG}}\left ( \phi _{i, \text{ms}}^{j}, \psi _{p}^{l} \right ) \right )$ compose a matrix $A$, and $\left(d_{ij}\right)$ compose a matrix $D$. Then we have
$$\left \| D \right \|_{2}\left \| A \right \|_{2}\leq \left \| B \right \|_{2},$$ that is 
$$\left \| D \right \|_{2}\leq \left \| A^{-1} \right \|_{2}\left \| B \right \|_{2}.$$
Then by the definition of the space $V_h$ and based on the method we used in this paper, for $\psi  =\sum_{j=1}^{N}\sum_{i=1}^{l_{i}}d_{ij}\psi _{i}^{j}\in W_{\text{aux}},$ there is $\phi =\sum_{j=1}^{N}\sum_{i=1}^{l_{i}}d_{ij}\phi  _{i,\text{glo}}^{j}$ such that 
\begin{equation}
a_{\text{DG}}\left ( \phi ,v \right )+b\left ( \pi\phi ,\pi v \right )=b\left ( \psi ,\pi v \right ),\ \forall\ v\in V_h.\label{equ:85}
\end{equation}
Using the given $\psi\in W_{\text{aux}}$, there is $y\in V_h$ such that 
\begin{equation*}
\pi y=\psi, \left \| y \right \|_{\text{DG}}^{2}\leq M\left \| \phi  \right \|_{b}^{2}. 
\end{equation*}
Let $v=y$ in (\ref{equ:85}), we can similarly note that
$$a_{\text{DG}}\left ( \phi ,y \right )+b\left ( \pi\phi ,\pi y \right )=b\left ( \psi ,\pi y \right ).$$
Notice that $b\left ( \psi ,\pi y \right )=b\left ( \psi, \psi \right )=\left \| D \right \|_{2}^{2},$
hence we obtain
\begin{equation*}
    \begin{aligned}
    \left \| D \right \|_{2}^{2}&=a_{\text{DG}}\left ( \phi ,y \right )+b\left ( \pi \phi ,\psi  \right )\\
    &\leq \left \| \phi  \right \|_{\text{DG}}\left \| y \right \|_{\text{DG}}+\left \| \pi \phi  \right \|_{\text{DG}}\left \| \psi  \right \|_{\text{DG}}\\
    &\leq \sqrt{1+M}\left \| \psi  \right \|_{b}\left ( \left \| \phi  \right \|_{\text{DG}}^{2}+\left \| \pi \phi  \right \|_{b}^{2} \right )^{\frac{1}{2}}\\
    &\leq \left ( 1+M \right )\left \| B \right \|_{2}^{2}\\
    & \leq \left ( 1+M \right )\left \| u_{\text{gl}} \right \|_{b}^{2}.
    \end{aligned}
\end{equation*}
Then we have
$$ \left \| u_{\text{gl}}-v \right \|_{\text{DG}}^{2}\leq c_{3}\left ( p+1 \right )^{d}M_{2}\left ( 1+M \right )\left \| u_{\text{gl}} \right \|_{b}^{2}.$$
 The rest proof is similarly with \autoref{thm:thm1}. This completes the proof.
\end{proof}

We remark that \autoref{thm:2} gives a better constraint than  \autoref{thm:thm1}, because the convergence rate in \autoref{thm:2} is independent of the constant $M$. This is supported by the numerical data shown below.

\section{Numerical results}\label{section5}
  
In this section, we present numerical simulation results to show the performance of the proposed method. We consider a complicated high-contrast elastic model whose Young's modulus  $E\left(x\right)$ is shown in \autoref{Young1}, besides, we let 
$$\lambda\left(x\right)=\frac{v}{\left(1+v\right)\left(1-2 v\right)} E\left(x\right),\ \mu\left(x\right)=\frac{1}{2\left(1+v\right)} E\left(x\right). $$
The Poisson's ratio $v$ is 0.25. 
In all numerical tests, we use constant force and homogeneous Dirichlet boundary conditions. 
In our simulations, we divide the domain $D=[0,1]\times [0,1]$ into $15\times 15$ coarse grid blocks, together with in each coarse block we use $15\times 15$ fine-scale square blocks, which results in $225\times 225$ fine grid blocks. 
Then the degrees of freedom used to obtain the reference solution is $115,200$.
In order to evaluate the accuracy of the multiscale solution, following errors  are adopted:
$$e_{L^{2}}=\frac{\left\|(\lambda+2 \mu)\left(u_{\text{ms}}-u_{h}\right)\right\|_{L^{2}(D)}}{\left\|(\lambda+2 \mu) u_{h}\right\|_{L^{2}(D)}}, \quad e_{H^{1}}=\sqrt{\frac{a\left(u_{\text{ms}}-u_{h}, u_{\text{ms}}-u_{h}\right)}{a\left(u_{h}, u_{h}\right)}}.$$

For the convenience of presenting the simulation results,  we provide  some notations in  \autoref{ta:symbol}.
\begin{table}[H]
	
\setlength{\abovecaptionskip}{0pt}
\setlength{\belowcaptionskip}{10pt}
\centering
\topcaption{\label{ta:symbol}Simplified description of symbols.}
\label{ta:symbol}
\begin{tabular}{l|llllll}
\cline{1-2}
Parameters                                        & Symbols &  &  &  &  &  \\ \cline{1-2}
Number of oversampling coarse element                & $Noc$      &  &  &  &  &  \\
Number of basis functions in each coarse element & $Nbf $    &  &  &  &  &  \\
Size of each coarse element                & $H $      &  &  &  &  &  \\
Contrast value                                    & $E_{1}$      &  &  &  &  &  \\ \cline{1-2}
\end{tabular}
\end{table}

In the standard test, the number of basis function and oversampled element are 6 and 5, respectively, the contrast of the model is $10^4$. The FEM solution and the relaxed CEM-GMsFEM solution with standard parameter settings are shown in \autoref{refsol} and \autoref{relaxedsol}. We can see there is obvious distortion in the reference solution due to the strong heterogeneity of the media, however, the 
coarse-grid solution still succeed in capturing those small details. Actually, it is hard to find any differences between the multiscale solution and the reference solution.
\begin{figure}[H] 
\centering 
\includegraphics[width=0.7\textwidth]{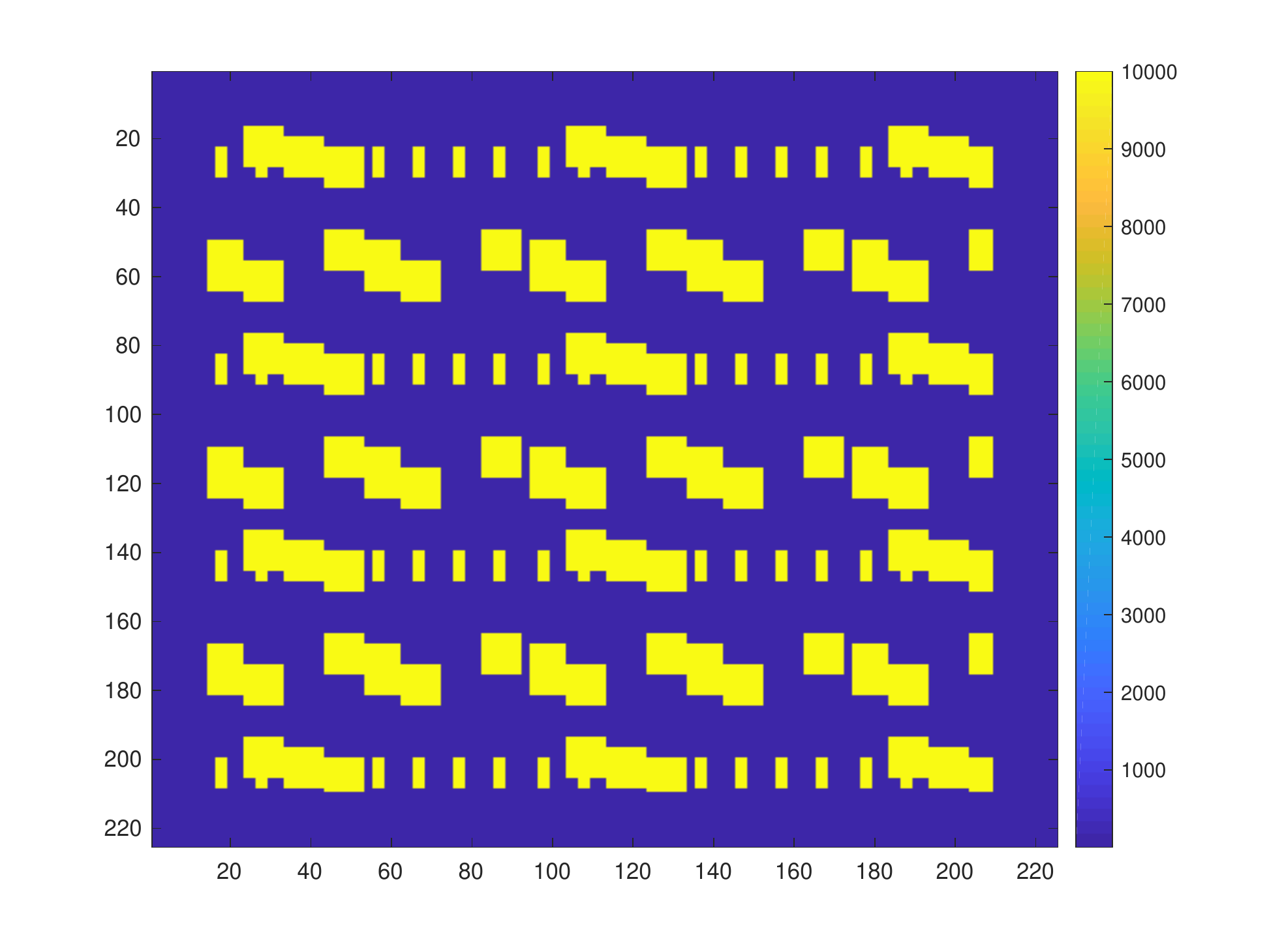} 
\caption{Young's Modulus of the test model.}
\label{Young1} 
\end{figure}

\begin{figure}[H] 
\centering{\includegraphics[width=16cm]{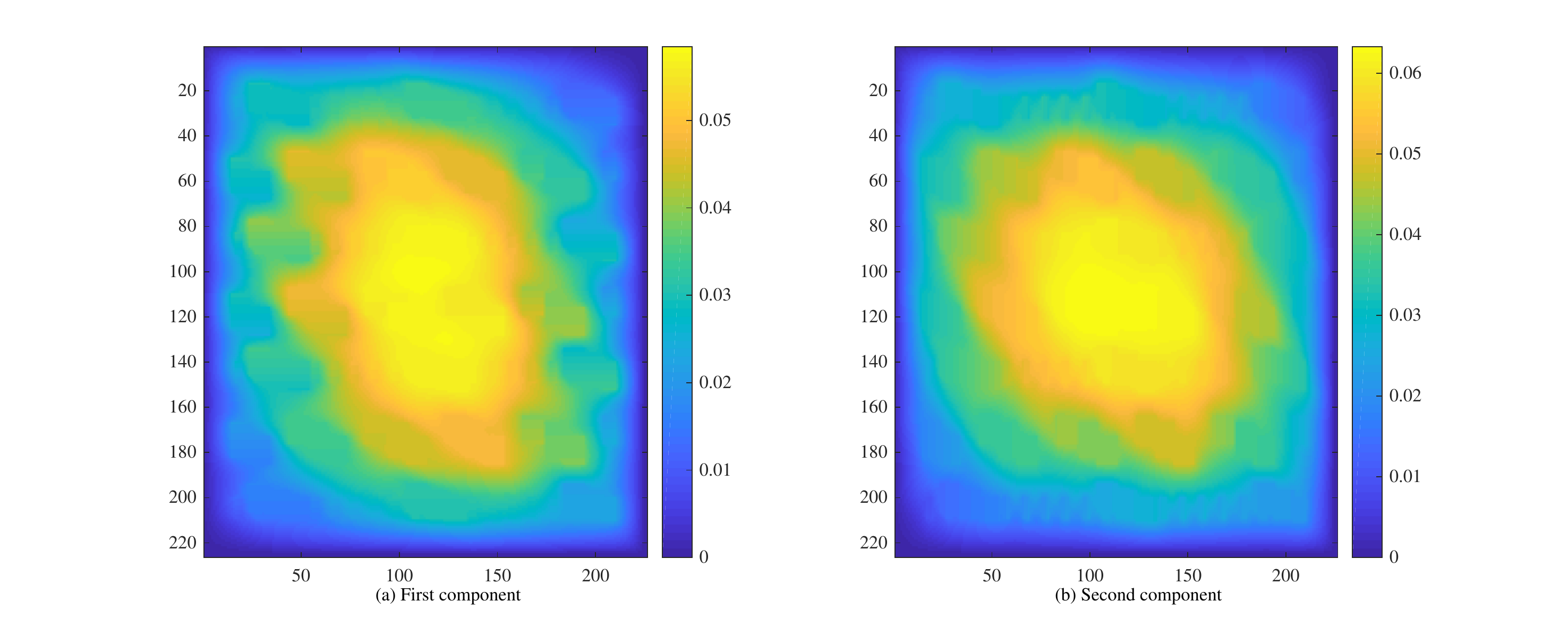} }
\caption{Reference solution.}
\label{refsol} 
\end{figure}

\begin{figure}[H] 
\centering{\includegraphics[width=16cm]{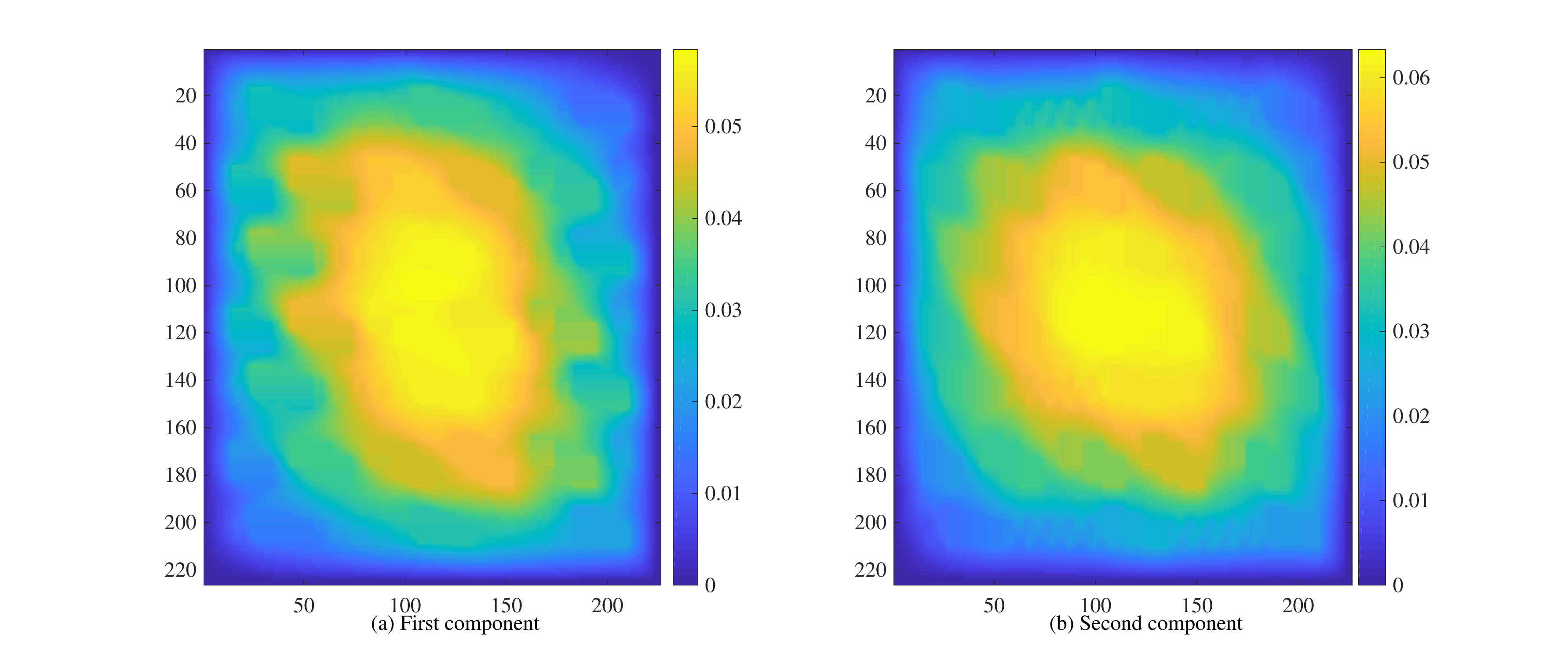} }
\caption{Relaxed CEM-GMsFEM solution with standard parameter setting.}
\label{relaxedsol} 
\end{figure}

To study the effects of different number of basis functions on the accuracy of the GMsFEM solution, we display the errors with respect to the number of basis functions in \autoref{table-Nbf} and \autoref{fig-Nbf}, we note that in these tests, we let $Noc=5,$  $H=1/60$.   
It is clear that more number of basis functions will definitely yield more accurate GMsFEM solutions. For example, we can observe that if only 1 basis was used in the Relaxed CEM-GMsFEM, the $H^1$ error is about 26\%, which decays to only 2\% 
if there are 8 bases. We note the $L^2$ error the Relaxed version is 0.217\% if $Nbf=8$, which is also much smaller than the case of $Nbf=1$.

\begin{figure}[H] 
\centering 
\includegraphics[width=0.7\textwidth]{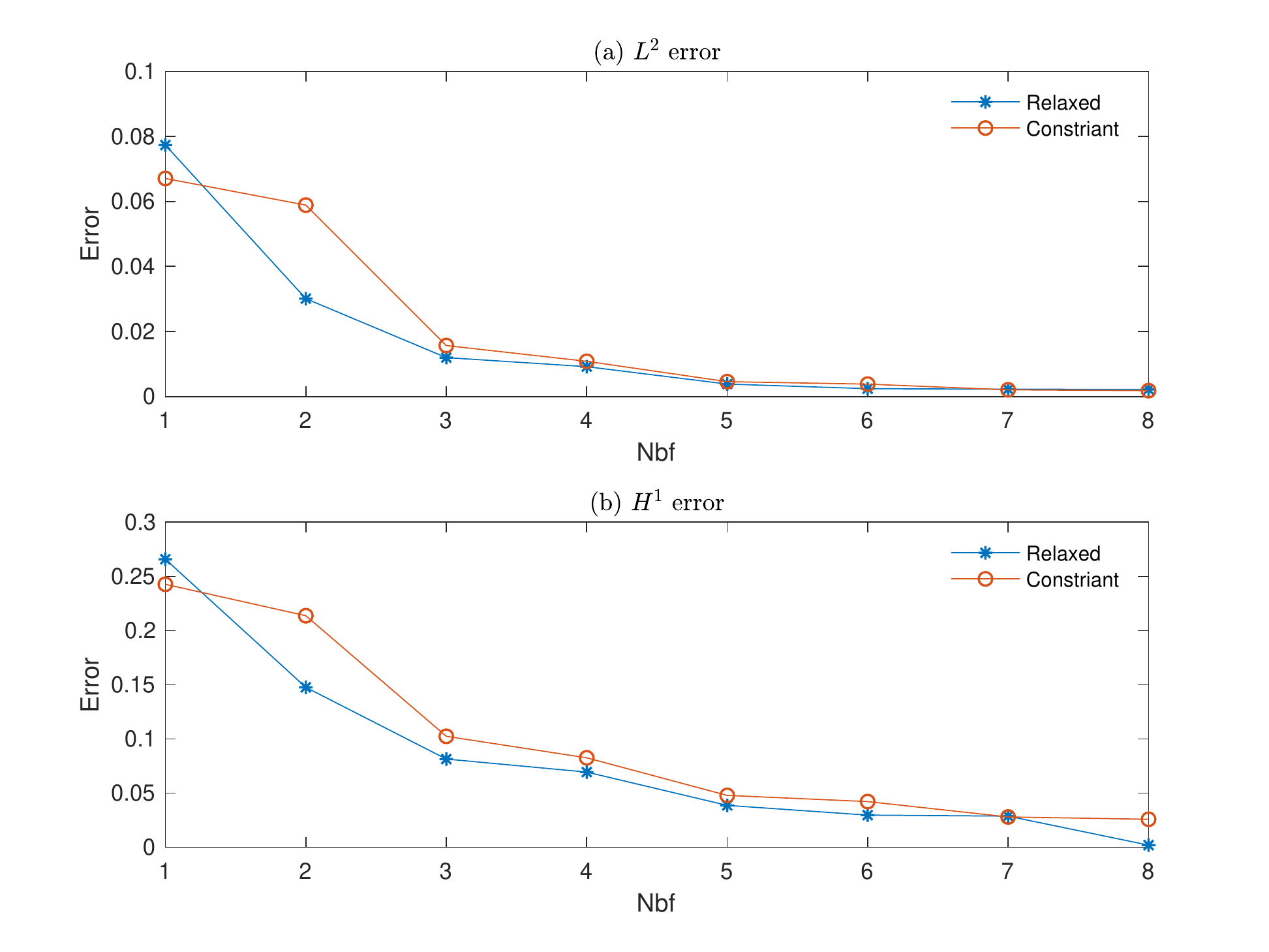} 
\caption{Comparison of the Relaxed and  Constrained CEM-GMsFEM with different $Nbf,$  $H=1/60,$  $Noc=5$.}
\label{fig-Nbf} 
\end{figure}

\begin{table}[H]
\centering
\topcaption{Comparison of the Relaxed and  Constrained CEM-GMsFEM with different $Nbf,$ $H=1/60,$ $Noc=5$.}
\begin{tabular}{cllccllcclll}
\cline{1-10}
\multicolumn{2}{c}{$Nbf$} &  & \multicolumn{3}{c}{$e_{L^{2}}$}             &  & \multicolumn{3}{c}{$e_{H^{1}}$}             &  &  \\ \cline{4-6} \cline{8-10}
\multicolumn{2}{c}{}                                 &  & Relaxed                     &                      &  Constraint  &  & Relaxed                     &                      &  Constraint  &  &  \\ \cline{1-10}
\multicolumn{2}{c}{1}                                &  & 7.734\%                     &                      & 6.706\% &  & 26.586\%                    &                      &  24.275\% &  &  \\
\multicolumn{2}{c}{2}                                &  & 3.009\%                     &                      & 5.886\% &  & 14.764\%                    &                      & 21.373\% &  &  \\
\multicolumn{2}{c}{3}                                &  & 1.196\%                     &                      & 1.567\% &  &  \multicolumn{1}{l}{8.147\%}                      &                      &  10.247\% &  &  \\
\multicolumn{2}{c}{4}                                &  & 0.917\%                     &                      & 1.082\% &  &  \multicolumn{1}{l}{6.945\%}                      &                      & 8.261\% &  &  \\
\multicolumn{2}{c}{5}                                &  & \multicolumn{1}{l}{0.382\%} & \multicolumn{1}{l}{} &  0.456\% &  & \multicolumn{1}{l}{3.872\%} & \multicolumn{1}{l}{} & 4.792\% &  &  \\
\multicolumn{2}{c}{6}                                &  & \multicolumn{1}{l}{0.240\%} & \multicolumn{1}{l}{} &  0.412\% &  & \multicolumn{1}{l}{2.973\%} & \multicolumn{1}{l}{} & 3.282\% &  &  \\
\multicolumn{2}{c}{7}                                &  & \multicolumn{1}{l}{0.224\%} & \multicolumn{1}{l}{} & 0.205\% &  & \multicolumn{1}{l}{2.880\%} & \multicolumn{1}{l}{} & 2.804\% &  &  \\
\multicolumn{2}{c}{8}                                &  & \multicolumn{1}{l}{0.217\%} & \multicolumn{1}{l}{} & 0.179\% &  & \multicolumn{1}{l}{2.050\%}  & \multicolumn{1}{l}{} &  2.589\% &  &  \\ \cline{1-10}
\end{tabular}\label{table-Nbf}
\end{table}

It is also important  to investigate the influence of  oversampling size. To this end, we compare the error as a function of  $Noc$  in \autoref{table-Noc} and \autoref{fig-Noc} with fixed $Nbf=4$ and $\ H=1/15$. 
It can be found there is sharp decay of the error once $Noc$ reaches 3 for the 
Relaxed scheme and 5 for the Constraint scheme. Once $Noc$ is larger than a 
threshold, accuracy improvements are no longer significant.

\begin{figure}[H] 
\centering 
\includegraphics[width=0.7\textwidth]{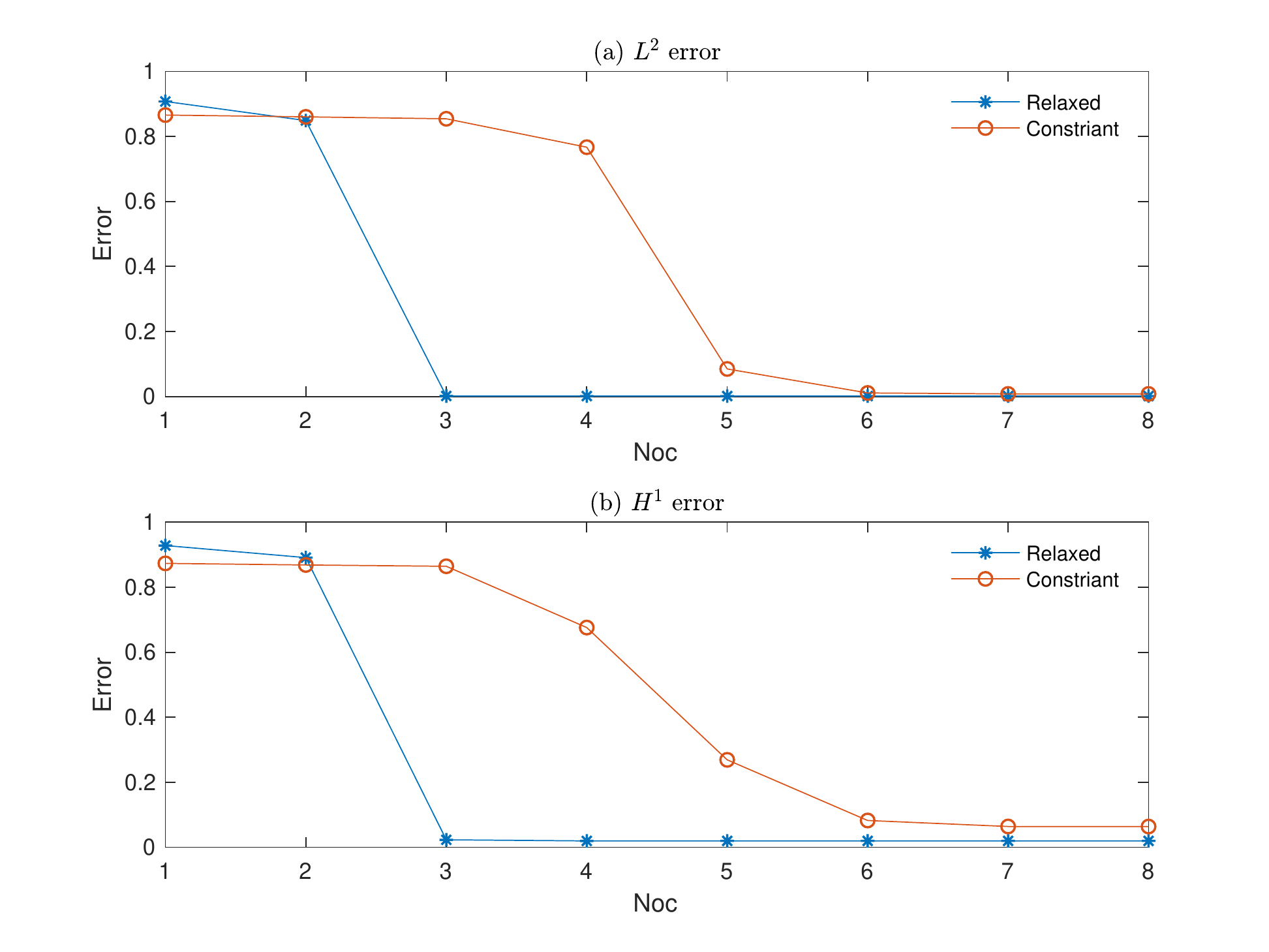} 
\caption{Comparison of the Relaxed and Constrained CEM-GMsFEM with different $Noc,$ $H=1/15,$ $Nbf=4$.}
\label{fig-Noc} 
\end{figure}

\begin{table}[H]
\centering
\topcaption{Comparison of the Relaxed and Constrained CEM-GMsFEM with different $Noc,$ $H=1/15,$ $Nbf=4$.}
\begin{tabular}{cllccllcclll}
\cline{1-10}
\multicolumn{2}{c}{$Noc$} &  & \multicolumn{3}{c}{$e_{L^{2}}$}             &  & \multicolumn{3}{c}{$e_{H^{1}}$}             &  &  \\ \cline{4-6} \cline{8-10}
\multicolumn{2}{c}{}                     &  & Relaxed                     &                      & Constraint  &  & Relaxed                     &                      & Constraint  &  &  \\ \cline{1-10}
\multicolumn{2}{c}{1}                    &  & 90.743\%                    &                      &  86.559\% &  & 92.829\%                    &                      & 87.341\% &  &  \\
\multicolumn{2}{c}{2}                    &  & 84.869\%                    &                      & 86.006\%  &  & 89.114\%                    &                      & 86.850\% &  &  \\
\multicolumn{2}{c}{3}                    &  & \multicolumn{1}{l}{0.151\%}                    &                      & 85.412\% &  &  \multicolumn{1}{l}{2.322\%}                     &                      & 86.449\% &  &  \\
\multicolumn{2}{c}{4}                    &  & \multicolumn{1}{l}{0.127\%}                    &                      &  76.676\% &  &  \multicolumn{1}{l}{1.974\%}                     &                      & 67.605\% &  &  \\
\multicolumn{2}{c}{5}                    &  & \multicolumn{1}{l}{0.127\%} & \multicolumn{1}{l}{} &   8.470\% &  & \multicolumn{1}{l}{1.978\%} & \multicolumn{1}{l}{} &  26.915\% &  &  \\
\multicolumn{2}{c}{6}                    &  & \multicolumn{1}{l}{0.127\%} & \multicolumn{1}{l}{} &   1.082\% &  & \multicolumn{1}{l}{1.978\%} & \multicolumn{1}{l}{} & 8.261\% &  &  \\
\multicolumn{2}{c}{7}                    &  & \multicolumn{1}{l}{0.127\%} & \multicolumn{1}{l}{} &   0.787\% &  & \multicolumn{1}{l}{1.978\%} & \multicolumn{1}{l}{} & 6.417\% &  &  \\
\multicolumn{2}{c}{8}                    &  & \multicolumn{1}{l}{0.127\%} & \multicolumn{1}{l}{} &   0.785\% &  & \multicolumn{1}{l}{1.978\%} & \multicolumn{1}{l}{} & 6.396\% &  &  \\ \cline{1-10}
\end{tabular}\label{table-Noc}
\end{table}

We also examine the robustness of the proposed method against the contrast of the model. We fix $H$ and $Nbf$ and vary the ratio of the maximum value of $E(x)$ over its minimum value and $Noc$. The simulation results are reported in \autoref{table-contrast}. We find that larger $Noc$ leads to more robust coarse-grid simulations, the Relaxed formulation is more robust than the Constraint version. 

\begin{table}[H]
\centering 
\topcaption{Comparison weighted errors of different $Noc$ and contrast values with $H=1/15$, $Nbf=6$. }
\begin{tabular}{cclllllll}
\cline{1-7}
$E_{1}$                       & Noc &  &  & \multicolumn{3}{c}{$e_{H^{1}}$}             &  &  \\ \cline{5-7}
                         &     &  &  & \multicolumn{1}{c}{Relaxed} & \multicolumn{1}{c}{} & Constraint &  &  \\ \cline{1-7}
                         & 5   &  &  & 3.872\%                     & \multicolumn{1}{c}{} &        14.792\% &  &  \\
$10^{2}    $                  & 6   &  &  & 2.973\%                     & \multicolumn{1}{c}{} &   4.223\% &  &  \\
                         & 7   &  &  & 2.880\%                     & \multicolumn{1}{c}{} &        2.804\% &  &  \\ \cline{1-7}
                         & 5   &  &  & 2.927\%                     & \multicolumn{1}{c}{} &        4.122\% &  &  \\
$10^{4}  $                   & 6   &  &  & 2.928\%                     &                      &  2.987\%  &  &  \\
                         & 7   &  &  & 2.928\%                     &                      &  2.967\%  &  &  \\ \cline{1-7}
                         & 5   &  &  & 2.783\%                     &                      &   89.107\%  &  &  \\
$10^{6}  $                    & 6   &  &  & 2.783\%                     &                      &  50.699\%   &  &  \\
\multicolumn{1}{l}{}     & 7   &  &  & 0.211\%                     &                      &  7.470\%  &  &  \\ \cline{1-7}
\multicolumn{1}{l}{}     & 5   &  &  & 3.295\%                     &                      &   95.676\%  &  &  \\
\multicolumn{1}{l}{$10^{8}$}  & 6   &  &  & 3.197\%                     &                      &  85.561\%   &  &  \\
\multicolumn{1}{l}{}     & 7   &  &  & 3.155\%                     &                      &   47.624\%  &  &  \\ \cline{1-7}
\end{tabular}\label{table-contrast}
\end{table}

Finally, we study the convergence behavior of the CEM-GMsFEM solution with respect to the coarse-grid size $H$. We set the number of oversampling layers be $Noc\approx 4\lfloor\log \left(H\right) / \log \left(1/7.5\right)\rfloor$ and $Nbf=4$ to form the basis spaces. The  convergence results are presented in \autoref{table-L}. In \autoref{loghfinal}, we can see  the $H^1$ error of  CEM-GMsFEM solutions almost linearly converge against the coarse-mesh size $H$ as predicted by the analysis.
\begin{figure}[H] 
\centering 
\includegraphics[width=0.7\textwidth]{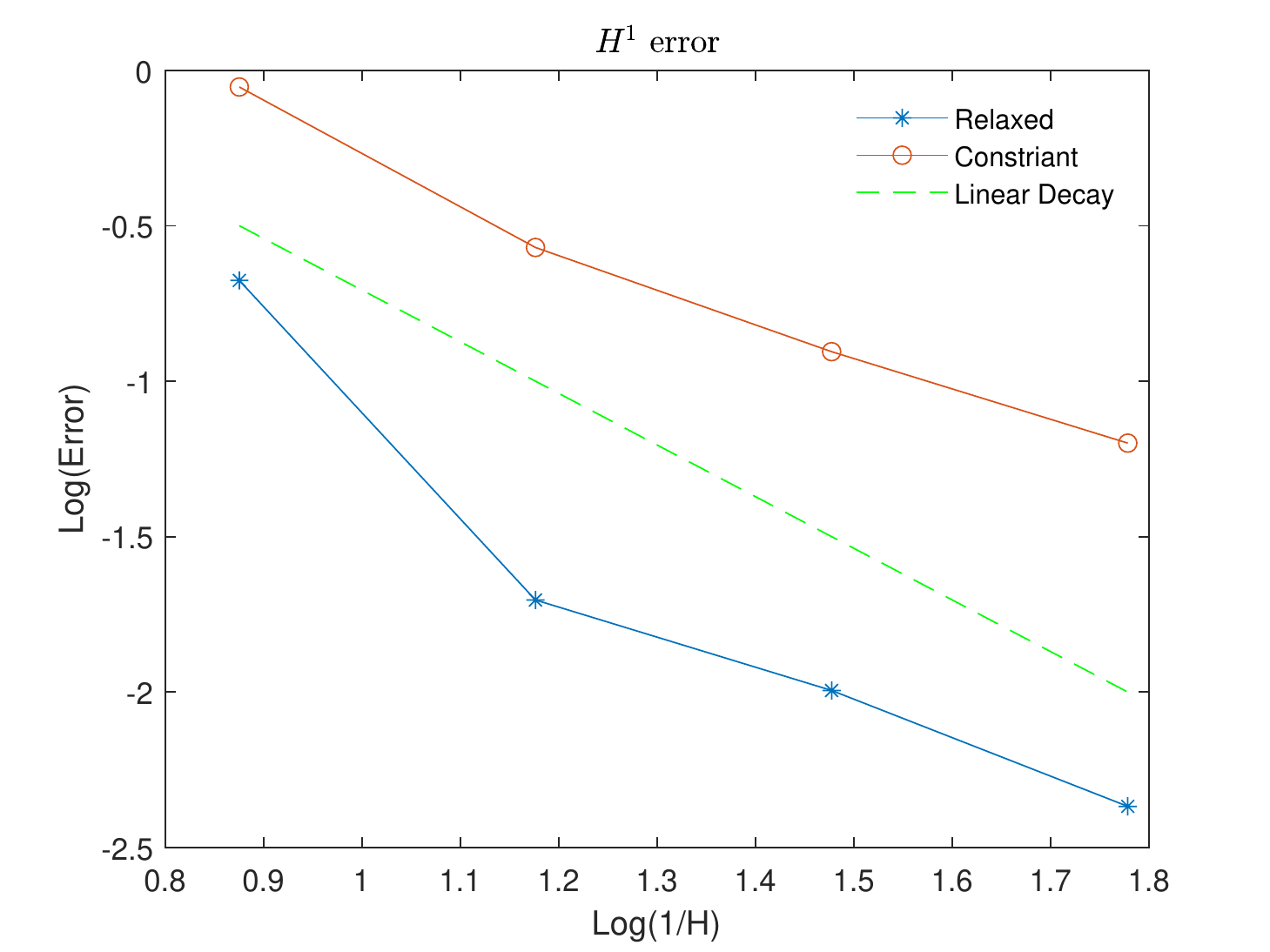} 
\caption{Comparison of the Relaxed and Constrained CEM-GMsFEM with different $Noc$ and $H,$  $Nbf=4$.}
\label{loghfinal} 
\end{figure}

\begin{table}[H]
\centering
\topcaption{Comparison of the Relaxed and Constrained CEM-GMsFEM with different $Noc$ and $H,$  $Nbf=4$.}
\begin{tabular}{cclccllcclll}
\cline{1-10}
\multicolumn{2}{c}{Dimension}      &  & \multicolumn{3}{c}{$e_{L^{2}}$} &   & 
\multicolumn{3}{c}{$e_{H^{1}}$} &  &  \\ \cline{1-2} \cline{4-6} \cline{8-10}
$Noc$ & $H$ &  & Relaxed           &           &  Constraint           &  & Relaxed           &           &  Constraint          &  &  \\ \cline{1-10}
4               & 1/7.5            &  & 4.961\%              &&  17.233\% &  & 21.008\%    && 88.341\% &  &  \\
5               & 1/15             &  & 0.127\%             && 8.470\%  &  & 1.978\%    && 26.915\% &  &  \\
6               & 1/30             &  & 0.071\%               && 3.284\% &  & 1.012\%        && 12.441\% &  &  \\
7               & 1/60             &  & 0.042\%               &&  1.001\% &  & 0.429\%       && 6.320\% &  &  \\          \cline{1-10}
\end{tabular}\label{table-L}
\end{table}

In summary, the relaxed CEM-GMsFEM is more accurate and more robust than the 	Constraint CEM-GMsFEM. Besides, more basis functions and larger local problem size lead to more accurate coarse-grid solutions.



\section{Conclusion}\label{section6}
In this paper, we introduce the CEM-GMsFEM with the discontinuous Galerkin form, which is a local multiscale model reduction method in the discontinuous Galerkin framework and can be used to solve the elasticity problem of high contrast. Firstly, we use the main mode from the local spectrum problem to construct a multiscale basis function to locally represent the solution space in each coarse block. Then, the multiscale basis function in the coarse oversampling region is solved by constraining the energy minimization problem. After proving the convergence of the CEM-GMsFEM and the Relaxed CEM-GMsFEM in the DG form, we also give the numerical results of a simple elasticity equation to further illustrate the method's accuracy.
\section*{Acknowledgments}

The research of Eric Chung is partially supported by the Hong Kong RGC General Research Fund (Project numbers 14304719 and 14302620) and CUHK Faculty of Science Direct Grant 2021-22.

\small

\bibliography{p1}

\end{document}